

\documentclass[twoside,a4paper, 10pt]{amsart} 




\usepackage{amsmath, amsthm, amssymb, amsxtra,amsfonts, color,calligra,mathrsfs,comment,url, accents} 
\usepackage{enumerate,verbatim,mathrsfs,comment}
\usepackage[all,2cell,ps]{xy}
\usepackage{mathtools} 
\usepackage{extpfeil}   
\usepackage{soul}  
\usepackage{tikz,tikz-cd,enumerate}
\usetikzlibrary{matrix,arrows,decorations.pathmorphing}
\usepackage{emptypage} 
\usepackage{floatrow} 
\usepackage{lineno} 
\usepackage{float}
\usepackage{xcolor} 
\usepackage{graphicx}
\usepackage{tikz,tikz-cd,tkz-graph,enumerate}
\usepackage{tabularx}
\usetikzlibrary{positioning}
\usepackage[pagebackref]{hyperref}





   \usepackage{setspace}
   
   \usepackage[a4paper, total={6in, 8in}]{geometry}
\pagestyle{myheadings} 		


\DeclareMathOperator{\Hom}{Hom}

\DeclareMathOperator{\rank}{rank}
\DeclareMathOperator{\conv}{conv}
\DeclareMathOperator{\diag}{diag}

\DeclareMathOperator{\Max}{Max}   

\DeclareMathOperator{\intt}{int}


\newcommand{\Um}{\mbox{Um}}

\newcommand{\GL}{\mbox{GL}}
\newcommand{\KK}{\mbox{$K_1$}}

\newcommand{\El}{\mbox{E}}
\newcommand{\Sl}{\mbox{SL}}
\newcommand{\Ep}{\mbox{ESp}}

\DeclareMathOperator{\Etrans}{ETrans}
\DeclareMathOperator{\Sp}{Sp}

\newcommand{\m}{\mathfrak m}

\newcommand{\sm}{\mbox{$M_*$}}

\newcommand{\pz}[1]{{\mathbb{Z}}_{+}^{#1}}
\newcommand{\nz}[1]{{\mathbb{Z}}^{#1}}

\newcommand{\lrinn}[2]{\langle {#1}\,, {#2} \rangle}
\newcommand{\Lrinn}[2]{\Big\langle {#1}\,, {#2} \Big\rangle}

\def\ESp{\rm ESp}

\def\Sp{\rm Sp}
\def\vp{\rm \vspace{0.2cm}}
\def\Um{\rm Um}
\def\I{\rm Id}
\def\KK{\rm K}
\def\Sll{\rm SL}
\def\ETranss{\rm ETrans}



\theoremstyle{plain}
\newtheorem{theorem}{Theorem}[section]

\newtheorem{thm}{Theorem}
\newtheorem{proposition}[theorem]{Proposition}
\newtheorem{corollary}[theorem]{Corollary}
\newtheorem{lemma}[theorem]{Lemma}

\theoremstyle{definition}
\newtheorem{definition}[theorem]{Definition}

\newtheorem{example}[theorem]{Example}

\theoremstyle{remark}
\newtheorem{remark}[theorem]{Remark}
\numberwithin{equation}{section}
\AfterEndEnvironment{figure}{\noindent\ignorespaces} 


\begin{document}
	
		\title{${\KK}_1$-Stability of symplectic modules over monoid algebras}
	
	\author{Rabeya Basu}
\address{Indian Institute of
	Science Education and Research (IISER) Pune,  India} 
\email{rabeya.basu@gmail.com, rbasu@iiserpune.ac.in}
\thanks{$^*$Corresponding Author}

\author{Maria A. Mathew*
}
\address{Indian Institute of
	Science Education and Research (IISER) Pune,  India} 
\email{maria.mathew@acads.iiserpune.ac.in}
\thanks{Research by the second author was supported by the NBHM post-doctoral research grant.}

	\date{}
	\maketitle

	\noindent
	{\small Abstract: Let $R$ be a regular ring of dimension $d$ and $L$ be a $c$-divisible monoid. If ${\KK}_1{\Sp}(R)$ is trivial and $k \geq d+2,$ then we prove that the symplectic group ${\Sp}_{2k}(R[L])$ is generated by elementary symplectic matrices over $R[L]$. When $d \leq 1$ or $R$ is a geometrically regular ring containing a field, then improved bounds have been established. We also discuss the linear case, extending the work of \cite{Gubeladze-ClassicalMR1079964}. } \vp

	\noindent{\small {\it MSC 2020:
			11E57,
			11E70,
			13-02,
			15A63,
			19A13,
			19B14,
			20M25}}\vp

	\noindent{\small {\it Key words: ${\KK}_1$, ${\KK}_1{\Sp}$, stability, monoid algebra, symplectic,}} \vp

	\section{Introduction}\label{intro}
	The enquiry into ${\KK}_1$-stability of rings is a natural consequence of the results that have been obtained for its well known counterpart ${\KK}_0$. 
	To be precise, when $A=k[X_1, \ldots, X_n],$ where $k$ is a field, the affirmative to Serre's problem by Quillen-Suslin, gave the best non-stable version for ${\KK}_0(A)$. This led to questions on the possibility of the best stable and non-stable version for the classical groups ${\KK}_1(A)$ and ${\KK}_1{\Sp}(A)$. In \cite{Suslin-MR0472792} and \cite{Kop-MR497932}, Suslin and Kopeiko established these versions by giving stabilization theorems for ${\KK}_1(R[X_1, \ldots, X_m, Y_1^{\pm{1}}, \ldots, Y_{n}^{\pm{1}}]),$ where $R$ is a Noetherian ring.
	
	The subsequent development of $K$-theoretic techniques leading upto the proof of Serre's problem inspired similar developments for monoid algebras, the starting point being Anderson's conjecture \cite{MR526663-Anderson} (proved by J. Gubeladze in \cite{Gubeladze2-maximalMR937805}). A setback to the expectation of ${\KK}_1$-analogues for monoid algebras, was due to an example by Srinivas \cite{Srinivasc-ceg-MR0918839}. Building on Srinivas's work, Gubeladze \cite{Gubeladze-NontrivMR1360174} showed that for any regular ring $R,$ the expected ${\KK}_1$ analogue for the monoid algebra $R[U, UV, UV^2]$ cannot be attained. This example also works to show the same for the symplectic group  ${\Sp}(R[U, UV, UV^2])$.

	Despite the counterexamples, Gubeladze \cite{Gubeladze-ClassicalMR1079964} found a class of monoids algebras that satisfy the expected ${\KK}_1$-analogues. It was shown, when $R$ is a Euclidean domain and $L$ a $c$-divisible algebra ($c>1$) (for definition see Section \ref{s2}), one gets ${\Sl}_{n}(R[L]) = {\El}_{n}(R[L])$ for $n \geq 3$. These $c$-divisible monoids arise naturally in tandem with any finite abelian group. Given any finite abelian group $G,$ one can show there exists a $c$-divisible  monoid $L$ ($c>1$) such that $G \simeq Cl(L),$ where $Cl(L)$ denotes the divisor class group of $L$ (see \cite{Chouinard-Div-MR0645239} for definition). Motivated by these results, we identify a class of rings $\mathcal{R}_t$ (see Section \ref{s3}) in the symplectic setup and prove:
	
	\begin{thm}\label{t1}
		Let  $R \in \mathcal{R}_k$ and $L$ be a c-divisible monoid, where $c > 1$ and $k \geq 1$. Further, if we have ${\Sp}_{2k}(R[X_1, \ldots, X_m, Y_1^{\pm{1}}, \ldots, Y_n^{\pm{1}}])= {\ESp}_{2k}R[X_1, \ldots, X_m, Y_1^{\pm{1}}, \ldots, Y_n^{\pm{1}}],$ then
		$${\ESp}_{2k}(R[L]) = {\Sp}_{2k}(R[L]). $$
	\end{thm}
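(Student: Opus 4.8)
The plan is to reduce the problem over the monoid algebra $R[L]$ to a problem over a Laurent polynomial extension of $R$, where the hypothesis applies. The key structural fact is that a $c$-divisible monoid $L$ is a filtered union of its finitely generated submonoids, and each such submonoid embeds (after passing through the divisibility maps $\ell \mapsto c\ell$) into a free monoid; more precisely, for any finitely generated submonoid $M \subseteq L$ one can find $M \subseteq M' \subseteq L$ with $M'$ isomorphic to a submonoid of $\mathbb{Z}_+^m \times (\mathbb{Z}^n)$ sitting cofinally, and the $c$-divisibility lets one ``absorb'' the group of units and retract. First I would set up this colimit description carefully: write $\mathrm{Sp}_{2k}(R[L]) = \varinjlim \mathrm{Sp}_{2k}(R[M])$ and $\mathrm{ESp}_{2k}(R[L]) = \varinjlim \mathrm{ESp}_{2k}(R[M])$ over finitely generated $M$, so it suffices to show every $\alpha \in \mathrm{Sp}_{2k}(R[M])$ becomes elementary in $\mathrm{Sp}_{2k}(R[M'])$ for some larger finitely generated $M' \subseteq L$.

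The second step is to use $c$-divisibility to replace $M$ by a ``nicer'' monoid. Given the finitely generated $M$, the divisibility provides an endomorphism-like map $L \to L$, $\ell \mapsto c\ell$, which is injective on $L$ and whose image contains a copy of $M$; iterating and using that $L$ is seminormal/the class group structure, one arranges $M' \subseteq L$ finitely generated with $R[M'] \cong R[X_1,\dots,X_m, Y_1^{\pm 1},\dots,Y_n^{\pm 1}]$ — or at least with $R[M']$ a retract of such a Laurent polynomial ring, compatibly with the symplectic and elementary symplectic subgroups (functoriality of $\mathrm{Sp}_{2k}$ and $\mathrm{ESp}_{2k}$, plus the fact that $\mathrm{ESp}$ is preserved by ring retractions and the image of $\mathrm{Sp}$ under a retraction that splits lands back where we need). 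Here the hypothesis $\mathrm{Sp}_{2k}(R[X_1,\dots,X_m,Y_1^{\pm1},\dots,Y_n^{\pm1}]) = \mathrm{ESp}_{2k}(R[X_1,\dots,X_m,Y_1^{\pm1},\dots,Y_n^{\pm1}])$ kicks in: the image $\bar\alpha$ of $\alpha$ in this Laurent polynomial ring is elementary symplectic, and pushing back along the retraction $R[M'] \twoheadrightarrow \cdots$, respectively the inclusion $R[M] \hookrightarrow R[M']$, shows $\alpha$ itself is elementary symplectic over $R[M']$.

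The technical heart — and the step I expect to be the main obstacle — is making the ``replace $M$ by a Laurent polynomial ring'' step precise and compatible with $\mathrm{ESp}_{2k}$. One must check that the composite $R[M] \hookrightarrow R[M'] \xrightarrow{\pi} R[X_1,\dots,X_m,Y_1^{\pm1},\dots,Y_n^{\pm1}]$ followed by a section $\sigma$ back into $R[M']$ acts as the identity on the class of $\alpha$ modulo $\mathrm{ESp}_{2k}$, i.e. that $\sigma\pi(\alpha) \cdot \alpha^{-1} \in \mathrm{ESp}_{2k}(R[M'])$; this uses a homotopy/dilation argument (the Quillen–Suslin style ``$t \mapsto 0$'' trick adapted to monoids, exploiting $c$-divisibility so that scaling exponents by powers of $c$ stays inside $L$) together with the fact that $\mathrm{ESp}_{2k}$ is normal in $\mathrm{Sp}_{2k}$ for $k \ge 1$ after stabilization, or at least normalized appropriately here. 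Once the diagram chase closes, combining with the colimit reduction of step one yields $\mathrm{ESp}_{2k}(R[L]) = \mathrm{Sp}_{2k}(R[L])$, which is the assertion of Theorem~\ref{t1}. I would also isolate, as a preliminary lemma, the statement that $\mathrm{Sp}_{2k}$ and $\mathrm{ESp}_{2k}$ commute with filtered colimits of rings, since that is used without comment in step one.
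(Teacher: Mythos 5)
Your first step (writing ${\Sp}_{2k}(R[L])$ and ${\ESp}_{2k}(R[L])$ as filtered colimits over finitely generated submonoids) is fine and is indeed how the paper begins, via Remark \ref{r1}. But the heart of your argument --- ``one arranges $M' \subseteq L$ finitely generated with $R[M']$ isomorphic to, or a retract of, a Laurent polynomial ring, compatibly with ${\Sp}_{2k}$ and ${\ESp}_{2k}$'' --- is a genuine gap, and in fact it is the entire difficulty of the theorem. A finitely generated submonoid of a $c$-divisible monoid is an essentially arbitrary affine monoid (the cone over an arbitrary rational polytope), and affine monoid algebras are \emph{not} retracts of Laurent polynomial rings in any way compatible with these groups: the Srinivas--Gubeladze counterexample $R[U,UV,UV^2]$ shows that for a general affine monoid $M$ the equality ${\Sp}_{2k}(R[M])={\ESp}_{2k}(R[M])$ fails even for regular $R$, so no argument that works one finitely generated submonoid at a time by retraction can succeed. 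The $c$-divisibility must be used to keep \emph{enlarging} the submonoid inside $L$ while geometrically deforming its cross-section polytope, and your proposal supplies no mechanism for that deformation; the ``$t\mapsto 0$ dilation trick'' you invoke is not the relevant tool here.

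What the paper actually does is an induction on $\rank(L)$ together with a geometric reduction of the polytope $\phi(L)$: first reduce to $L$ positive with $\phi(L)$ a closed polytope, then peel off the vertices of $\phi(L)$ by retractions and use the rank-$(r-1)$ induction hypothesis on the boundary faces to descend into the interior monoid $L_*$; then reduce to $R$ local via Proposition \ref{p2}; then apply the pyramidal descent of Lemma \ref{l6}, whose engine is the theory of polarized monoids $(P,\Gamma,M)$ and the patching statement Proposition \ref{p1} (an element of ${\Sp}_{2n}(B)$ that agrees with an element of ${\Sp}_{2n}(B_-)$ up to ${\ESp}_{2n}(C)$ is elementary), to shrink $\phi(L)$ step by step to a rational simplex $\Delta$; only then does Gubeladze's Approximation Theorem A say that $L(\Delta)_*$ is a filtered union of \emph{free} monoids $\pz{r}$, at which point your Laurent-polynomial hypothesis finally applies. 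So the colimit framing is right, but you have assumed the conclusion precisely where the polarized-monoid machinery of Sections \ref{s2}--\ref{s3} is needed.
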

	
	We show that any regular local ring of dimension $d$ belongs to $\mathcal{R}_{k},$ for all $k \geq d+2$. As a consequence we are able to prove that:
	
	\begin{thm}\label{t6}
		Let $R$ be a regular ring of dimension d with ${\KK}_1{\Sp}(R)=0$ and $L$ be a $c$-divisible monoid. Then for any symplectic module $P$ of rank $2k$ with $k \geq \max\{2,d+1\},$
		$${\Sp}(P \perp R[L]^2) =  {\Etrans}_{\Sp}(P \perp R[L]^2).$$
	\end{thm}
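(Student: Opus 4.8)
The plan is to deduce Theorem~\ref{t6} from Theorem~\ref{t1}, using that — as shown above — every regular local ring of dimension $d$ lies in $\mathcal{R}_{k'}$ for $k'\ge d+2$, by passing to the localizations of $R$ and patching. Throughout, write $Q=P\perp R[L]^{2}$, a symplectic $R[L]$-module of rank $2(k+1)$ with $k+1\ge\max\{3,d+2\}$.

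First I would put the problem into an extended, relative form. Because $R$ is regular and $L$ is $c$-divisible, finitely generated projective $R[L]$-modules are extended from $R$ (Gubeladze's solution of Anderson's conjecture), and I would invoke the symplectic refinement of this — made possible by the description of $R[L]$ as the filtered colimit of the rings $R[M]$ over the finitely generated submonoids $M\subset L$ — to obtain $(P,\langle\,,\,\rangle)\cong(P_{0},\langle\,,\,\rangle_{0})\otimes_{R}R[L]$, so that $Q\cong Q_{0}\otimes_{R}R[L]$ with $Q_{0}=P_{0}\perp R^{2}$. Now take $\alpha\in\Sp(Q)$ and let $\alpha_{0}\in\Sp(Q_{0})$ be its image under the augmentation $R[L]\to R$. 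Since $Q_{0}$ contains the hyperbolic plane $R^{2}$ and has rank $2(k+1)\ge 2(d+2)$, it lies well within the symplectic stable range over the $d$-dimensional regular ring $R$; together with $\KK_{1}\Sp(R)=0$ this forces $\alpha_{0}\in\Etrans_{\Sp}(Q_{0})\subseteq\Etrans_{\Sp}(Q)$. Replacing $\alpha$ by $\alpha_{0}^{-1}\alpha$, I may assume $\alpha\equiv\mathrm{Id}$ modulo the augmentation ideal $I(L)=\ker(R[L]\to R)$.

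Next comes the local computation. Fix $\mathfrak{m}\in\Max(R)$ and put $A_{\mathfrak{m}}=R_{\mathfrak{m}}[X_{1},\dots,X_{m},Y_{1}^{\pm 1},\dots,Y_{n}^{\pm 1}]$. Then $R_{\mathfrak{m}}$ is regular local of dimension $\le d$, hence $R_{\mathfrak{m}}\in\mathcal{R}_{k+1}$ because $k+1\ge d+2\ge\dim R_{\mathfrak{m}}+2$; and, $R_{\mathfrak{m}}$ being local, the symplectic stabilization theorems of Suslin and Kopeiko \cite{Suslin-MR0472792,Kop-MR497932} give $\Sp_{2(k+1)}(A_{\mathfrak{m}})=\ESp_{2(k+1)}(A_{\mathfrak{m}})$ for all $m,n\ge 0$. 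Thus the hypotheses of Theorem~\ref{t1} are met for $R_{\mathfrak{m}}$ with index $2(k+1)$, and it yields $\ESp_{2(k+1)}(R_{\mathfrak{m}}[L])=\Sp_{2(k+1)}(R_{\mathfrak{m}}[L])$; since $Q_{\mathfrak{m}}$ is free with the hyperbolic form over the local ring $R_{\mathfrak{m}}$, this reads $\Etrans_{\Sp}(Q_{\mathfrak{m}})=\Sp(Q_{\mathfrak{m}})$, so in particular $\alpha_{\mathfrak{m}}\in\Etrans_{\Sp}(Q_{\mathfrak{m}})$.

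Finally I would patch. We have $\alpha\equiv\mathrm{Id}$ modulo $I(L)$ and $\alpha_{\mathfrak{m}}\in\Etrans_{\Sp}(Q_{\mathfrak{m}})$ for every $\mathfrak{m}\in\Max(R)$, so the local--global principle for the elementary symplectic transvection subgroup over monoid extensions — with $I(L)$ playing the role that $(X)$ plays in the classical Quillen--Suslin setup — gives $\alpha\in\Etrans_{\Sp}(Q)$; as $\alpha$ was arbitrary, $\Sp(P\perp R[L]^{2})=\Etrans_{\Sp}(P\perp R[L]^{2})$. I expect the genuine obstacles to lie in the two background inputs rather than in this deduction: that \emph{symplectic} $R[L]$-modules (not merely projective ones) are extended from $R$ for $c$-divisible $L$, and a local--global principle for $\Etrans_{\Sp}$ over $R[L]$ valid relative to the augmentation ideal. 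Both should be accessible through the colimit presentation $R[L]=\varinjlim R[M]$, but carrying the alternating form and the transvection subgroup through that colimit, with bounds uniform in $M$, is where the real work sits; granting them, the bound $k\ge\max\{2,d+1\}$ drops out exactly as above, reflecting the index $\ge d+2$ threshold for membership in $\mathcal{R}_{\bullet}$.
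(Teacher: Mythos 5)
Your overall strategy --- localize at the maximal ideals of $R$, apply the unstable ${\KK}_1{\Sp}$ result for $R_{\m}[L]$ coming from Theorem \ref{t1}, and then patch with a local--global principle for the transvection subgroup --- is exactly the skeleton of the paper's proof, and your bound-counting ($k+1\ge d+2$ so that the rank-$2k+2$ statement applies) is the same. The differences sit precisely in the two inputs you flag at the end, and in both cases the paper resolves them in a way that sidesteps the difficulty you anticipate. First, the paper never proves (or uses) that the symplectic module $P$ is globally extended from $R$; it only needs that $P_{\m}$ is a \emph{free} symplectic $R_{\m}[L]$-module for each $\m\in\Max(R)$, which follows from Swan's theorem that projective modules over $R_{\m}[L]$ are free for $R_{\m}$ regular local, combined with Remark \ref{r2} (any nondegenerate alternating form on a free module is equivalent to the standard one). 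So your first ``background input'' is replaced by a strictly local statement that is already available, and the initial reduction via the augmentation $R[L]\to R$ is not needed in that form.

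Second, the local--global principle is not invoked relative to the augmentation ideal of the full $c$-divisible monoid $L$. Instead, since $\alpha$ and the finitely many localized data $\alpha_{r_i}\in{\ESp}_{2k+2}(A_{r_i})$ involve only finitely many monoid elements, the paper first descends (after the reduction via Proposition \ref{p4} and the interior argument of Theorem \ref{t1}) to an affine submonoid $L'\subset L$ with $\alpha\in{\Sp}_{2k+2}(R[L'])$; by Gubeladze's embedding results $R[L']$ carries a positive $\mathbb{Z}_{+}^{r'}$-grading with degree-zero part $R$, and the graded local--global principle for ${\Etrans}_{\Sp}$ (Theorem 5.4 of \cite{Rab-man}) applies there, after using ${\KK}_1{\Sp}(R)=0$ to reduce to $\alpha\in{\Sp}(R[L'],R[L']_{+})$. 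This is the concrete substitute for your ``$I(L)$ playing the role of $(X)$'', and it is where the uniformity-in-$M$ issue you worry about is actually discharged: one works with a single affine $L'$ rather than carrying bounds through the colimit. With these two substitutions your argument closes; as written, however, the two granted inputs remain genuine gaps, since neither the symplectic extendedness of $P$ from $R$ nor a local--global principle over the non-affine algebra $R[L]$ is established anywhere.
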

	
	This result aligns with the best possible answer for the general case in the setup of monoid algebras. When $d \leq 1$ or $R$ is geometrically regular ring containing a field, further improvements can be made. When $M$ is free monoid, $i.e., R[M] \simeq R[X_1, \ldots, X_m, Y_1^{\pm{1}}, \ldots, Y_n^{\pm{1}}],$ the result for unstable ${\KK}_1(R[M])$ is due to Suslin ($cf.$ \cite{Suslin-MR0472792}, Corollary 7.10). Using the techniques of this paper, it can also be shown that for a regular ring $R$ with trivial ${\KK}_1,$ we gain the equality ${\Sl}_{n}(R[L]) = {\El}_{n}(R[L])$ for $n \geq \max\{3,d+2\},$ thus extending the work started in \cite{Gubeladze-ClassicalMR1079964}.
	
	Another sought after ${\KK}_1$-analogue is courtesy of Grothendieck's work on ${\KK}_0$-homotopy invariance. When $R$ is regular, Grothendieck proved that ${\KK}_0(R) \simeq {\KK}_0(R[X_1, \ldots, X_m, Y_1^{\pm{1}}, \ldots, Y_n^{\pm{1}}])$. In \cite{BassHellerSwan-MR174605}, Bass-Heller-Swan established the ${\KK}_1$-analogue for polynomial rings. Gubeladze found an element in ${\KK}_1(R[U,UV,UV^2]) \smallsetminus {\KK}_1(R)$ for any regular ring $R$. He observed that this failure of ${\KK}_1(R[M])$-analogue was a consequence of the (absence of) excision property for ${\KK}_1(R[M]),$ where $M$ is a monoid.  When $R$ is regular and $M$ an affine simplicial monoid, it was observed \cite{Gubeladze-NontrivMR1360174} that the only monoid algebras satisfying ${\KK}_1(R) \simeq {\KK}_1(R[M])$ are polynomial algebras ($i.e.,$ when $M$ is free). Fortunately, when $L$ is a $c$-divisible monoid, Gubeladze \cite{Gub-geo&al-MR1414169} proved the presence of excision in $K$-theory of $R[L]$. In (\cite{Gubeladze-ClassicalMR1079964}, Theorem 2.24) the expected ${\KK}_1(R[L])$-invariance was established. We establish ${\KK}_1{\Sp}(R[L])$-invariance when $L$ is $c$-divisible:
	\begin{thm}\label{t2}
		Let R be a regular ring and L a $c$-divisible monoid, where $c > 1$. Then 
		$${\KK}_1{\Sp}(R[L]) \simeq {\KK}_1{\Sp}(R).$$
	\end{thm}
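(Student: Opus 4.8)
The plan is to transplant Gubeladze's computation of $\KK_1(R[L])$ (\cite{Gubeladze-ClassicalMR1079964}, Theorem~2.24) to the symplectic $K$-group. Write $A=R[L]$ and let $\varepsilon\colon A\to R$ be the canonical augmentation (Section~\ref{s2}), which splits the structure map $R\to A$. Consequently the induced homomorphism ${\KK}_1{\Sp}(R)\to{\KK}_1{\Sp}(A)$ is split injective, and the whole content of the theorem is its surjectivity; equivalently, the vanishing of the relative group ${\KK}_1{\Sp}(A,\mathfrak a)$, where $\mathfrak a=\ker\varepsilon$.

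The first reduction is to a finitely generated situation. A symplectic matrix over $A$, and any elementary witness to a symplectic relation, involve only finitely many elements of $L$; hence ${\KK}_1{\Sp}$ commutes with filtered colimits of rings, and --- writing $L$ as the filtered union of the $c$-divisible hulls $\Phi(N)$ of its finitely generated submonoids $N$ --- it suffices to prove the statement for $L=\Phi(M)$ with $M$ finitely generated. For such $L$, identifying $c^{-j}M\cong M$ by rescaling turns the chain $M\subseteq c^{-1}M\subseteq c^{-2}M\subseteq\cdots$ into the multiplication-by-$c$ endomorphism $\psi$ of $M$, so that
\begin{gather*}
A=\varinjlim\bigl(R[M]\xrightarrow{\ \psi\ }R[M]\xrightarrow{\ \psi\ }R[M]\longrightarrow\cdots\bigr),\\
{\KK}_1{\Sp}(A)=\varinjlim\bigl({\KK}_1{\Sp}(R[M])\xrightarrow{\psi_\ast}{\KK}_1{\Sp}(R[M])\xrightarrow{\psi_\ast}\cdots\bigr).
\end{gather*}
Splitting ${\KK}_1{\Sp}(R[M])={\KK}_1{\Sp}(R)\oplus\overline{{\KK}_1{\Sp}}(R[M])$ by the augmentation of $R[M]$, one checks that $\psi_\ast$ is the identity on the first summand, since $\psi$ commutes with $R\hookrightarrow R[M]$ and with $R[M]\twoheadrightarrow R$. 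Thus the theorem is equivalent to the assertion that $\psi_\ast$ is locally nilpotent on the reduced part $\overline{{\KK}_1{\Sp}}(R[M])$, i.e.\ $\varinjlim\bigl(\overline{{\KK}_1{\Sp}}(R[M])\xrightarrow{\psi_\ast}\cdots\bigr)=0$.

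This last step is the main obstacle. One cannot finish by homotopy invariance, because $R[M]$ is typically singular --- $R[U,UV,UV^2]$ and its $c$-divisible hull already carry nonzero reduced ${\KK}_1$ --- so $\overline{{\KK}_1{\Sp}}(R[M])\ne 0$ in general, and the collapse of the colimit is a genuine manifestation of the dilation geometry of $c$-divisible monoids. The tool I would use is the symplectic analogue of Gubeladze's excision property for $c$-divisible monoid algebras (\cite{Gub-geo&al-MR1414169}). Choosing a monoid homomorphism $w\colon L\to\mathbb Z[1/c]_{\ge 0}$ positive off the identity (a $\mathbb Z[1/c]$-rational interior point of the dual of $\mathbb R_+M$), the rule $x\mapsto x\cdot s^{\,w(x)}$ defines an $R$-algebra map $\Theta\colon A\to A[S]$ with $S=\mathbb Z[1/c]_{\ge 0}$, whose specialisations at $s=1$ and at $s=0$ are $\operatorname{id}_A$ and the idempotent endomorphism $A\xrightarrow{\varepsilon}R\hookrightarrow A$; so $\operatorname{id}_A$ and the augmentation are $S$-homotopic. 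Combining this --- with homotopy invariance of ${\KK}_1{\Sp}$ over the regular base $R$, in the form of the theorems of Suslin and Kopeiko (\cite{Suslin-MR0472792},\cite{Kop-MR497932}) and Theorem~\ref{t1}, and with the idempotence of $\mathfrak a$ forced by $c$-divisibility --- a continuity argument along the tower displayed above pushes the reduced classes to $0$ in the limit, making $\psi_\ast$ locally nilpotent on $\overline{{\KK}_1{\Sp}}(R[M])$ and hence ${\KK}_1{\Sp}(A,\mathfrak a)=0$ for $L=\Phi(M)$. Passing back up the filtered colimit over all finitely generated $M\subseteq L$ then yields ${\KK}_1{\Sp}(R[L])\simeq{\KK}_1{\Sp}(R)$. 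The delicate point throughout is the excision/continuity input of the third paragraph: in the linear case one may simply quote \cite{Suslin-MR0472792},\cite{Kop-MR497932}, but here the analogous control of the relative symplectic $K_1$ over the singular rings $R[\Phi(M)]$ must be established essentially from scratch, exploiting that after inverting $c$ the cone $\mathbb R_+M$ contracts under its dilations.
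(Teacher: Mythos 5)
Your overall frame (the augmentation splitting reduces everything to surjectivity, then reduce to a finitely generated $M$ and its $c$-divisible hull, realizing $R[L]$ as a dilation colimit) is reasonable, but the argument stalls exactly where you say it does, and that point is the entire theorem. The dilation homotopy $\Theta\colon A\to A[S]$ with $S=\mathbb{Z}[1/c]_{\geq 0}$ cannot close the loop: $A[S]=R[L\times S]$ and $L\times S$ is again a $c$-divisible monoid, so knowing that the two evaluations $A[S]\to A$ agree on ${\KK}_1{\Sp}$ is essentially equivalent to the statement being proved (if you had ${\KK}_1{\Sp}(R[L\times S])\simeq{\KK}_1{\Sp}(R)$ you would already be done, since $A$ is a retract of $A[S]$). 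The substitute you invoke --- a symplectic analogue of Gubeladze's excision for $c$-divisible monoid algebras plus an unspecified ``continuity argument'' forcing $\psi_\ast$ to be locally nilpotent on the reduced part --- is precisely the content you acknowledge ``must be established essentially from scratch.'' Nothing in the proposal establishes it, so the proof is incomplete at its only nontrivial step.

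The paper takes a different, more concrete route that avoids excision altogether. It works with an explicit class $\alpha\in{\Sp}_{2k}(R[L])$ for $k\geq\dim(R)+2$, localizes at each $\m\in\Max(R)$, and applies the unstable equality ${\Sp}_{2k}(R_{\m}[L])={\ESp}_{2k}(R_{\m}[L])$ of Theorem \ref{t3} (available because $R_{\m}$ is regular local with trivial ${\KK}_1{\Sp}$). After the same reductions as in the proof of Theorem \ref{t1} (positivity, finite rank, normality, descent into the interior $L_*$), $\alpha$ is trapped in ${\Sp}_{2k}(R[L'])$ for an affine submonoid $L'$ whose algebra carries a positive $\mathbb{Z}_{+}^{r'}$-grading with degree-zero part $R$, and the graded local--global principle (Theorem 8.2 of \cite{Rab-Kun}) patches the local elementary factorizations into $\alpha\in{\Sp}_{2k}(R)\,{\ESp}_{2k}(R[L])$. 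This gives surjectivity of ${\KK}_1{\Sp}(R)\to{\KK}_1{\Sp}(R[L])$, and injectivity comes from the augmentation retraction. To salvage your approach you would need to actually prove the symplectic excision/continuity statement over the singular rings $R[M]$, which is a separate and substantial piece of work; as written, the proposal is a plan rather than a proof.
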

	
	A question on the existence of a non-free, affine monoid which is ${\KK}_1$-invariant for all regular rings, was raised in \cite{Gubeladze-NontrivMR1360174}. To answer this question, Krishna-Sarwar \cite{SarwarKrishna-MR3995720}, showed that if $R$ is a regular ring containing $\mathbb{Q},$ then ${\KK}_1(R) \simeq {\KK}_1(R[X,Y,ZX,ZY]).$ This work was extended by  Schappi (\cite{schäppi2023symplecticktheoryproblemmurthy}, Theorem 1.2), answering  this question in totality. There is a good chance that the same deductions can be made for ${\KK}_1{\Sp}$-invariance for the monoid algebra $R[X,Y,ZX,ZY]$. Further, the techniques developed here can be utilized to collect the ${\KK}_2{\Sp}$-invariance result of algebra generated by simplicial $c$-divisible monoids as in (\cite{Gubeladze-ClassicalMR1079964}, Theorem 3.1 and 3.4). It would also be interesting to see extension of these results for other suitable classical groups and their generalizations. We leave it to the reader to verify that the results presented here also come through for Laurent polynomial extension of these algebras as well, $i.e.$, for conditions provided in the hypothesis, ${\Sp}_{2k}(R[L][X_1, \ldots, X_m, Y_1^{\pm{1}}, \ldots, Y_n^{\pm{1}}])$ coincides with its elementary symplectic subgroup.
	
	\section{Prologue: Geometry of polarized monoids}\label{s2}
	
	All our rings will be commutative Noetherian with unity and monoids will be commutative cancellative and torsion-free. Let $R$ be a ring and $n \in \mathbb{N}$. Define the matrices over $R$ by:
	$$ \psi_1 = \begin{pmatrix}
		0 & 1 \\
		-1 & 0 
	\end{pmatrix} \text{ and } \psi_n = \psi_{n-1} \perp \psi_1$$
	Define $\widetilde{u}=u^T\psi_n$ for any $u \in R^{2n}$. The symplectic group ${\Sp}_{2n}(R)$ over a ring $R,$ is given by:
	$${\Sp}_{2n}(R)= \{ \alpha \in {\GL}_{2n}(R) \mid \alpha^T\psi_n\alpha = \psi_n \}$$
	Let $e_i$'s  denote the standard basis vectors with $1$ in the $i$'th position and $0$ elsewhere, and $e_{ij}$'s the matrix $e_ie_j^T$. Let ${\rm Id}_{2n}$ denote the identity matrix of size $2n$ and $\sigma$ be the product of transpositions, $$\sigma = (1,2)(3,4)\ldots(2n-1,2n).$$ 
	For $\lambda \in R$ and $1 \leq i < j \leq k=2n$ with $\sigma(i) \neq j,$ one defines 
	$$se_{ij}(\lambda) = {\rm Id}_{2n}+\lambda e_{ij} - \lambda (-1)^{i+j}e_{\sigma(j)\sigma(i)}$$ 
	For values of $n,i,j, \lambda$ as indicated above, the elementary symplectic group ${\ESp}_{2n}(R),$ is the subgroup of ${\Sp}_{2n}(R)$ generated by $e_{i(\sigma(i))}(\lambda)$'s and $se_{ij}(\lambda)$'s.

	Let $(M,+)$ be a commutative monoid. Then $M$ is cancellative, if for $x, y, z \in M,$ $x+y=x+z,$ then $y=z,$ and torsion free if for some $n \in  \pz{},$ $nx=0,$ then $x = 0$. The group completion of $M$ is denoted by $gp(M)$. The rank of a monoid $M,$ denoted by $\rank(M),$ is defined as the dimension of the $\mathbb{Q}$-vector space $\mathbb{Q} \otimes_{\nz{}} gp(M)$. A monoid is positive if its group of units is trivial. The monoid $M$ is normal, if for $x \in gp(M)$ and $n \in \pz{},$ $nx \in M,$ then $x \in M$. Let  $c \in \pz{}$. The monoid $M$ is called c-divisible, if for all $m \in M$ there exists $n \in M,$ such that $cn=m$.

	Since $M$ is cancellative and torsion-free, therefore $M$ can be embedded in $\mathbb{R}^{r},$ where $r=\rank(M)$. Henceforth, we may assume without loss of generality that $M \subseteq \mathbb{R}^{r},$ where $0$ coincides with the origin. For $x \in \mathbb{R}^r,$ we denote by $\phi(x),$ the intersection of the unit sphere $S^{r-1}$ (in $\mathbb{R}^r$) with the line joining the origin to $x$. By $\phi(M)$ we denote the smallest subset in $S^{r-1}$ containing the set $\{\phi(m) \mid m \in M \smallsetminus \{0\} \}$ such that if $x, y$ are two non-antipodal points in $\phi(M),$ then the shortest line joining $x$ to $y$ on $S^{r-1}$ is contained in $\phi(M)$. Here $\phi(M)$ is the convex hull of the set $\{\phi(m) \mid m \in M \smallsetminus \{0\} \}$ in $S^{r-1},$ written as: 
	$$\phi(M)=\conv(\{\phi(m) \mid m \in M \smallsetminus \{0\} \}).$$ 
	Given a convex (on $S^{r-1}$) subset $X \subseteq \phi(M),$ a new submonoid of $M$ can be defined by:
	$$M(X) = \{m \in M \smallsetminus \{0\} \mid \phi(m) \in X\} \cup \{0\}. $$
	
	\noindent Further denote, ${\intt}(M):=M(\intt(\phi(M)))$ and $\sm:= \intt(M) \cup \{0\}$. A point $P \in S^{r-1}$ is called rational, if there exists a rational point $x \in R^r$ such that $\phi(x) = P$. A convex subset $X$ of $S^{r-1}$ is defined to be rational, if it is the convex hull of rational points. A convex set $X$ (in $S^{r-1}$) is open, if there exists a subspace $H \subset\mathbb{R}^r$ such that $X$ is open in $H \cap S^{r-1}$.
	
	\begin{remark}\label{r1}
		(\cite{Gubeladze-ClassicalMR1079964}, \S I) Every open convex $(d$ -$1)$-dimensional subset in $S^{d-1}$ is rational and can be represented as union of $(d$ -$1)$ dimensional rational closed polytopes. Thus if $M$ is a $c$-divisible monoid with $\phi(M)$ open, then $M$ can be seen as direct limit of $c$-divisible monoids $M_i$'s with $\phi(M_i)$ as closed polytope.
	\end{remark}

	The following definition and notations are as per \cite{Gubeladze-ClassicalMR1079964}:
	
	\begin{definition}
		A triple $(P, \Gamma, M)$ is called a polarized monoid if
		\begin{enumerate}[(i)]
			\item $M$ is an affine normal monoid, $P$ is a rational point and $\Gamma$ a closed convex rational polytope such that $P$ is not in the affine space generated by any arbitrary proper face of $\Gamma$.
			\item $\dim(\Gamma) = \dim(\phi(M))$ and $\phi(M) = \conv(P, \Gamma)$.
			\item For any $(\dim(\Gamma) -1)$-dimensional face $\gamma$ of $\Gamma,$ the $M$-submonoid $M_{\gamma},$ defined as
			$M_{\gamma} := M(\conv(P, \gamma)),$ is generated by $ M({P}) \cup M(\gamma).$
		\end{enumerate}
		
	\end{definition}
	\noindent For the polarized monoid $(P, \Gamma, M),$ we fix the following notation:
	\begin{enumerate}[(i)]
		\item $P_-$ is the point on $S^{r-1}$ diametrically opposite to $P$.
		\item $\m$ is a maximal ideal of $R[M(\Gamma)]$ containing $M(\Gamma) \setminus \{0\}$. 
		\item $\mathcal{N}$ is the $M$-submonoid, of rank 1, given by $M(\{P\})$ and further $\mathcal{N} = \langle t \rangle$.
		\item $M_{-}$ denotes the submonoid of $K(M)$ generated by $M(\Gamma) \cup \{-m \mid m \in M(\{P\})\}$.
		
	\end{enumerate}
	
	\noindent Note that here $(P_-, \Gamma, M_-)$ also forms a polarized triple. For simplicity sake, instead of saying $(P, \Gamma, M)$ forms a polarized triple, we will say $M$ is a polarized monoid. We fix the notations, $$A=R[M(\Gamma)]_{\m}, B=R[M]_{\m}, B_{-}=R[M_{-}]_\m \text{ and } C=R[\mathcal{N}^{-1}M]_\m.$$ It can be shown that $B \cap B_- = A$ (Step 5 of Proposition 2.14 in \cite{Gubeladze-ClassicalMR1079964}).

	\section{Equality of Symplectic and Elementary symplectic groups}\label{s3}

	It is known that if $R$ is a local ring or a euclidean domain, then ${\Sp}_{2k}(R)={\ESp}_{2k}(R)$ for all $k \geq 1$. Kopeiko, in \cite{Kop-MR497932}, showed that $R$ is a field and $A=R[X_1, \ldots, X_m],$ then for $k \geq 2,$ the symplectic group ${\Sp}_{2k}(A)={\ESp}_{2k}(A)$. When $k=1,$ this equality doesn't hold due to \cite{Cohn-MR207856}. This equality is what we refer to as the unstable ${\KK}_1{\Sp}$. Clearly, unstable ${\KK}_1{\Sp}$ implies the results for stable ${\KK}_1{\Sp},$ $i.e., {\Sp}(A) = {\ESp(A)}$. The primary aim of this section is to determine base rings $R$ such that stable and unstable ${\KK}_1{\Sp}$ results holds monoid algebra generated by $c$-divisible monoids $L$.
	\newline
	
	Let $k \in \mathbb{N}$ and $R$ be a ring.  Let $R(X)$ denote the localization of the ring $R[X]$ with respect to all polynomials that have invertible leading coefficients in $R[X]$. Define $\mathcal{R}_k$ to be the class of rings satisfying the following: $R \in \mathcal{R}_k,$ if
	\begin{enumerate}[(i)]
		\item ${\ESp}_{2k}(R) = {\Sp}_{2k}(R)$ and
		\item if there exist finite rings  $R=R_1, R_2 \ldots, R_t=R'$ for $t>1,$ such that 
		\begin{enumerate}[(a)]
			\item $R_2=R(X)$ or $R_2=R_{\m}$ for some $\m \in \Max(R)$,
			\item  for $2 < i \leq t$ we have 
			$$ 
			R_i= \left\{
			\begin{array}{ll}
				R_{i-1}(X) & \text{if } R_{i-1}={(R_{i-2})}_{\m_{i-2}}, \\
				{(R_{i-1})}_{\m_{i-1}} & {\rm else}.\\
			\end{array} 
			\right. 
			,$$
			where $\m_{j} \in \Max(R_j)$ for $1 \leq j \leq t$.
		\end{enumerate}
		Then  ${\ESp}_{2k}(R') = {\Sp}_{2k}(R')$.
	\end{enumerate}
	
	Let $R \in \mathcal{R}_{t}$, for some $t$ such that $2t \geq \dim(R).$ By observing that $\dim(R_i) \leq \dim(R)$ for $R_i$'s as in $(ii)$, the surjective  part of the Stabilization Theorem in \cite{Vaserstein-MR0269722} gives us $R \in \mathcal{R}_{k}$ for all $k \geq t$.
	\begin{example}\label{r3}
		Let $(R, \m)$ be a regular local ring of dimension $\leq 1$. We claim that $R \in \mathcal{R}_k$ for all $k \geq 1$. From (\cite{Lam-MR2235330}, Chapter IV, Proposition 1.4), one may write $R(X) = R'_Y$, where $R'=R[Y]_{(\m,Y)}$ and $Y=X^{-1} \in R(X)$. Using (\cite{Lam-MR2235330}, Chapter IV, Corollary 6.2), we get $R(X)$ is a special PID and therefore
		$${\Sp}_2(R(X)) = {\Sl}_2(R(X))={\El}_2(R(X))={\Ep}_2(R(X)).$$
		
		By the above observation, we get $R \in \mathcal{R}_k$ for all $k \geq 1$. By (\cite{Vaser-Men-Fritz-MR1086811}), Theorem 1.8), (\cite{Kop-MR497932}, Lemma 4.1) and induction on $n,$  we can show that for  $k \geq 2$ and $m,n \geq 0,$ we have
		$${\Sp}_{2k}(R[X_1, \ldots, X_m, Y_1^{\pm{1}}, \ldots, Y_n^{\pm{1}}]) = {{\ESp}_{2k}}(R[X_1, \ldots, X_,, Y_1^{\pm{1}}, \ldots, Y_n^{\pm{1}}]).$$		
	\end{example}
	
	Due to \cite{Cohn-MR207856}, $k=2$ is the best bound we achieve in the above equation. Since it was shown that:
	$${\Sp}_{2}(\nz{}[X_1, \ldots, X_m]) \neq {{\ESp}_{2}}(\nz{}[X_1, \ldots, X_m]).$$
	
	The following theorem can be seen as the symplectic analogue of Theorem 1, 2 and 6 of \cite{BassHellerSwan-MR174605}. The proof comes through by using the localization exact sequence or simply following the workings of the proofs in the aforementioned theorems on replacing ${\GL}_n$ and ${\El}_n$ by ${\Sp}_{2n}$ and $\Ep_{2n},$ respectively.
	
	\begin{theorem}\label{ktheorysp}
		Let $R$ be a regular ring and $X$ an indeterminate. Then
		\begin{enumerate}[$($i$)$]
			\item ${\KK}_0{\Sp}(R) \simeq {\KK}_0Sp(R[X]) \simeq {\KK}_0{\Sp}(R[X,X^{-1}])$,
			\item ${\KK}_1{\Sp}(R[X]) \simeq {\KK}_1{\Sp}(R)$,
			\item ${\KK}_1{\Sp}(R[X,X^{-1}]) \simeq {\KK}_1{\Sp}(R)$.
		\end{enumerate}
	\end{theorem}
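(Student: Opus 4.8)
The plan is to carry the arguments of \cite{BassHellerSwan-MR174605} (Theorems~1,~2,~6) over to the symplectic setting essentially verbatim, systematically replacing ${\GL}_n$, ${\El}_n$ and the linear Whitehead lemma by ${\Sp}_{2n}$, ${\ESp}_{2n}$ and its symplectic analogue; equivalently, one feeds the inclusions $R[X]\hookrightarrow R[X,X^{-1}]\hookleftarrow R[X^{-1}]$ into the localization (Mayer--Vietoris) machinery of symplectic $K$-theory. Write ${\ESp}(R)=\varinjlim_n{\ESp}_{2n}(R)$, a normal subgroup of ${\Sp}(R)=\varinjlim_n{\Sp}_{2n}(R)$, so that ${\KK}_1{\Sp}(R)={\Sp}(R)/{\ESp}(R)$ is abelian, and let ${\KK}_0{\Sp}(R)$ be the Grothendieck group of symplectic $R$-spaces under $\perp$. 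Since $R$ is regular, so are $R[X]$ and $R[X,X^{-1}]$. For $A\in\{R[X],R[X,X^{-1}]\}$ the unital ring maps $R\hookrightarrow A$ and $A\twoheadrightarrow R$ (sending $X$ to $0$, respectively to $1$) exhibit ${\KK}_i{\Sp}(R)$ as a direct summand of ${\KK}_i{\Sp}(A)$; in each case it only remains to see that the complementary summand vanishes.

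The substance lies in (ii): the vanishing of $N{\KK}_1{\Sp}(R):=\ker\bigl({\KK}_1{\Sp}(R[X])\xrightarrow{X=0}{\KK}_1{\Sp}(R)\bigr)$ for regular $R$ --- concretely, that every $\gamma\in{\Sp}_{2n}(R[X])$ with $\gamma(0)={\rm Id}$ lies in ${\ESp}_{2n}(R[X])$ once $n$ is in the stable range. I would obtain this exactly as \cite{BassHellerSwan-MR174605} obtains $NK_1=0$ for regular rings, namely by the resolution theorem: homotopy invariance of symplectic $K$-theory holds for regular rings, giving ${\KK}_1{\Sp}(R[X])\cong{\KK}_1{\Sp}(R)$ directly. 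Alternatively, and more in the spirit of the present paper, one argues with elementary matrices: the local--global principle for ${\ESp}$ in the sense of Quillen--Suslin --- available through the dilation and splitting lemmas that already underlie the discussion of $\mathcal{R}_k$ (cf.\ \cite{Kop-MR497932}, \cite{Vaserstein-MR0269722}) --- reduces $N{\KK}_1{\Sp}(R)=0$ to the case of a regular \emph{local} ring, where ${\Sp}_{2n}(R[X])={\Sp}_{2n}(R)\cdot{\ESp}_{2n}(R[X])$ in the stable range by the word-for-word symplectic transcription of Suslin's argument for ${\Sl}_n(R[X])={\Sl}_n(R)\,{\El}_n(R[X])$. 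The homotopy invariance ${\KK}_0{\Sp}(R)\cong{\KK}_0{\Sp}(R[X])$ in (i) is the symplectic form of Grothendieck's theorem and follows by the same resolution argument (or from the linear statement together with the hyperbolic and forgetful functors).

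For the Laurent ring one runs the symplectic Bass fundamental theorem. Given $\beta\in{\Sp}_{2n}(R[X,X^{-1}])$, after stabilizing one factors $\beta\equiv\beta_-\cdot\mu\cdot\beta_+\pmod{{\ESp}(R[X,X^{-1}])}$ with $\beta_+\in{\Sp}(R[X])$, $\beta_-\in{\Sp}(R[X^{-1}])$ and $\mu$ a symplectic monomial matrix --- the symplectic analogue of Bass's decomposition lemma (a Birkhoff-type factorization), obtained by clearing positive powers of $X$ by ${\ESp}$ over $R[X]$ from one side and negative powers by ${\ESp}$ over $R[X^{-1}]$ from the other. The crucial point, and the one genuine departure from the linear case, is that the monomial part contributes nothing: up to the (signed-permutation) Weyl group, $\mu$ has the form $\diag(X^{a_1},X^{-a_1},\dots,X^{a_n},X^{-a_n})$ in the $\psi_n$-blocks, and $\diag(u,u^{-1})\in{\ESp}_2(A)$ for every unit $u$ of every ring $A$ (an elementary computation, using $\psi_1\in{\ESp}_2(A)$), whence $\mu\in{\ESp}(R[X,X^{-1}])$. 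In the linear world $\diag(X,1,\dots,1)$ is not elementary, which is precisely the source of the ${\KK}_0(R)$ summand that is absent here. Now by (ii), applied over $R[X]$ and over $R[X^{-1}]$, $\beta_\pm$ is congruent modulo ${\ESp}$ to its value at $X=0$, resp.\ $X^{-1}=0$, which lies in ${\Sp}(R)$; hence $\beta$ is congruent modulo ${\ESp}(R[X,X^{-1}])$ to an element of ${\Sp}(R)$. Together with the injectivity from the first paragraph this gives ${\KK}_1{\Sp}(R[X,X^{-1}])\cong{\KK}_1{\Sp}(R)$, and the same bookkeeping on ${\KK}_0{\Sp}$ (the negative-$K$ obstruction ${\KK}_{-1}{\Sp}$ again vanishing for regular $R$) gives ${\KK}_0{\Sp}(R[X,X^{-1}])\cong{\KK}_0{\Sp}(R)$, completing (i) and (iii).

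The main obstacle is step (ii): the vanishing of $N{\KK}_1{\Sp}(R)$ for an arbitrary regular ring, which is the symplectic shadow of the hard core of the Quillen--Suslin circle of ideas and needs either the resolution theorem in symplectic $K$-theory or the combination of the symplectic local--global principle with the equality ${\Sp}_{2n}(R[X])={\Sp}_{2n}(R)\cdot{\ESp}_{2n}(R[X])$ over regular local rings. Everything else --- normality of ${\ESp}$, the symplectic Whitehead lemma, the Birkhoff/Bass decomposition, the elementariness of $\diag(u,u^{-1})$, and the assembly of the fundamental exact sequence --- is a faithful transcription of \cite{BassHellerSwan-MR174605}.
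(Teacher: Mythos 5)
Your proposal takes essentially the same route as the paper, whose entire proof consists of the remark that the statement follows ``by using the localization exact sequence or simply following the workings of'' Bass--Heller--Swan with ${\GL}_n$, ${\El}_n$ replaced by ${\Sp}_{2n}$, ${\ESp}_{2n}$. Your transcription is correct and in fact more complete than the source, since it makes explicit the one point the paper leaves unsaid: the monomial factor in the Bass decomposition over $R[X,X^{-1}]$ is elementary because $\diag(u,u^{-1})\in{\ESp}_2(A)$ for every unit $u$, which is exactly why the ${\KK}_0$-summand of the linear fundamental theorem is absent from part (iii).
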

	
	\begin{corollary}\label{cktheorysp}
		Let $R$ be a regular ring of dimension $d$ with ${\KK}_1{\Sp}(R)=0$. For $k \geq d+2$ we have
		$${\ESp}_{2k}(R[X_1, \ldots, X_m, Y_1^{\pm{1}}, \ldots, Y_{n}^{\pm{1}}]) = {\Sp}_{2k}(R[X_1, \ldots, X_m, Y_1^{\pm{1}}, \ldots, Y_{n}^{\pm{1}}]).$$
	\end{corollary}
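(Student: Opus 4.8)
The plan is to first collapse the ${\KK}_1{\Sp}$ of the big ring onto that of $R$ by homotopy invariance, and then to descend from the resulting stable equality to the size‑$2k$ equality by means of a non‑stable stabilization theorem whose range does not grow with the number of indeterminates. Write $A = R[X_1, \ldots, X_m, Y_1^{\pm 1}, \ldots, Y_n^{\pm 1}]$. Since $R$ is regular Noetherian of dimension $d$, every ring in the tower obtained by adjoining one $X_i$, and then one pair $Y_j, Y_j^{-1}$, at a time is again regular (polynomial extensions and localizations preserve regularity), and $A$ is regular of Krull dimension $d+m+n$. Applying Theorem~\ref{ktheorysp}(ii) at each polynomial step and Theorem~\ref{ktheorysp}(iii) at each Laurent step along this tower gives
$$ {\KK}_1{\Sp}(A) \,\simeq\, {\KK}_1{\Sp}(R) \,=\, 0, $$
so that ${\Sp}(A) = {\ESp}(A)$ holds stably.

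It remains to pull this down to a fixed size $2k$. A direct appeal to Vaserstein's stabilization theorem (\cite{Vaserstein-MR0269722}) over $A$ would only force ${\Sp}_{2k}(A) = {\ESp}_{2k}(A)$ once $2k$ surpasses roughly $\dim A = d+m+n$, which is useless when $m+n$ is large. The right tool is the non‑stable symplectic stabilization over polynomial and Laurent polynomial rings, namely the symplectic analogue of Suslin's theorem (\cite{Suslin-MR0472792}, Cor.~7.10; cf.~also \cite{Kop-MR497932}), which for a Noetherian ring $R$ of dimension $d$ and every $k \geq d+2$ yields
$$ {\Sp}_{2k}(A) = {\ESp}_{2k}(A) \cdot {\Sp}_{2k}(R), $$
with the bound $d+2$ uniform in $m$ and $n$. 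Its proof rests on Suslin's monic‑polynomial trick together with Horrocks‑type patching carried over to the symplectic group, and it is precisely here that one needs the elementary symplectic group to act transitively on the relevant unimodular data over polynomial rings already in the range $k \geq \dim R + 2$; this is the genuine obstacle of the argument. Separately, over $R$ itself there is no difficulty: since $\dim R = d$ and ${\KK}_1{\Sp}(R) = 0$, the classical injective stability of \cite{Vaserstein-MR0269722} gives ${\Sp}_{2k}(R) = {\ESp}_{2k}(R)$ for all $k \geq d+2$.

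Combining the two, for every $k \geq d+2$ we obtain
$$ {\Sp}_{2k}(A) = {\ESp}_{2k}(A)\cdot {\Sp}_{2k}(R) = {\ESp}_{2k}(A)\cdot {\ESp}_{2k}(R) = {\ESp}_{2k}(A), $$
using ${\ESp}_{2k}(R) \subseteq {\ESp}_{2k}(A)$, which is exactly the claim. Thus the only non‑mechanical step is the displayed equality ${\Sp}_{2k}(A) = {\ESp}_{2k}(A)\cdot {\Sp}_{2k}(R)$: everything preceding it is a bookkeeping consequence of Theorem~\ref{ktheorysp}, and everything following it is formal, but that equality with a bound independent of the number of adjoined indeterminates demands the symplectic version of the monic‑polynomial/Horrocks machinery, just as in Suslin's treatment of the linear case.
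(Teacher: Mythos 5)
Your argument is correct and follows the route the paper intends: iterate Theorem~\ref{ktheorysp} to get ${\KK}_1{\Sp}(A)\simeq{\KK}_1{\Sp}(R)=0$, then descend to size $2k$ using Kopeiko's non-stable symplectic stabilization over (Laurent) polynomial rings, whose bound $2k\geq\max\{4,2d+4\}$ depends only on $\dim R$, together with Vaserstein's injective stability over $R$ itself. Your formulation via the decomposition ${\Sp}_{2k}(A)={\ESp}_{2k}(A)\cdot{\Sp}_{2k}(R)$ is just an equivalent packaging of the injectivity of $\phi_k$ that the paper uses, and you correctly identify why a naive appeal to Vaserstein over $A$ would fail.
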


	\begin{example}\label{r5}
		Let $(R, \m)$ be a regular local ring of dimension $d$. We claim that $R \in \mathcal{R}_k$ for $k \geq d+2$. From (\cite{Lam-MR2235330}, Chapter IV, Proposition 1.4), one may write $R(X) = R'_Y$, where $R'=R[Y]_{(\m,Y)}$ and $Y=X^{-1} \in R(X)$. Since ${\KK}_0Sp$ and ${\KK}_1{\Sp}$ are trivial for local rings, on using the localization exact sequence one gets
		$$0 \simeq {\KK}_1{\Sp}(R') \rightarrow {\KK}_1{\Sp}(R'_Y) \simeq {\KK}_1{\Sp}(R(X)) \xrightarrow{\delta} {\KK}_0Sp(R'/YR') \simeq {\KK}_0{\Sp}(R) \rightarrow {\KK}_0{\Sp}(R'),$$
		we may conclude ${\KK}_1{\Sp}(R(X))=0$. As $\dim(R(X))=\dim(R) =d,$ by on using injective part of the Stabilization theorem in \cite{Vaserstein-MR0269722} we get ${\ESp}_{2k}(R(X)) = {\Sp}_{2k}(R(X)),$ for $k \geq d+2$. Compare with Example \ref{r3} to note that for $d=1$ we have a much improved result.
	\end{example}
	
	\begin{example}\label{r6}
		Let $(R, \m)$ be a geometrically regular ring containing a field. We may assume $d>1$. Then ${\KK}_1{\Sp}(R(X))=0$. By (\cite{Basuinjective-MR2578583}, Theorem 2), it follows ${\ESp}_{2k}(R(X)) = {\Sp}_{2k}(R(X))$ for $2k \geq \max\{3,d+1\}$. Thus $R \in \mathcal{R}_{\lceil k/2 \rceil}$. If $R$ is also local, by (\cite{Basuinjective-MR2578583}, Remark 3.16), ${\Sp}_{2k}(R[X]) = {\ESp}_{2k}(R[X])$ for $k \geq 2$. By inducting on $m+n$ and using (\cite{Kop-MR497932}, Lemma 4.1), for $k \geq 2$ we arrive at:
		$${\Sp}_{2k}(R[X_1, \ldots, X_m, Y_1^{\pm{1}}, \ldots, Y_n^{\pm{1}}]) = {{\ESp}_{2k}}(R[X_1, \ldots, X_,, Y_1^{\pm{1}}, \ldots, Y_n^{\pm{1}}]).$$
		This is a significant reduction compared to Example \ref{r5}.
	\end{example}
	
	We recall the following notations from Section \ref{s2}:
	$$A=R[M(\Gamma)]_{\m}, B=R[M]_{\m}, B_{-}=R[M_{-}]_\m, C=R[\mathcal{N}^{-1}M]_\m \text{ and } \mathcal{N}=\langle t \rangle. $$
	
	\noindent We now begin our work on the understanding the structure of ${\ESp}_{2n}(C)$:
	
	\begin{definition}
		For each $n \geq 1,$ we can define:
		$$\mathbb{M}^{(n)} = \{I \subseteq \{1, \ldots,2n\} \mid \forall~i \in I, \text{either } i \in I, \sigma(i) \notin I \text{ or } i \notin I, \sigma(i) \in I \}.$$
		Corresponding to $I \in \mathbb{M}^{(n)},$ a new diagonal matrix can be defined as:
		$$\delta_I = \diag(d_1, \ldots, d_{2n}),$$
		where
		$$ 
		d_i= \left\{
		\begin{array}{ll}
			t & i \in I, \\
			1 & {\rm else}.\\
		\end{array} 
		\right. 
		$$
	\end{definition}
	
	\begin{lemma}\label{l1}
		Let R be a ring and $M$ be a polarized monoid. Let $n \geq 2$ and $\alpha \in {\ESp}_{2n}(B) \cap {\Sp}_{2n}(B, tB)$. For $I \in \mathbb{M}^{(n)},$
		$$\delta_{I}\alpha{\delta_{I}}^{-1} \text{ and } {~\delta_{I}}^{-1}\alpha{\delta_{I}} \in {\ESp}_{2n}(B).$$
	\end{lemma}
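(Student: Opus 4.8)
The plan is to reduce the statement to a computation with the generators of $\ESp_{2n}(B)$, using the fact that $\alpha$ lies in the congruence subgroup $\Sp_{2n}(B,tB)$. Write $\alpha = \prod_{\ell} \varepsilon_\ell$ as a product of elementary symplectic generators $e_{i\sigma(i)}(\lambda)$ and $se_{ij}(\lambda)$ with entries $\lambda \in B$. Conjugating by $\delta_I$ turns each generator into $\delta_I \varepsilon_\ell \delta_I^{-1}$, and the effect on an entry in position $(p,q)$ is multiplication by $d_p d_q^{-1}$, which is either $1$, $t$, or $t^{-1}$ depending on whether $p,q \in I$. The generators for which some entry gets multiplied by $t$ remain elementary symplectic over $B$ (since $t \in B$ and multiplying the parameter $\lambda$ by $t$ again lands in $B$); the trouble is exactly the generators where an entry gets multiplied by $t^{-1}$, which a priori only gives a matrix over $C = R[\mathcal{N}^{-1}M]_\m$, not over $B$. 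So the naive term-by-term argument fails, and the congruence condition $\alpha \in \Sp_{2n}(B,tB)$ must be exploited globally rather than factor-by-factor.

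The approach I would take is the standard Quillen-style patching / Mennicke-type manipulation. Since $\alpha \equiv \mathrm{Id}_{2n} \bmod tB$, the conjugate $\delta_I \alpha \delta_I^{-1}$, computed in $\GL_{2n}(C)$, actually has all its entries back in $B$: an off-diagonal entry of $\alpha$ in a ``bad'' position $(p,q)$ (one that would be divided by $t$) is divisible by $t$ because $\alpha \in \Sp_{2n}(B,tB)$, so $t^{-1}$ times it is still in $B$; the diagonal entries are congruent to $1$ mod $tB$, and the correction terms are again divisible by $t$. Hence $\delta_I\alpha\delta_I^{-1} \in \Sp_{2n}(B)$ — but we need it in the \emph{elementary} subgroup, which is the real content. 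For that I would use the decomposition of $\alpha$ and push the problematic generators around: conjugate in stages, using commutator identities among the $se_{ij}$ and $e_{i\sigma(i)}$ (the symplectic Steinberg relations) to rewrite $\delta_I\alpha\delta_I^{-1}$ as a product in which every factor is manifestly in $\ESp_{2n}(B)$, absorbing the $t^{-1}$ factors against the $t$-divisibility coming from the congruence condition. The hypothesis $n \geq 2$ is what makes enough Steinberg relations available (in $\ESp_2$ there are essentially no commutator relations to exploit), so this is where that assumption enters.

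Concretely, I expect the cleanest route is: first handle the case where $I$ is a single ``block'' $\{i\}$ with $\sigma(i)\notin I$ (or its complement), reducing a general $I \in \mathbb{M}^{(n)}$ to an iterated product of such elementary conjugations since $\delta_I$ is a product of the corresponding diagonal matrices $\delta_{\{i\}}$; then for a single such conjugation, split $\alpha$'s generators into those stabilized, those scaled by $t$ (harmless), and those scaled by $t^{-1}$, and for the last class use the identity $se_{pq}(\mu t^{-1}) = $ (something in $\ESp_{2n}(B)$) valid precisely when the matrix already sits in $\Sp_{2n}(B)$ — this is where the congruence hypothesis $\alpha \in \Sp_{2n}(B,tB)$ is consumed, guaranteeing the parameters that pick up $t^{-1}$ were divisible by $t$ to begin with. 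The symmetric statement $\delta_I^{-1}\alpha\delta_I \in \ESp_{2n}(B)$ follows by replacing $I$ with its complement $\sigma$-swap, since $\delta_I^{-1} = \delta_{I'}$ up to a central/elementary factor, or simply by running the same argument with the roles of $t$ and $t^{-1}$ interchanged and noting $\mathcal{N}$-divisibility still holds. The main obstacle, as flagged, is making the term-by-term rewriting actually close up inside $\ESp_{2n}(B)$ rather than merely $\ESp_{2n}(C)$: that is exactly the point where one cannot be cavalier and must invoke $\alpha \in \Sp_{2n}(B,tB)$ together with the symplectic Steinberg relations.
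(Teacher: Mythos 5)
Your diagnosis of the difficulty is right --- term-by-term conjugation of an arbitrary factorization of $\alpha$ fails because the factors picking up $t^{-1}$ need not individually be congruent to the identity mod $tB$, even though $\alpha$ is --- but your proposal does not actually supply a mechanism that overcomes it. The step ``for the last class use the identity $se_{pq}(\mu t^{-1}) = $ (something in $\ESp_{2n}(B)$) valid precisely when the matrix already sits in $\Sp_{2n}(B)$'' is not a real identity: a single factor $se_{pq}(\mu t^{-1})$ with $\mu\in B$ not divisible by $t$ simply does not lie in $\Sp_{2n}(B)$, and no Steinberg relation repairs an individual factor; the congruence hypothesis constrains only the product. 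Likewise, showing that $\delta_I\alpha\delta_I^{-1}$ has entries in $B$ (which you do correctly) gives membership in $\Sp_{2n}(B)$ but says nothing about elementarity, which, as you note, is the whole content.

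The ingredient you are missing is the \emph{polarized} structure of $M$, which is what the paper leans on. Polarization gives $\pi(A)=\pi(B)$ for $\pi:B\to B/tB$, i.e.\ every generator parameter splits as $g_k=a_k+tf_k$ with $a_k\in A$ and $f_k\in B$. Writing $\alpha=\bigl(\prod_k\gamma_k\,se_{i_kj_k}(tf_k)\,\gamma_k^{-1}\bigr)\gamma_m$ with $\gamma_p=\prod_{k\le p}se_{i_kj_k}(a_k)\in \ESp_{2n}(A)$ localizes all the $t$-divisibility into explicit rank-$\le 2$ symplectic transvections $\mathrm{Id}+tf'v\widetilde{v}$ or $\mathrm{Id}+tf'(v\widetilde{w}+w\widetilde{v})$; conjugating these by $\delta_I$ converts the $t$ into $\delta_{\sigma(I)}$ and the result is again a transvection of the same shape over $B$, hence elementary by Kopeiko's Lemmas 1.5 and 1.10 --- no Steinberg commutator gymnastics, and this (together with the block reduction below) is where $n\ge 2$ actually enters. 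The congruence hypothesis is then consumed not factor-by-factor but on the single residual factor $\gamma_m$: since all the other factors are $\equiv \mathrm{Id}\bmod tB$, so is $\gamma_m$, so its entries have the form $\delta_{ij}+ta_{ij}$ and $\delta_I\gamma_m\delta_I^{-1}$ again has entries in $B$; one then uses locality of $A$ (diagonal entries are units) to reduce $\delta_I\gamma_m\delta_I^{-1}$ by explicit elementary column operations, block by block, down to a diagonal matrix in $\Sp_{2n}(A)=\ESp_{2n}(A)$. Without the decomposition $B=A+tB$ coming from polarization, your regrouping has no way to close up inside $\ESp_{2n}(B)$.
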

	
	\begin{proof}
		The proof will focus on showing ${\delta_{I}}\alpha{\delta_{I}}^{-1} \in \Ep_{2n}(B)$ (the other part will follow using $t{\delta_{I}}^{-1}=\delta_{\sigma(I)},$ where $\sigma(I) = I^c \cap \{1, \ldots, 2n\} \in \mathbb{M}^{(n)}$). 
		We use the property of $M$ being polarized to write the decomposition:
		$$ \alpha = \prod\limits_{k=1}^m se_{{i_k}{j_k}}(a_k+tf_k), $$
		where $a_k \in A$ and $f_k \in B$. For $1 \leq p \leq m,$ define $\gamma_p =  \prod\limits_{k=1}^p se_{{i_k}{j_k}}(a_k) \in \Ep_{2n}(A).$
		This helps us rewrite $\alpha$ as
		$$\alpha = \Bigg( \prod\limits_{k=1}^m \gamma_k se_{{i_k}{j_k}}(tf_k) \gamma_k^{-1}\Bigg) \gamma_m $$
		Thus to show $\delta_{I}\alpha{\delta_I}^{-1} \in {\Sp}_{2n}(B),$ it is sufficient to show that 
		$$ \delta_{I}\big(\gamma_kse_{{i_k}{j_k}}(tf_k)\gamma_k^{-1}\big){\delta_I}^{-1}\text{ and } \delta_{I}\gamma_m{\delta_I}^{-1}  \in \Ep_{2n}(B).$$ 
		
		We break this proof into two parts and forgo writing the index $k$ for the sake of simplicity. Let 
		$$\alpha_1 = \delta_{I}\big(\gamma se_{{i}{j}}(tf)\gamma^{-1}\big){\delta_I}^{-1}$$
		and $v$ and $w$ be the $i$'th  and $\sigma(j)$'th column of $\gamma$. Define $v_1,w_1 \in (A)^{2n}$ as $v_1= \delta_Iv$ and  $w_1= \delta_Iw$. Let $(-1)^{\sigma(j)}f = f'$. If we assume $\sigma(j)=i,$ then have $\gamma se_{{i}{j}}(f)\gamma^{-1} = {\I}_{2n} +(-1)^{i}tfv\widetilde{v} =  {\I}_{2n} +tf'v\widetilde{v}$ and	
		\begin{align*}
			\hspace{5cm}	\alpha_1 &= \delta_I({\I}_{2n} + {tf'}v\widetilde{v}){\delta_{I}}^{-1}\\
			&= {{\I}}_{2n} + f'\delta_Iv{v}^T\psi_n({t\delta_{I}^{-1}})\\
			&= {\I}_{2n} + f'\delta_Iv{v}^T(\psi_n{\delta_{\sigma(I)}})\\
			&= {\I}_{2n} + f'v_1{v}^T\delta_I\psi_n\\
			&= {\I}_{2n} + f'v_1\widetilde{v_1}.
		\end{align*}
		This gives $\alpha_1 \in \Ep_{2n}(A)$ by (\cite{Kop-MR497932}, Lemma 1.5). Else $\gamma se_{{i}{j}}(f')\gamma^{-1} = {\I}_{2n} +tf'(v\widetilde{w}+w\widetilde{v})$. Then
		\begin{align*}
			\hspace{5cm}	\alpha_1 &= \delta_I({\I}_{2n} + {tf'}(v\widetilde{w}+w\widetilde{v})){\delta_{I}}^{-1}\\
			&= {\I}_{2n}+{f'}(\delta_Iv{w}^T+\delta_Iw{v}^T)\psi_n{\delta_{\sigma(I)}}\\
			&= {\I}_{2n} + {f'}(v_1{w}^T+w_1v^T)\delta_I\psi_n\\
			&= {\I}_{2n} + {f'}(v_1\widetilde{w}_1+w_1\widetilde{v_1}).
		\end{align*}
		\noindent Since $\gamma \in  \Ep_{2n}(A),$ therefore $v$  being a column of $\gamma$ implies $v \in {\Um}_{2n}(A)$. Then \begin{align*}
			\hspace{5cm}(f'\widetilde{w})\cdot v &= f'w^T\psi_n\gamma e_i\\
			&= f'(\gamma^{-1}w)^T\psi_ne_i\\
			&=f'e_{\sigma(j)}^T\psi_ne_i\\
			&=0.
		\end{align*}
		Therefore by (\cite{Kop-MR497932}, Lemma 1.10),  $\alpha_1 \in \Ep_{2n}(A)$. 
		\newline
		
		For a proof of the second part, observe that for any positive face $\tau$ of $\Gamma$ and $\m(\tau) = \m \cap R[M(\tau)] \in \Max(R[M(\tau)]),$ we have the inherited retraction:
		$$R[M(\tau)]_{\m(\tau)} \hookrightarrow B \xrightarrow{\pi_{\tau}} R[M(\tau)]_{\m(\tau)}, $$
		where $\pi_{\tau}\mid_{M(\tau)} = {\I}_{M(\tau)},$ and $\pi_{\tau}(x)=0$ if $x \notin M(\tau)$. Since $t$ is the generator of the $M$-submonoid $\mathcal{N}=M(\{P\}),$ owing to the relation shared by $P$ and $\Gamma$ we may infer
		$\pi_{\tau}(\gamma_m) = {\I}_{2n}$ for any positive face $\tau$ of $\Gamma$. Thus
		\begin{align*}
			\hspace{5cm} \gamma_m &\in \Ep_{2n}(A) \cap {\Sp}_{2n}(A, tB \cap A) \\
			& \subseteq \Ep_{2n}(A) \cap {\Sp}_{2n}(B, tB).
		\end{align*}
		Let  $$\gamma_m = \begin{pmatrix} 
			1 + ta_{11} & ta_{12} & \dots & ta_{1(2n)}\\
			ta_{12} & 1+ta_{22} & \ldots & ta_{2(2n)}\\
			\vdots & \ddots &  \ldots & \vdots\\
			ta_{(2n)1} & ta_{(2n)2} & \ldots  & 1+ta_{(2n)(2n)}
		\end{pmatrix},
		$$
		where $a_{ij} \in A$ for all $i,j$ and $ta_{ij} \in A$. Since $t$ is not invertible in $B,$ $ta_{ij} \in \m A,$ which makes the diagonals units in $A$. Let $\beta = \delta_I\gamma_m{\delta_I}^{-1}$. Then \begin{equation}\label{lab1}
			\hspace{5cm}\beta_{ij}= \left\{
			\begin{array}{ll}
				t(\gamma_m)_{ij} & i, \sigma(j) \in I, \\
				(1/t)(\gamma_m)_{ij} & \sigma(i), j \in I, \\
				(\gamma_m)_{ij} & {\rm else}.\\
			\end{array} 
			\right. 
		\end{equation} 
		Without loss of generality we may assume $2n \in I$. Since $A$ is local, there exists $\Delta_n \in \Ep_{2n}(A)$ such that $\Delta_n(\beta e_{2n}) = (1+a_{(2n)(2n)})e_{2n}$. Therefore we may assume the last column of $\beta$ is $(1+a_{(2n)(2n)})e_{2n}$. Next we prove $\beta \in {\Sp}_{2n}(B)$.
		To prove this we use the two relations: $t{\delta_I}^{-1}=\delta_{\sigma(I)}$ and $\psi_n\delta_{\sigma(I)} = \delta_{I}\psi_n$. Observe,
		\begin{align*}
			\hspace{5cm}\beta^T\psi_n\beta & = ({\delta_I\gamma_m{\delta_I}^{-1}})^T\psi_n(\delta_I\gamma_m{\delta_I}^{-1})\\
			& = {\delta_I}^{-1}\gamma_m^T{\delta_I}\psi_n(\delta_I\gamma_m{\delta_I}^{-1})\\
			& = {\delta_I}^{-1}\gamma_m^T (t{\delta_{\sigma(I)}}^{-1})(\delta_{\sigma(I)}\psi_n)\gamma_m{\delta_I}^{-1}\\
			& =t({\delta_I}^{-1}\psi_n{\delta_I}^{-1}) \\
			& =\psi_n.
		\end{align*}
		Let $a_i = 1+a_{(2i)(2i)} \in A$ for $1 \leq i \leq n$. Thus we may further say, 
		$$\beta = \begin{pmatrix} 
			\beta^{(n-1)} & x & \textbf{0}\\
			\textbf{0}^T    & a_n^{-1} & 0\\
			y^T & t^2a_{(2n)(2n-1)}  & a_n 
		\end{pmatrix},
		$$
		where $x, y \in B^n$ and $\beta^{(n-1)} \in {\Sp}_{2n-2}(B)$ is of the same form as $\beta$, \textit{i.e.}, $\beta_{ij}^{(n-1)}$'s have the same property corresponding to the set $I'=I \smallsetminus \{2n-1, 2n\} \in \mathbb{M}^{(n-1)},$ just like $\beta_{ij}$'s have in Equation (\ref{lab1}). 
		Then by  (\cite{Basuinjective-MR2578583}, Lemma 3.6), there exists $\Delta_{n-1} \in \Ep_{2n}(B)$ such that 
		$$\beta = \Delta_{n-1}\begin{pmatrix} 
			\beta^{(n-1)} & \textbf{0} & \textbf{0}\\
			\textbf{0}^T & a_n^{-1} & 0\\
			\textbf{0}^T & 0  & a_n
		\end{pmatrix}.$$
		Proceeding as before, by annihilating the off diagonal elements in decreasing order of the column index, we reach the index $i=2$. At $i=2,$ there exists $\Delta_1 \in \Ep_{2n}(B)$ and $\beta^{(1)} \in {\Sp}_2(B)$ such that
		$$\beta = \Delta_1 \begin{pmatrix} 
			\beta^{(1)} & \textbf{0} & \textbf{0} & \ldots & \textbf{0} & \textbf{0} \\
			\textbf{0}^T & a_2^{-1}   & 0 & \ldots & 0 & 0\\
			\textbf{0}^T & 0 & a_{2}   & \ldots    &  0 & 0\\
			\vdots & \vdots &  \vdots &  \ddots & \vdots & \vdots\\
			\textbf{0}^T & 0 & 0 & \ldots & a_n^{-1} & 0 \\
			\textbf{0}^T & 0 & 0  & \ldots & 0 & a_n
		\end{pmatrix}.$$
		As the diagonal elements of $\beta^{(1)}$ are units, therefore we may again reduce it to a diagonal matrix by right and left multiplication by elements of $\Ep_2(B)$. Since $a_i \in A$'s are units for all $i$, we get 
		$$\diag(a_1^{-1}, a_1, \ldots, a_n^{-1}, a_n) \in {\Sp}_{2n}(A) = \Ep_{2n}(A).$$
		Thus we may conclude $ \delta_I\gamma_m{\delta_I}^{-1} = \beta \in \Ep_{2n}(B)$. 
	\end{proof}
	
	Let $\Lambda$ be an ideal of $R$. By $sD(\Lambda)$ denote the diagonal matrices of ${\Sp}_{2n}(\Lambda)$. For $I \in \mathbb{M}^{(n)},$ we define $sI(\Lambda) = \{se_{ij}(\lambda) \in \Ep_{2n}(R) \mid i \in I, \lambda \in \Lambda \}$ and
	$s\widetilde{I}(\Lambda) = sI(\Lambda)\cdot sD(\Lambda) = sD(\Lambda)\cdot sI(\Lambda)$. With these notations, we fix $J \in \mathbb{M}^{(n)}$ and define
	$$sV:= \Ep_{2n}(B)\cdot s\widetilde{J}(C)\cdot \Ep_{2n}(C,\m C).\\$$ 
	
	For $n \geq 2,$ $1 \leq i \neq j\leq 2n$ and $u$ unit in $R$ define
	$$sw_{ij}(u)=se_{ij}(u)se_{\sigma(i)\sigma(j)}((-1)^{i+i}u^{-1})se_{ij}(u).$$
	Denote by $sW(R)$ the subgroup of $\Ep_{2n}(R)$ generated by these $sw_{ij}(\cdot)$'s. The reader may observe that elements of $sW(R)$ are monomial matrices. The following lemma gives us a way to tweak the results that we obtain for the ring $B$ to that of $B_-$. The proof of the following is straightforward.
	\begin{lemma}\label{l2}
		Let $I \in \mathbb{M}^{(n)},$ $c \in C$ and $1 \leq i \neq j \leq 2n$. Then
		$$ 
		\delta_Ise_{ij}(c){\delta_I}^{-1}= \left\{
		\begin{array}{ll}
			se_{ij}(tc) & i, \sigma(j) \in I, \\
			se_{ij}(t^{-1}c) & \sigma(i),j \in I,\\
			se_{ij}(c) & {\rm else}.\\
		\end{array} 
		\right. 
		$$
	\end{lemma}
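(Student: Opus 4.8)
The plan is to establish the formula by a single direct conjugation computation, the only real input being the defining combinatorics of $\mathbb{M}^{(n)}$. First I would recall that $\delta_I = \diag(d_1,\dots,d_{2n})$ with $d_k = t$ for $k \in I$ and $d_k = 1$ otherwise, and that $se_{ij}(c) = {\I}_{2n} + c\,e_{ij} - c(-1)^{i+j}e_{\sigma(j)\sigma(i)}$. Since conjugation by a diagonal matrix rescales each matrix unit, $\delta_I e_{kl}\delta_I^{-1} = (d_k/d_l)\,e_{kl}$, this gives at once
\[
\delta_I se_{ij}(c)\delta_I^{-1} = {\I}_{2n} + \tfrac{d_i}{d_j}\,c\,e_{ij} - (-1)^{i+j}\,\tfrac{d_{\sigma(j)}}{d_{\sigma(i)}}\,c\,e_{\sigma(j)\sigma(i)}.
\]
Note that $t$ is a unit in $C$, so $\delta_I$ is invertible over $C$ and the expression makes sense; one may also observe that $\delta_I$ itself is not symplectic but a symplectic similitude with multiplier $t$, which is harmless here.

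The key step is to recognise the right-hand side as a single elementary symplectic generator. For this I need the two rescaling factors to coincide, i.e.\ $d_i/d_j = d_{\sigma(j)}/d_{\sigma(i)}$, equivalently $d_i d_{\sigma(i)} = d_j d_{\sigma(j)}$. This is exactly where $I \in \mathbb{M}^{(n)}$ enters: for every index $k$ precisely one of $k$ and $\sigma(k)$ lies in $I$, hence $d_k d_{\sigma(k)} = t$ for all $k$, and both products above equal $t$. Consequently the conjugate equals $se_{ij}(\mu)$ with $\mu = (d_i/d_j)\,c$.

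It then remains to read off $\mu$ according to whether $i$ or $\sigma(i)$ lies in $I$ and whether $j$ or $\sigma(j)$ lies in $I$, which are the only four possibilities: if $i,j \in I$ or $\sigma(i),\sigma(j) \in I$ then $d_i = d_j$ and $\mu = c$; if $i,\sigma(j) \in I$ then $d_i = t$, $d_j = 1$ and $\mu = tc$; if $\sigma(i),j \in I$ then $d_i = 1$, $d_j = t$ and $\mu = t^{-1}c$ (legitimate since $t$ is a unit in $C$). This reproduces the asserted case distinction. I do not expect any genuine obstacle here — the computation is routine, as the paper indicates — the only point needing care is the invocation of the $\mathbb{M}^{(n)}$ condition, which is precisely what forces the conjugate back into $\Ep_{2n}(C)$ rather than merely into $\GL_{2n}(C)$.
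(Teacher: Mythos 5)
Your computation is correct and is exactly the routine verification the paper has in mind (it omits the proof as ``straightforward''): conjugation by $\delta_I$ rescales $e_{ij}$ by $d_i/d_j$ and $e_{\sigma(j)\sigma(i)}$ by $d_{\sigma(j)}/d_{\sigma(i)}$, and the defining property of $\mathbb{M}^{(n)}$ — read, as the rest of the paper requires, as ``exactly one of $k,\sigma(k)$ lies in $I$ for every $k$'' — gives $d_kd_{\sigma(k)}=t$ for all $k$, forcing the two rescaling factors to coincide so that the conjugate is again a single generator $se_{ij}(\mu)$ with $\mu=(d_i/d_j)c$. Your four-way case check then reproduces the stated formula, so the argument is complete.
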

	
	\begin{lemma}\label{l4}
		Let $n \geq 2$ and $I \in \mathbb{M}^{(n)}.$ Then $\delta_I{\ESp}_{2n}(B){\delta_I }^{-1} \subseteq sV$.
	\end{lemma}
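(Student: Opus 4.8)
The plan is to reduce the containment $\delta_I {\ESp}_{2n}(B) {\delta_I}^{-1} \subseteq sV$ to checking it on a set of generators of ${\ESp}_{2n}(B)$, namely the elementary symplectic matrices $se_{pq}(b)$ with $b \in B$. Since conjugation by $\delta_I$ is a group automorphism of ${\GL}_{2n}(C)$ and $sV$ is — as one checks directly from its definition as a product of a subgroup with a set and another subgroup — closed under left multiplication by ${\ESp}_{2n}(B)$ and under right multiplication by ${\ESp}_{2n}(C, \m C)$, it will suffice to prove $\delta_I se_{pq}(b) {\delta_I}^{-1} \in sV$ for each generator. First I would observe that $sV$ contains ${\ESp}_{2n}(B)$ itself (take the identity from $s\widetilde{J}(C)$ and from ${\ESp}_{2n}(C, \m C)$), so the problem is genuinely about how $\delta_I$ distorts a single generator.

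Next I would invoke Lemma \ref{l2}, which computes $\delta_I se_{pq}(b) {\delta_I}^{-1}$ explicitly: it is $se_{pq}(tb)$, $se_{pq}(t^{-1}b)$, or $se_{pq}(b)$ according to the three cases on whether $p,\sigma(q) \in I$, $\sigma(p), q \in I$, or neither. In the first and third cases the result is $se_{pq}(tb)$ or $se_{pq}(b)$, which lies in ${\ESp}_{2n}(B)$ since $t, b \in B$ — hence in $sV$, and we are done for those generators. The only problematic case is the middle one, where we pick up a $t^{-1}$ and land in $se_{pq}(t^{-1}b) \in {\ESp}_{2n}(C)$ but possibly not in ${\ESp}_{2n}(B)$. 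Here I would use the decomposition of $b$ coming from $M$ being polarized, writing $b = a + tf$ with $a \in A$ and $f \in B$; then $se_{pq}(t^{-1}b) = se_{pq}(t^{-1}a) \, se_{pq}(f)$, the second factor already lying in ${\ESp}_{2n}(B)$, so it remains to absorb $se_{pq}(t^{-1}a)$ with $a \in A$. Since $t^{-1}a \in \mathcal{N}^{-1}M \subseteq C$ localized at $\m$, and more to the point since $t^{-1}a$ lies in the relevant subring, I would argue that $se_{pq}(t^{-1}a)$ can be rewritten — after conjugating by a suitable monomial/permutation element of $sW(B) \subseteq {\ESp}_{2n}(B)$ that moves the index $p$ into the fixed set $J$ (using $n \geq 2$ to have enough room) — into an element of $s\widetilde{J}(C)$, possibly times an element of ${\ESp}_{2n}(C, \m C)$ to handle the part of $t^{-1}a$ lying in $\m C$.

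The step I expect to be the main obstacle is precisely this last absorption: showing that after conjugation one can place the "bad" generator $se_{pq}(t^{-1}a)$ inside the prescribed factor $s\widetilde{J}(C) = sI(C)\cdot sD(C)$ for the \emph{fixed} choice of $J$, rather than for an index set depending on $(p,q)$. This requires using the monomial generators $sw_{ij}(u)$ of $sW(B)$ (with $u$ a unit of $B$, e.g. $u=1$) to conjugate $se_{pq}$ to $se_{p'q'}$ with $p' \in J$, checking that such conjugations stay inside ${\ESp}_{2n}(B)$ and only permute indices in a way compatible with $\sigma$, and then verifying the bookkeeping that the leftover diagonal contribution lands in $sD(C)$ and any error term lands in the normal subgroup ${\ESp}_{2n}(C, \m C)$. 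The commutator calculus here — tracking which $se_{ij}$'s survive conjugation by $\delta_I$ and by monomial matrices — is routine in spirit but is where all the case analysis on $\mathbb{M}^{(n)}$ lives; once that is in hand, the three-case split from Lemma \ref{l2} closes the proof.
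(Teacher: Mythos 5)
There is a genuine gap, and it occurs at the very first step. You reduce the containment $\delta_I\,{\ESp}_{2n}(B)\,{\delta_I}^{-1}\subseteq sV$ to checking it on the generators $se_{pq}(b)$, justifying this by noting that $sV$ is closed under left multiplication by ${\ESp}_{2n}(B)$ and right multiplication by ${\ESp}_{2n}(C,\m C)$. That is not enough: $sV={\ESp}_{2n}(B)\cdot s\widetilde{J}(C)\cdot{\ESp}_{2n}(C,\m C)$ is a product of subsets and is not known to be closed under multiplication at this stage (that $sV$ is in fact the group ${\ESp}_{2n}(C)$ is precisely Corollary \ref{c2}, which is deduced \emph{from} Lemma \ref{l4}). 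Knowing $\delta_I se_{p_1q_1}(b_1)\delta_I^{-1}\in sV$ and $\delta_I se_{p_2q_2}(b_2)\delta_I^{-1}\in sV$ puts their product in $sV\cdot sV$, and there is no mechanism in your argument to push the middle factors $s\widetilde{J}(C)\cdot{\ESp}_{2n}(C,\m C)\cdot{\ESp}_{2n}(B)$ back into the prescribed order. The paper avoids this entirely by decomposing an arbitrary $\alpha\in{\ESp}_{2n}(B)$ globally: first, using the polarization (so that $A\twoheadrightarrow B/tB$) and Lemma \ref{l1}, one reduces to $\alpha\in{\ESp}_{2n}(A)$; then one applies a Bruhat-type decomposition over the residue field $k=A/\m$ (Kopeiko, Lemma 2.4 and Corollary 2.6) and lifts it to write $\alpha=\alpha_1\alpha_2\alpha_3\alpha_4$ with $\alpha_1\in sI(A)$, $\alpha_2\in sW(A)$, $\alpha_3\in sJ(A)$, $\alpha_4\in{\ESp}_{2n}(A,\m)$. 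Each of these four factors conjugates by $\delta_I$ into the correct slot of $sV$ (for $\alpha_1$ one uses that $i\in I$ forces $\sigma(i)\notin I$, so the bad case of Lemma \ref{l2} never occurs; for $\alpha_2$ the conjugate monomial matrix splits as an element of ${\ESp}_{2n}(B)$ times a diagonal in $sD(C)$). Your proposal uses neither Lemma \ref{l1} nor the residue-field decomposition, which are the actual content of the proof.

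Even granting the reduction to generators, your treatment of the problematic case $\sigma(p),q\in I$ does not close. Writing $b=a+tf$ and isolating $se_{pq}(t^{-1}a)$ is fine, but conjugating by a monomial matrix $w\in sW(B)$ to move $p$ into $J$ produces $se_{pq}(t^{-1}a)=w^{-1}se_{p'q'}(\pm t^{-1}a)\,w$, leaving a factor $w\in{\ESp}_{2n}(B)$ on the \emph{right}, where $sV$ only permits ${\ESp}_{2n}(C,\m C)$; and since $a\in A$ is arbitrary (not in $\m$), there is no reason the leftover error term lies in the congruence subgroup ${\ESp}_{2n}(C,\m C)$. The residue-field step in the paper is exactly what separates the part of $\alpha$ that is trivial modulo $\m$ (which goes into ${\ESp}_{2n}(C,\m C)$ via Lemma \ref{l2}) from the part that must be organized into $sI\cdot sW\cdot sJ$ over $k$ before lifting.
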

	
	\begin{proof}
		Let $\alpha \in \Ep_{2n}(B)$. Consider the following composition of maps:
		$$A \hookrightarrow B \xrightarrow{\pi} B/tB,$$
		where the inclusion is natural and $\pi$ is the canonical surjection. Since $M$ is polarized, one gets $\pi(A)=\pi(B)$. Therefore we may choose a lift $\alpha_0 \in \Ep_{2n}(A)$ of $\bar{\alpha} \in B/tB$ such that $\alpha_{0} \equiv \alpha$ mod $tB$. Then $\alpha = \alpha'\alpha_0,$ for some $\alpha' \equiv {\I}_{2n}$ mod $tB$. By Lemma \ref{l1}, we may assume that $\alpha \in \Ep_{2n}(A)$. Denote by $\phi,$ the canonical homomorphism $\phi:A \rightarrow A/\m = k$. By Lemma 2.4 and Corollary 2.6 of \cite{Kop-MR497932}, we may decompose $\phi(\alpha)$ as $\phi(\alpha) = \beta_1\beta_2\beta_3,$
		where $\beta_1 \in sI(k),$ $\beta_2 \in sW(k)$ and $\beta_3 \in sJ(k)$. We may lift these $\beta_i$'s to $\alpha_i$'s by again applying (\cite{Kop-MR497932}, Lemma 2.4). Therefore we may write $\alpha$ as
		$$\alpha=\alpha_1\alpha_2\alpha_3\alpha_4,$$
		where $\alpha_1 \in sI(A),$ $\alpha_2 \in sW(A),$ $\alpha_3 \in sJ(A)$ and $\alpha_4 \in {\Sp}_{2n}(A, \m) = \Ep_{2n}(A, \m)$ (Since $A$ is local). By Lemma \ref{l2}, $\delta_I\alpha_4{\delta_I }^{-1} \subseteq \Ep_{2n}(C,\m C)$. As $\delta_I\alpha_2{\delta_I}^{-1} \in sW(C),$ there exists ${\bf\Pi} \in \Ep_{2n}(B)$ and $\Delta \in sD(C)$ such that,  $\delta_I\alpha_2{\delta_I}^{-1}={\bf{\Pi}}\Delta$. Then
		$$\hspace{4cm}\delta_I\alpha{\delta_I }^{-1} = \big(\delta_I\alpha_1{\delta_I }^{-1}{\bf\Pi}\big)\big(\Delta\delta_I\alpha_3{\delta_I }^{-1}\big)\big(\delta_I\alpha_4{\delta_I }^{-1}\big)\in sV. \hspace{2.2cm}\qedhere$$
	\end{proof}
	
	Note that by the normality of $\Ep_{2n}(C,\m)$ in ${\Sp}_{2n}(C, \m)$ (\cite{Kop-MR497932}, Corollary 1.11) and Lemma \ref{l2}, one may conclude that $\delta_IsV{\delta_I }^{-1} \subseteq sV.$
	
	\begin{corollary}\label{c2}
		Let $n \geq 2$. Then ${\ESp}_{2n}(C) = sV.$ 
	\end{corollary}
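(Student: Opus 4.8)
The plan is to prove the two inclusions of ${\ESp}_{2n}(C)=sV$ separately. The inclusion $sV\subseteq{\ESp}_{2n}(C)$ is immediate: $\Ep_{2n}(B)\subseteq{\ESp}_{2n}(C)$ because $B\subseteq C$, $\Ep_{2n}(C,\m C)\subseteq{\ESp}_{2n}(C)$ by definition, and $s\widetilde{J}(C)=sJ(C)\cdot sD(C)$ lies in ${\ESp}_{2n}(C)$ since $sJ(C)$ is generated by matrices $se_{ij}(\cdot)$ and, as $n\ge 2$, every diagonal matrix of $\Sp_{2n}(C)$ is a product of the monomial matrices $sw_{ij}(\cdot)\in{\ESp}_{2n}(C)$. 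The substance is the reverse inclusion ${\ESp}_{2n}(C)\subseteq sV$, and I would not attempt to prove that $sV$ is a subgroup directly; instead I would prove the one-sided absorption ${\ESp}_{2n}(C)\cdot sV\subseteq sV$, from which ${\ESp}_{2n}(C)={\ESp}_{2n}(C)\cdot{\rm Id}_{2n}\subseteq sV$ follows because ${\rm Id}_{2n}\in sV$.

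Since ${\ESp}_{2n}(C)$ is generated by the elements $se_{ij}(c)$ and $e_{i\sigma(i)}(c)$ with $c\in C$, and a word in these can be peeled off one letter at a time, it suffices to show $g\cdot sV\subseteq sV$ for each such $g$. First I would record two facts. Because $\mathcal{N}=\langle t\rangle$ one has $R[\mathcal{N}^{-1}M]=R[M][t^{-1}]$, hence $C=B[t^{-1}]$, so every parameter $c$ can be written as $t^{-r}b$ with $b\in B$ and $r\ge 0$. And by the remark following Lemma~\ref{l4}, $\delta_I sV\delta_I^{-1}\subseteq sV$ for every $I\in\mathbb{M}^{(n)}$; applying this to $\sigma(I)\in\mathbb{M}^{(n)}$ and using $\delta_{\sigma(I)}=t\,\delta_I^{-1}$ together with the centrality of scalar matrices gives also $\delta_I^{-1}sV\delta_I\subseteq sV$, whence $\delta_I^{\pm r}sV\delta_I^{\mp r}\subseteq sV$ for all $r\ge 0$; in particular $\delta_I^{-r}sV\subseteq sV\,\delta_I^{-r}$.

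Now fix $g=se_{ij}(t^{-r}b)$ (the long-root case $g=e_{i\sigma(i)}(t^{-r}b)$ is identical, conjugation by $\delta_I$ scaling its parameter by a power of $t$ just as in Lemma~\ref{l2}). If $r=0$, i.e. $c\in B$, then $g\in\Ep_{2n}(B)$, and since $\Ep_{2n}(B)$ is the leftmost subgroup-factor of $sV$ we get $g\cdot sV=sV$. Otherwise I would choose $I\in\mathbb{M}^{(n)}$ with $\sigma(i),j\in I$, which is possible because the $\sigma$-pairs $\{i,\sigma(i)\}$ and $\{j,\sigma(j)\}$ are disjoint ($j\ne i$ and $j\ne\sigma(i)$ for a generator $se_{ij}$); iterating Lemma~\ref{l2} then yields $g=\delta_I^{\,r}g_0\delta_I^{-r}$ with $g_0=se_{ij}(b)\in\Ep_{2n}(B)$, and
$$g\cdot sV=\delta_I^{\,r}g_0\,(\delta_I^{-r}sV)\subseteq\delta_I^{\,r}g_0\,(sV\,\delta_I^{-r})=\delta_I^{\,r}(g_0\cdot sV)\,\delta_I^{-r}=\delta_I^{\,r}\,sV\,\delta_I^{-r}\subseteq sV,$$
where the middle equality uses $g_0\cdot sV=sV$ and the final inclusion is the case $r\ge 0$ of the second recorded fact. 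This gives ${\ESp}_{2n}(C)\cdot sV\subseteq sV$, hence ${\ESp}_{2n}(C)=sV$.

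The one delicate point is the choice of strategy: trying to show $sV$ is closed under products would force an analysis of conjugates of $se_{ij}(\cdot)$, $i\in J$, by elements of $\Ep_{2n}(B)$, where generators with first index outside $J$ intrude and there is no obvious containment. Proving only the one-sided statement ${\ESp}_{2n}(C)\cdot sV\subseteq sV$ avoids this and reduces everything to the generator computation above, which rests solely on $C=B[t^{-1}]$, Lemma~\ref{l2}, Lemma~\ref{l4}, and the stability $\delta_I^{\pm r}sV\delta_I^{\mp r}\subseteq sV$.
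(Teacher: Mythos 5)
Your proof is correct and takes essentially the same route as the paper's: reduce to the generators of ${\ESp}_{2n}(C)$, absorb those with parameter in $B$ into the factor $\Ep_{2n}(B)$, and handle the $t^{-r}$ ones by writing them as $\delta_I$-conjugates (Lemma \ref{l2}) of elements of $\Ep_{2n}(B)$, then invoking Lemma \ref{l4} together with the remark $\delta_I sV\delta_I^{-1}\subseteq sV$. The one genuine refinement is that you make explicit the one-sided absorption ${\ESp}_{2n}(C)\cdot sV\subseteq sV$ needed to pass from ``the generators lie in $sV$'' to ``the whole group lies in $sV$'' (since $sV$ is not visibly a subgroup), a step the paper compresses into ``the rest follows from Lemma \ref{l4}.''
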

	
	\begin{proof}
		Clearly $sV \subseteq \Ep_{2n}(C)$. As $a$ varies over $A,$ the set $\{se_{ij}(at), se_{ij}(at^{-1}) \mid a \in A\}$ generates $\Ep_{2n}(C)$. Therefore it is sufficient to show that $se_{ij}(at), se_{ij}(at^{-1}) \subseteq sV$. For $a \in A,$ $se_{ij}(at) \in \Ep_{2n}(B)$. By Lemma \ref{l2}, for each $i, j,$ we may choose appropriate $I \in \mathbb{M}^{(n)}$ such that, $ se_{ij}(at^{-1}) = \delta_Ise_{ij}(a)\delta_I^{-1}$. The rest follows from Lemma \ref{l4}.
	\end{proof}
	
	\begin{proposition}\label{p3}
		Let $n \geq 2$. Then 
		$${\ESp}_{2n}(C) \cap {\Sp}_{2n}(C, \m C) \subseteq \big({{\ESp}_{2n}}(B) \cap {{\Sp}_{2n}}(B, \m B)\big)\cdot{{\ESp}_{2n}}(C,\m C).$$
	\end{proposition}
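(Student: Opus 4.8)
The plan is to start from an arbitrary $\alpha \in {\ESp}_{2n}(C) \cap {\Sp}_{2n}(C, \m C)$ and, using Corollary \ref{c2}, write it in the form $\alpha = \epsilon \cdot s \cdot \eta$ with $\epsilon \in {\ESp}_{2n}(B)$, $s \in s\widetilde{J}(C)$ (i.e. a product of an element of $sJ(C)$ and a diagonal symplectic matrix over $C$), and $\eta \in {\ESp}_{2n}(C, \m C)$. The goal is to push all the "$C$-content" (the negative powers of $t$) into the last factor $\eta$ up to an honest $B$-part. The first step is to analyze the diagonal factor: since $\alpha \equiv {\I}_{2n} \bmod \m C$, reducing mod $\m C$ kills $\eta$ and forces the image of $\epsilon s$ to be the identity in ${\Sp}_{2n}(C/\m C)$; because $\epsilon$ comes from $B$ and the diagonal piece of $s$ lies over $C$, a comparison argument (the residue field of $A$, the localization relations $A = B \cap B_-$, and the structure of $C/\m C$) should force the diagonal entries of the $sD(C)$-part to be units of $A$, hence the diagonal factor actually lies in ${\Sp}_{2n}(A) = {\ESp}_{2n}(A) \subseteq {\ESp}_{2n}(B)$, and can be absorbed into $\epsilon$. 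Thus we reduce to $\alpha = \epsilon \cdot \tau \cdot \eta$ with $\tau \in sJ(C)$ a product of $se_{ij}(c)$'s for fixed index set $J$ and $c \in C$.

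Next I would split each generator $se_{ij}(c)$ with $c \in C = R[\mathcal{N}^{-1}M]_\m$ as $c = a + (\text{terms involving negative powers of } t)$ with $a \in A$; more precisely, since $\mathcal{N} = \langle t\rangle$, every element of $C$ is of the form $b/t^r$ with $b \in B$, and using Lemma \ref{l2} (conjugation by $\delta_I$ converts $se_{ij}(tc)$ into $se_{ij}(c)$ on the relevant blocks) together with the identity $\delta_I {\ESp}_{2n}(B) \delta_I^{-1} \subseteq sV$ from Lemma \ref{l4}, one can trade the negative-$t$ part of $\tau$ for a conjugate of a $B$-elementary matrix at the cost of modifying $\eta$ inside ${\ESp}_{2n}(C, \m C)$ (using normality of ${\ESp}_{2n}(C, \m C)$ in ${\Sp}_{2n}(C, \m C)$, cited from Kopeiko after Lemma \ref{l4}). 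The point is that $\tau$ is congruent to ${\I}_{2n}$ modulo the ideal generated by the last columns' data; one extracts from $\tau$ a genuine element of ${\ESp}_{2n}(B)$ and what remains is elementary over $C$ and congruent to the identity mod $\m C$, because the "level" of the correction terms is controlled by $\m$.

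Having absorbed everything except a $B$-elementary factor and a factor in ${\ESp}_{2n}(C, \m C)$, I would conclude $\alpha = \epsilon' \cdot \eta'$ with $\epsilon' \in {\ESp}_{2n}(B)$ and $\eta' \in {\ESp}_{2n}(C, \m C)$; then the hypothesis $\alpha \in {\Sp}_{2n}(C, \m C)$ forces $\epsilon' \in {\Sp}_{2n}(B, \m B)$ as well, since $\eta' \in {\Sp}_{2n}(B, \m B)$ (note $\m B \subseteq \m C$ and $\epsilon' = \alpha \eta'^{-1}$ lies in $B$ and reduces to the identity mod $\m$ after base-changing to $C/\m C$, and $B \to C/\m C$ detects membership in $\m B$ because $B/\m B \hookrightarrow C/\m C$). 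This gives $\epsilon' \in {\ESp}_{2n}(B) \cap {\Sp}_{2n}(B, \m B)$ and hence the desired containment.

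The main obstacle is the middle step: controlling what happens when one clears the denominators $t^{-1}$ in the $sJ(C)$ factor. The conjugation trick via $\delta_I$ only moves the problem around unless one carefully tracks which entries get multiplied by $t$ versus $t^{-1}$ (the three-way case split in Lemma \ref{l2}), and one must ensure the "overflow" genuinely lands in ${\ESp}_{2n}(C, \m C)$ rather than just ${\ESp}_{2n}(C)$ — this is where the constraint that $\alpha$ itself is $\m C$-congruent to the identity, combined with the fact that $A$ is local so that ${\Sp}_{2n}(A, \m) = {\ESp}_{2n}(A, \m)$, has to be invoked to keep the level under control. I expect the proof to proceed by an induction on the number of generators in the $sJ(C)$-factor, peeling off one $se_{ij}(b/t^r)$ at a time and using Lemmas \ref{l1}, \ref{l2}, and \ref{l4} at each stage.
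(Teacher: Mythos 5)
Your opening move (decompose $\alpha=\alpha_1\alpha_2\alpha_3$ via Corollary \ref{c2} and reduce modulo $\m C$) matches the paper, and your closing observation that $B/\m B\hookrightarrow C/\m C$ detects the level of the $B$-factor is also what the paper uses implicitly. But the middle of your argument has a genuine gap, and it is exactly the one you flag yourself. Your plan is to dismantle the $s\widetilde{J}(C)$-factor generator by generator, writing each $se_{ij}(b/t^{r})$ as a $\delta_I$-conjugate of a $B$-elementary matrix and pushing the ``overflow'' into the last factor. Conjugation by $\delta_I$ (Lemma \ref{l2}) only multiplies the parameter by $t^{\pm 1}$; it gives no control over congruence modulo $\m C$, so the correction terms you generate land only in ${\ESp}_{2n}(C)$, not in ${\ESp}_{2n}(C,\m C)$, and nothing in your outline repairs this. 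Likewise your first step --- that the $sD(C)$-part must have diagonal entries that are units of $A$ and so can be absorbed into the $B$-factor --- does not follow: the relation $\phi(\alpha_1)\phi(\alpha_2)={\I}_{2n}$ only constrains the residue of the diagonal factor in $k[X,X^{-1}]$, and an element of $C^{*}$ whose residue lies in $k^{*}$ need not lie in $A$.

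The paper's proof sidesteps all of this with a lifting step that your proposal is missing. Since $\phi(\alpha_1)=\phi(\alpha_2)^{-1}$ lies in ${\Ep}_{2n}(k[X])\cap s\widetilde{J}(k[X,X^{-1}])=s\widetilde{J}(k[X])$, Kopeiko's Lemma 2.4 lifts this residue to an element $\alpha_1'\in s\widetilde{J}(B)$. One then corrects the $B$-factor rather than decomposing the $s\widetilde{J}(C)$-factor: $\alpha_1(\alpha_1')^{-1}\in{\Ep}_{2n}(B)\cap{\Sp}_{2n}(B,\m B)$, while $\alpha_1'\alpha_2$ stays inside $s\widetilde{J}(C)$ and has trivial residue, hence lies in ${\Ep}_{2n}(C,\m C)$ by a second application of the same lemma; the factor $\alpha_3$ is then harmlessly appended. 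The two ideas you are missing are (a) the identification of the intersection ${\Ep}_{2n}(k[X])\cap s\widetilde{J}(k[X,X^{-1}])$ with $s\widetilde{J}(k[X])$, and (b) the use of the lifting property of $s\widetilde{J}$ to transfer the correction from the residue ring back to $B$ and to $C$ at level $\m C$; without them the level-control problem you describe remains unresolved.
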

	
	\begin{proof}
		Let $\alpha \in \Ep_{2n}(C) \cap {\Sp}_{2n}(C, \m C)$. By Corollary \ref{c2}, there exists $\alpha_1 \in \Ep_{2n}(B),$ $\alpha_2 \in s\widetilde{J}(C)$ and $\alpha_3 \in \Ep_{2n}(C, \m C)$ such that 
		$$\alpha = \alpha_1\alpha_2\alpha_3.$$
		Since $M$ is polarized, therefore for an indeterminate $X,$ 
		$$C/\m C \simeq (A/\m A)[X,X^{-1}] = k[X,X^{-1}].$$ 
		Consider the canonical homomorphism $\phi: C \rightarrow C/\m C.$ Then 
		$${\I}_{2n} = \phi(\alpha) = \phi(\alpha_1)\phi(\alpha_2).$$
		Therefore $\phi(\alpha_1) = \phi(\alpha_2)^{-1} \in \Ep_{2n}(k[X]) \cap s\widetilde{J}(k[X,X^{-1}]) = s\widetilde{J}(k[X])$. By Lemma 2.4 of \cite{Kop-MR497932}, there exists $\alpha_1'\in s\widetilde{J}(B),$ such that $\phi(\alpha_1') = \phi(\alpha_1)$.	As ${\alpha_1'}\alpha_2 \in s\widetilde{J}(C)$ satisfying $\phi({\alpha_1'}\alpha_2)={\I}_{2n}$, on again applying Lemma 2.4 of \cite{Kop-MR497932}, we get ${\alpha_1'}\alpha_2 \in \Ep_{2n}(C, \m C)$. We may conclude the proof by observing:
		$$\hspace{3cm} \alpha=\big(\alpha_1{\alpha_1'}^{-1}\big)\big(\alpha_1'\alpha_2\alpha_3\big) \subseteq \big(\Ep_{2n}(B) \cap {\Sp}_{2n}(B, \m B)\big)\cdot\Ep_{2n}(C,\m C). \hspace{1cm} \qedhere $$
	\end{proof}
	
	\begin{definition}
		Given a ring $R,$ we define the following: 
		\begin{align*}
			\hspace{5cm}sG_1 &:= \Ep_{2n}(B) \cap {\Sp}_{2n}(B, \m B),\\
			sG_2 &:= \Ep_{2n}(B_-) \cap {\Sp}_{2n}(B_-, \m B_-),\\
			sG   &:= sG_1\cdot sG_2,\\
			sH   &:= \{h \in \Ep_{2n}(C) \mid h(sG)h^{-1} \subseteq sG\}.	
		\end{align*}
	\end{definition}
	
	\begin{lemma}\label{l3}
		Let $I \in \mathbb{M}^{(n)}$,  $a \in A$ and $1 \leq i \neq j \leq 2n$. Then 
		\begin{enumerate}
			\item $\delta_I(sG){\delta_I}^{-1}, {\delta_I}^{-1}(sG)\delta_I \subseteq sG.$
			\item  $se_{ij}(at),  se_{ij}(at^{-1}) \in sH.$
		\end{enumerate}
	\end{lemma}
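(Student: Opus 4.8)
The plan is to handle the two parts separately, both being essentially bookkeeping with the conjugation formulas already established. For part (1), the goal is to show that $sG = sG_1 \cdot sG_2$ is stable under conjugation by $\delta_I$ and $\delta_I^{-1}$. The natural strategy is to show that each factor is stable: that is, $\delta_I (sG_1) \delta_I^{-1} \subseteq sG_1$ up to possibly swapping to $sG_2$, and symmetrically for $sG_2$. First I would recall from Lemma \ref{l1} that for $\alpha \in {\ESp}_{2n}(B) \cap {\Sp}_{2n}(B, tB)$ we already have $\delta_I \alpha \delta_I^{-1} \in {\ESp}_{2n}(B)$; the issue here is the congruence condition is mod $\m B$ rather than mod $tB$. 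So I would first reduce a general element of $sG_1$ to the situation of Lemma \ref{l1}: given $\alpha \in {\ESp}_{2n}(B) \cap {\Sp}_{2n}(B, \m B)$, use the retraction $A \hookrightarrow B \xrightarrow{\pi} B/tB$ (recalling $\pi(A) = \pi(B)$ since $M$ is polarized) to write $\alpha = \alpha' \alpha_0$ with $\alpha_0 \in {\ESp}_{2n}(A) \cap {\Sp}_{2n}(A, \m A) = {\ESp}_{2n}(A,\m A)$ (since $A$ is local) and $\alpha' \in {\ESp}_{2n}(B) \cap {\Sp}_{2n}(B, tB)$. Then $\delta_I \alpha_0 \delta_I^{-1} \in {\ESp}_{2n}(C, \m C)$ sits in $C$, not $B$, so I must be more careful — actually the cleaner route is to observe $\delta_I \alpha_0 \delta_I^{-1}$ lands in $B_-$ or $B$ depending on the structure of $I$ relative to $\m$, using Lemma \ref{l2} applied to the generators $se_{ij}(a)$ with $a \in \m A$: the three cases in Lemma \ref{l2} give $se_{ij}(ta)$, $se_{ij}(t^{-1}a)$, $se_{ij}(a)$, and since $a \in \m$, the first lies in $sG_1$, the middle lies in $sG_2$ (as $t^{-1} \in M_-$), and the third in both. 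Combined with the Lemma \ref{l1} argument for the $tB$-part (whose conjugate stays in ${\ESp}_{2n}(B)$ and whose congruence mod $\m B$ is preserved since conjugation by $\delta_I$ fixes $\psi_n$-structure and $\m$ is $\delta_I$-stable), one concludes $\delta_I (sG_1)\delta_I^{-1} \subseteq sG$, and symmetrically $\delta_I (sG_2)\delta_I^{-1} \subseteq sG$. Since $sG = sG_1 \cdot sG_2$, this gives $\delta_I(sG)\delta_I^{-1} \subseteq sG \cdot sG = sG$ provided $sG$ is closed under multiplication — and here I'd invoke the remark following Lemma \ref{l4} (normality of $\Ep_{2n}(C,\m)$ in ${\Sp}_{2n}(C,\m)$) together with the fact that $sG_1$ and $sG_2$ are subgroups to see $sG$ is a subgroup. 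The $\delta_I^{-1}$ statement follows from $t \delta_I^{-1} = \delta_{\sigma(I)}$ and $\sigma(I) \in \mathbb{M}^{(n)}$, exactly as in the opening line of the proof of Lemma \ref{l1}.

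For part (2), the claim is that $se_{ij}(at)$ and $se_{ij}(at^{-1})$ lie in $sH = \{h \in {\ESp}_{2n}(C) \mid h (sG) h^{-1} \subseteq sG\}$. Since both $se_{ij}(at)$ and $se_{ij}(at^{-1})$ are elementary generators of ${\ESp}_{2n}(C)$ by Corollary \ref{c2}, membership in ${\ESp}_{2n}(C)$ is immediate; what must be checked is the conjugation condition. For $se_{ij}(at)$ with $a \in A$: note $se_{ij}(at) \in {\ESp}_{2n}(B)$, and conjugating $sG_1 = {\ESp}_{2n}(B)\cap {\Sp}_{2n}(B,\m B)$ by an element of ${\ESp}_{2n}(B)$ stays inside ${\ESp}_{2n}(B)$ and preserves the $\m B$-congruence (elementary conjugation preserves congruence ideals), so $se_{ij}(at)\, sG_1\, se_{ij}(at)^{-1} \subseteq sG_1$. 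For the $sG_2$ factor, I would write $se_{ij}(at) = \delta_I\, se_{ij}(a)\, \delta_I^{-1}$ for a suitable $I$ via Lemma \ref{l2}, and then use part (1) to move the conjugation past: $se_{ij}(at)\, sG_2\, se_{ij}(at)^{-1} = \delta_I\, se_{ij}(a)\, \delta_I^{-1}\, sG_2\, \delta_I\, se_{ij}(a)^{-1}\, \delta_I^{-1}$, where $\delta_I^{-1} (sG_2)\delta_I \subseteq sG$ by part (1), then $se_{ij}(a) \in {\ESp}_{2n}(A) \subseteq {\ESp}_{2n}(B) \cap {\ESp}_{2n}(B_-)$ normalizes each of $sG_1, sG_2$ (again elementary conjugation preserving congruence), and finally $\delta_I(\cdot)\delta_I^{-1} \subseteq sG$ by part (1) again. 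The case $se_{ij}(at^{-1})$ is symmetric, working in $B_-$ instead of $B$ and choosing the complementary $I$.

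The main obstacle I anticipate is the careful tracking in part (1) of how conjugation by $\delta_I$ moves elements between $sG_1$ (defined over $B$) and $sG_2$ (defined over $B_-$): the diagonal $\delta_I$ multiplies some matrix entries by $t$ and others by $t^{-1}$, so a single elementary generator of $sG_1$ can be pushed to either $sG_1$ or $sG_2$ depending on whether the relevant indices lie in $I$ — keeping the three cases of Lemma \ref{l2} straight, and verifying that the $\m$-congruence survives in every case (which it does because $\m$ is generated by elements of $M(\Gamma)$, fixed setwise by $\delta_I$-conjugation up to the $t^{\pm1}$ scaling that does not affect membership in $\m$), is the delicate part. Everything else reduces to the standard fact that elementary conjugation $\varepsilon \mapsto g \varepsilon g^{-1}$ with $g$ elementary preserves relative elementary/congruence subgroups, which is already in the toolkit from \cite{Kop-MR497932}.
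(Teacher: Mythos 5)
Your overall architecture matches the paper's up to a point: you correctly split a generator of $sG_1$ as $\alpha'\alpha_0$ with $\alpha'\in \Ep_{2n}(B)\cap{\Sp}_{2n}(B,t\m B)$ handled by the method of Lemma \ref{l1}, and $\alpha_0\in\Ep_{2n}(A,\m A)$ left over; and your part (2) is essentially the paper's argument (write $se_{ij}(at^{\pm 1})$ as $\delta_I^{\mp 1}se_{ij}(a)\delta_I^{\pm 1}$, use part (1) to move the $\delta_I$'s past $sG$, and use that $se_{ij}(a)$ with $a\in A$ normalizes each $sG_i$ separately). But there is a genuine gap in your treatment of the middle factor $\alpha_0$, which is exactly the hard step. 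You observe, correctly, that $\delta_I\alpha_0\delta_I^{-1}$ only lives in $\Ep_{2n}(C,\m C)$, and you propose to fix this generator by generator: each $\delta_I se_{ij}(a)\delta_I^{-1}$ with $a\in\m$ lands in $sG_1$ or in $sG_2$ by Lemma \ref{l2}. That is true, but it only shows that $\delta_I\alpha_0\delta_I^{-1}$ is an \emph{interleaved} product $g_1h_1g_2h_2\cdots$ with $g_i\in sG_1$, $h_i\in sG_2$, whereas membership in $sG=sG_1\cdot sG_2$ requires all the $sG_1$-factors to stand to the left of all the $sG_2$-factors. Your fallback --- that $sG$ is closed under multiplication because it is a subgroup --- is unjustified: a product of two subgroups $sG_1\cdot sG_2$ is a subgroup only if $sG_1 sG_2=sG_2 sG_1$, which is nowhere established (and the remark after Lemma \ref{l4} concerns $sV$ and the normality of $\Ep_{2n}(C,\m C)$, which is irrelevant to this question). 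Indeed, if $sG$ were known to be a group normalized by all of $\Ep_{2n}(C)$, the whole apparatus of $sH$ would be unnecessary.

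The paper closes this gap with an extra structural step you are missing: since $A$ is local and $\alpha_0\equiv {\I}_{2n}$ mod $\m$, its diagonal entries are units, so one can run a Gaussian-elimination (Bruhat/LDU-type) argument to factor $\alpha_0=\beta_0\,\diag({{\alpha_0}_{11}}^{-1},{\alpha_0}_{11},\ldots)\,\beta_0'$ with $\beta_0\in sI(\m)$ and $\beta_0'\in s\sigma(I)(\m)$. Conjugating \emph{this} factorization by $\delta_I$ sends $\beta_0$ and the diagonal into $\Ep_{2n}(B,\m B)\subseteq sG_1$ and $\beta_0'$ into $\Ep_{2n}(B_-,\m B_-)\subseteq sG_2$, in that order, so the product lands in $sG_1\cdot sG_2$; combined with $\delta_I\alpha_1'\delta_I^{-1}\in sG_1$ and $\delta_I\alpha_2'\delta_I^{-1}\in sG_2$ on the two ends, the total conjugate reads $sG_1\cdot sG_1\cdot sG_2\cdot sG_2=sG$, using only that $sG_1$ and $sG_2$ are individually subgroups. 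Without this reordering device (or some substitute for it), your part (1) --- and hence your part (2), which depends on it --- does not go through.
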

	
	\begin{proof}
		Let $\alpha = \alpha_1\alpha_2 \in sG,$ where $\alpha_i \in sG_i$ for $i=1,2$. It is sufficient to show $\delta_I\alpha{\delta_I}^{-1} \in sG$. Let $\alpha_1 = \prod\limits_{k=1}^{l}se_{i_kj_k}(g_k),$ where $g_k \in B$ for all $k$. Since $M$ is a polarized monoid, we may write $g_k = a_k+tf_k,$ where $a_k \in A,$ $f_k \in B$. For $1 \leq p \leq l,$ fix $\beta_p = \prod\limits_{k=1}^{p}se_{i_kj_k}(a_k)$. Then 
		$$\alpha_1 = \Bigg(\prod\limits_{k=1}^{l}\beta_kse_{i_kj_k}(tf_k)\beta_k^{-1} \Bigg)\beta_l \in \big(\Ep_{2n}(B) \cap {\Sp}_{2n}(B, t\m B)\big)\cdot\Ep_{2n}(A, \m A)$$
		
		\noindent By (\cite{Gubeladze-ClassicalMR1079964}, Proposition 1.14) , $(P_-, \Gamma, M_-)$ is also a polarized monoid. Following a similar procedure as before, we may say $\alpha_2 \in \Ep_{2n}(A, \m)\cdot\big(\Ep_{2n}(B_-) \cap {\Sp}_{2n}(B_-, t\m B_-)\big)$. Combining these conclusions, we write
		$$\alpha=\alpha_1'\alpha_0\alpha_2',$$
		where $\alpha_1' \in \Ep_{2n}(B) \cap {\Sp}_{2n}(B, t\m B), \alpha_2' \in \Ep_{2n}(B_-) \cap {\Sp}_{2n}(B_-, t^{-1}\m B_-)$ and $\alpha_{0} \in \Ep_{2n}(A, \m)$. By emulating the proof of Lemma \ref{l1}, we have $\delta_I\alpha_i{\delta_I}^{-1} \in sG_i$ for $i=1,2$.
		
		Next we need to show that $\delta_I\alpha_0{\delta_I}^{-1} \in sG$. Since $\alpha_{0} \in \Ep_{2n}(A, \m A),$ where $A$ is a local ring, the diagonal elements of $\alpha_0$ are congruent to $1$ modulo $\m$ and therefore are invertible in $A$. Without loss of generality we may assume $2n \in I$. We may find $\beta \in sI(\m)$ such that $e_{2n}^T\beta\alpha_0=(0, 0, \ldots, {\alpha_0}_{(2n)(2n)})$. Let $\sigma(I):=I^c \cap \{1, \ldots, 2n\} \in \mathbb{M}^{(n)}$. We may choose $\beta' \in s\sigma(I)(\m)$ such that $e_{2n-1}^{T}\beta\alpha_0\beta' = (0, 0, \ldots, {{\alpha_0}_{(2n)(2n)}}^{-1} , 0)$. Continuing this way, we may claim that 
		$$\alpha_0 = \beta_0\big(\diag({{\alpha_0}_{11}}^{-1},{\alpha_0}_{11}, \ldots, {{\alpha_0}_{(2n)(2n)}}^{-1}, {\alpha_0}_{(2n)(2n)})\big)\beta_0',$$
		where $\beta_0 \in sI(\m), \beta_0' \in s\sigma(I)(\m)$ and $\diag({{\alpha_0}_{11}}^{-1},{\alpha_0}_{11}, \ldots, {{\alpha_0}_{(2n)(2n)}}^{-1}, {\alpha_0}_{(2n)(2n)}) \in \Ep_{2n}(A)$. Using Lemma \ref{l2} we may say:
		\begin{align*}
			\delta_I\beta_0~\diag({{\alpha_0}_{11}}^{-1},{\alpha_0}_{11}, \ldots, {{\alpha_0}_{(2n)(2n)}}^{-1}, {\alpha_0}_{(2n)(2n)}){\delta_I}^{-1} \in &~\Ep_{2n}(B, \m B)  \subseteq sG_1\\
			\delta_I\beta_0'{\delta_I}^{-1} \in &~\Ep_{2n}(B_-, \m B_-) \subseteq sG_2.
		\end{align*}
		Thus $\delta_I\alpha_0{\delta_I}^{-1} = \bigg(\delta_I\big(\beta_0~\diag(a_1, a_1^{-1}, \ldots, a_n, a_n^{-1})\big){\delta_I}^{-1}\bigg)\bigg(\delta_I\beta_0'{\delta_I}^{-1}\bigg) \in sG_1\cdot sG_2 = sG$.
		
		To prove the second part, it is sufficient to prove $se_{ij}(at^{-1}) \in sH$ as by invoking the property of polarized monoid $M_-$, we may prove its counterpart. Choose an $I \in \mathbb{M}^{(n)}$ such that $\sigma(i), j \in I$. Since $\delta_I \in sH,$ therefore there exists $\beta_i \in sG_i$ such that $\delta_I\alpha_1\alpha_2{\delta_I}^{-1} = \beta_1\beta_2$. By Lemma \ref{l2}, we have
		\begin{align*}
			se_{ij}(at^{-1})\alpha se_{ij}(at^{-1})^{-1} &= se_{ij}(at^{-1})\alpha_1\alpha_2se_{ij}(-at^{-1})\\
			&=\big({\delta_{I}}^{-1}se_{ij}(a)\delta_{I}\big)\alpha_1\alpha_2\big({\delta_{I}}^{-1}se_{ij}(-a)\delta_{I}\big)\\
			&=\bigg({\delta_{I}}^{-1}\big(se_{ij}(a)\beta_1se_{ij}(-a)\big){\delta_{I}}\bigg)\bigg({\delta_{I}}^{-1}\big(se_{ij}(a)\beta_2se_{ij}(-a)\big){\delta_{I}}\bigg)\\
			&=\bigg(t^{-1}{\delta_{\sigma(I)}}\big(se_{ij}(a)\beta_1se_{ij}(-a)\big)t{{\delta_{\sigma(I)}}^{-1}}\bigg)\bigg(t^{-1}{\delta_{\sigma(I)}}\big(se_{ij}(a)\beta_2se_{ij}(-a)\big)t{{\delta_{\sigma(I)}}^{-1}}\bigg)\\
			&\in sG_1\cdot sG_2 = G. \qedhere
		\end{align*}
	\end{proof}
	
	\begin{corollary}\label{c1}
		$sH={\ESp}_{2n}(C).$
	\end{corollary}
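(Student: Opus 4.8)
The plan is to establish the two inclusions separately. The inclusion $sH \subseteq \Ep_{2n}(C)$ is immediate from the definition of $sH$, so all the content is in proving $\Ep_{2n}(C) \subseteq sH$.

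First I would observe that $sH$ is a \emph{submonoid} of $\Ep_{2n}(C)$: it contains ${\I}_{2n}$, since ${\I}_{2n}(sG){\I}_{2n}^{-1} = sG \subseteq sG$, and if $h_1, h_2 \in sH$ then
\[
(h_1h_2)(sG)(h_1h_2)^{-1} = h_1\bigl(h_2(sG)h_2^{-1}\bigr)h_1^{-1} \subseteq h_1(sG)h_1^{-1} \subseteq sG,
\]
so $h_1h_2 \in sH$. I would be careful \emph{not} to claim at this stage that $sH$ is closed under inverses, because $sG = sG_1 \cdot sG_2$ is a priori only a product of two subgroups and need not itself be a subgroup; the submonoid structure is all that is available before the corollary is proved.

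Next I would invoke the generating set of $\Ep_{2n}(C)$ that was exhibited in the proof of Corollary \ref{c2}: as $a$ ranges over $A$ and $i\neq j$ over admissible indices, the elements $se_{ij}(at)$ and $se_{ij}(at^{-1})$ generate $\Ep_{2n}(C)$. The point I would stress is that this generating set is \emph{symmetric}, because $se_{ij}(at)^{-1} = se_{ij}((-a)t)$ and $se_{ij}(at^{-1})^{-1} = se_{ij}((-a)t^{-1})$ are again of the same form, with $-a \in A$. By Lemma \ref{l3}(2), every one of these generators lies in $sH$.

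Finally, since $sH$ is a submonoid of $\Ep_{2n}(C)$ containing a symmetric set that generates $\Ep_{2n}(C)$, every element of $\Ep_{2n}(C)$ — being expressible as a finite product of those generators, no inverses needed — lies in $sH$. Hence $\Ep_{2n}(C) \subseteq sH$, and together with the reverse inclusion this yields $sH = \Ep_{2n}(C)$, as claimed. The only genuine subtlety worth flagging is the submonoid-versus-subgroup distinction, which is why I would insist on using a generating set closed under inversion; the substantive work has already been carried out in Lemma \ref{l3}, so no further obstacle remains.
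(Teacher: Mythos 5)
Your proposal is correct and follows essentially the same route as the paper, which likewise deduces the result from Lemma \ref{l3}(2) together with the fact that $\{se_{ij}(at),\, se_{ij}(at^{-1}) \mid a \in A\}$ generates $\Ep_{2n}(C)$. Your explicit attention to the submonoid-versus-subgroup issue (resolved by noting the generating set is closed under inversion since $se_{ij}(\lambda)^{-1}=se_{ij}(-\lambda)$) is a welcome clarification of a point the paper's one-line proof leaves implicit.
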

	\begin{proof}
		This follows directly by Lemma \ref{l2} and \ref{l3} by observing that  $\{se_{ij}(at), se_{ij}(at^{-1}) \mid a \in A\}$ forms a generating set for $\Ep_{2n}(C)$. 
	\end{proof}
	
	\begin{lemma}\label{l5}
		${\ESp}_{2n}(C) \cap {\Sp}_{2n}(C, \m C) \subseteq sG$.
	\end{lemma}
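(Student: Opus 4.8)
The plan is to derive the inclusion from Proposition \ref{p3} together with the fact that, at the level of rings, $C = B + B_-$; the real content is checking that $\Ep_{2n}(C,\m C)\subseteq sG$. First I would record that $sG = sG_1\cdot sG_2$ is in fact a \emph{normal subgroup} of $\Ep_{2n}(C)$. Corollary \ref{c1} says precisely that every element of $\Ep_{2n}(C)$ normalizes the subset $sG$; feeding the elements of $sG_2\subseteq\Ep_{2n}(B_-)\subseteq\Ep_{2n}(C)$ into this and using that $sG_1$ and $sG_2$ are themselves subgroups yields $sG_2\cdot sG_1\subseteq sG_1\cdot sG_2$, whence $sG\cdot sG = sG$ and $sG^{-1}=sG$; so $sG$ is a normal subgroup of $\Ep_{2n}(C)$.

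Next I would establish $C = B + B_-$, and hence $\m C = \m B + \m B_-$. Exactly as in the proof of Lemma \ref{l4}, the polarized structure of $M$ gives $B = A + tB$; since $M(\Gamma)$ together with $-t$ generates $M_-$, one has $t^{-n}A\subseteq B_-$ for every $n\ge 0$; and $C$ is obtained from $B$ by inverting $t$. Iterating $B = A+tB$ then gives $t^{-n}B\subseteq B+B_-$ for all $n$, so $C=\bigcup_n t^{-n}B = B+B_-$, and writing an element of $\m C$ as an $\m$-linear combination of elements of $B+B_-$ shows $\m C=\m B+\m B_-$.

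The main step is then short. Each elementary symplectic generator is additive in its parameter, so for $c\in\m C$ written as $c=b+b'$ with $b\in\m B$ and $b'\in\m B_-$ we get $se_{ij}(c)=se_{ij}(b)\,se_{ij}(b')$ (and likewise for the $e_{i(\sigma(i))}(\cdot)$ generators), with $se_{ij}(b)\in\Ep_{2n}(B)\cap\Sp_{2n}(B,\m B)=sG_1$ and $se_{ij}(b')\in sG_2$; hence every such generator lies in $sG_1\cdot sG_2=sG$. Since $\Ep_{2n}(C,\m C)$ is the normal subgroup of $\Ep_{2n}(C)$ generated by these generators and $sG$ is a normal subgroup of $\Ep_{2n}(C)$, it follows that $\Ep_{2n}(C,\m C)\subseteq sG$. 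Finally, given $\alpha\in\Ep_{2n}(C)\cap\Sp_{2n}(C,\m C)$, Proposition \ref{p3} writes $\alpha=\beta\gamma$ with $\beta\in sG_1\subseteq sG$ and $\gamma\in\Ep_{2n}(C,\m C)\subseteq sG$, so $\alpha\in sG$ because $sG$ is a group. The one place that needs care — and what I would regard as the main obstacle — is the passage from Corollary \ref{c1} (that $\Ep_{2n}(C)$ normalizes $sG$) to the statement that $sG$ is a subgroup, i.e. the commutation $sG_2\cdot sG_1\subseteq sG_1\cdot sG_2$, together with the combinatorial verification that $C=B+B_-$ (and the corresponding statement for the ideal $\m C$); both are brief, but the argument genuinely rests on them.
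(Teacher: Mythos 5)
Your proof is correct, and its overall skeleton is the same as the paper's: both reduce, via Proposition \ref{p3}, to showing $\Ep_{2n}(C,\m C)\subseteq sG$, and both extract that inclusion from Corollary \ref{c1}. The mechanism differs. The paper observes that $\Ep_{2n}(C,\m C)$ is generated by $\Ep_{2n}(C)$-conjugates of elements of $\Ep_{2n}(A,\m)\subseteq sG_1$ and of $\Ep_{2n}(B_-,\m B_-)\subseteq sG_2$, and then absorbs each such conjugate into $sG$ by a direct computation with Corollary \ref{c1} — a computation that silently uses the commutation $sG_2\cdot sG_1\subseteq sG_1\cdot sG_2$. You instead prove the ring-level splitting $\m C=\m B+\m B_-$ (from $B=A+tB$ and $t^{-n}A\subseteq B_-$, both consequences of the polarized structure that the paper already invokes in Lemmas \ref{l1} and \ref{l4}), use additivity of the elementary generators to place each generator $se_{ij}(c)$, $c\in\m C$, inside $sG_1\cdot sG_2$, and finish by normality of $sG$. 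The two routes carry the same content, but yours has the merit of making explicit two facts the paper leaves implicit: that $sG$ is genuinely a subgroup (your derivation of $sG_2\cdot sG_1\subseteq sG_1\cdot sG_2$ from Corollary \ref{c1} is exactly what the paper's displayed manipulation also needs when it moves $\gamma$ past $sG$), and the precise sense in which the relative elementary group splits into a $B$-part and a $B_-$-part. I see no gap; there is also no circularity, since Corollary \ref{c1} is established before this lemma.
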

	
	\begin{proof}
		Let $\beta \in \Ep_{2n}(A, \m),$ $\gamma \in \Ep_{2n}(B_-,\m B_-)$ and $\delta \in \Ep_{2n}(C)$. Observe that elements of the form $\delta^{-1}\beta\delta$ and $\delta^{-1}\gamma\delta$ generate $\Ep_{2n}(C, \m C)$.  Using this fact and Corollary \ref{c1}, we show that ${\ESp}_{2n}(C, \m C)sG \subseteq sG$:
		\begin{align*}
			\hspace{3.5cm}\delta^{-1}\beta\delta sG &\subseteq \delta^{-1}\beta sG\delta \subseteq \delta^{-1} sG\delta \subseteq sG,\\
			\delta^{-1}\gamma\delta sG &\subseteq \delta^{-1}\gamma\delta \big( (\gamma\delta)^{-1} \big (\gamma\delta)) \subseteq \delta^{-1} sG \gamma \delta \subseteq \delta^{-1} sG \delta \subseteq sG.
		\end{align*}
		
		\noindent Let $\Delta \in {\ESp}_{2n}(C) \cap {\Sp}_{2n}(C, \m C)$. By Proposition $\ref{p3},$ we have $\Delta=\alpha_1\alpha_2,$ where $\alpha_1 \in sG_1 \subseteq sG$ and $\alpha_2 \in {\ESp}_{2n}(C, \m C)\subseteq sG$.
	\end{proof}
	
	\begin{proposition}\label{p1}
		Let $n \geq 2$ and $\alpha \in {\Sp}_{2n}(B)$ be such that there exists $\beta \in {\Sp}_{2n}(B_-),$ satisfying $\alpha\beta^{-1} \in {\ESp}_{2n}(C)$. Then $\alpha \in {\ESp}_{2n}(B)$. 
	\end{proposition}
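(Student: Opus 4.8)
The plan is a Quillen--Suslin/Gubeladze-style patching argument along the triple $A=B\cap B_-\subseteq B,B_-\subseteq C$. Put $\gamma:=\alpha\beta^{-1}\in{\ESp}_{2n}(C)$. The crux is to factor $\gamma=\gamma_+\gamma_-$ with $\gamma_+\in{\ESp}_{2n}(B)$ and $\gamma_-\in{\ESp}_{2n}(B_-)$. Granting this, $\gamma_+^{-1}\alpha=\gamma_-\beta$ is a symplectic matrix whose entries lie in $B$ (from the left-hand side) and in $B_-$ (from the right-hand side), hence in $B\cap B_-=A$; since $A$ is local, ${\Sp}_{2n}(A)={\ESp}_{2n}(A)\subseteq{\ESp}_{2n}(B)$, so $\alpha=\gamma_+\cdot(\gamma_+^{-1}\alpha)\in{\ESp}_{2n}(B)$, as desired.

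To obtain the factorization I would first reduce modulo $\m$. Let $\phi\colon C\to C/\m C$ be the canonical surjection; as recorded in the proof of Proposition \ref{p3}, $C/\m C\cong k[X,X^{-1}]$ with $k=A/\m A$ and $X$ the image of $t$, and the polarized structure of $M$ (resp.\ of $M_-$, which is polarized by (\cite{Gubeladze-ClassicalMR1079964}, Proposition 1.14)) identifies the images as $\phi(B)=k[X]$ and $\phi(B_-)=k[X^{-1}]$, with $B\twoheadrightarrow k[X]$ and $B_-\twoheadrightarrow k[X^{-1}]$ surjective. Since $n\geq 2$, Kopeiko's unstable ${\KK}_1{\Sp}$ for a polynomial ring over a field gives ${\Sp}_{2n}(k[X])={\ESp}_{2n}(k[X])$ and ${\Sp}_{2n}(k[X^{-1}])={\ESp}_{2n}(k[X^{-1}])$; thus $\phi(\alpha)\in{\ESp}_{2n}(k[X])$ and $\phi(\beta)\in{\ESp}_{2n}(k[X^{-1}])$, and lifting elementary generators along the two surjections produces $\alpha_1\in{\ESp}_{2n}(B)$ and $\beta_1\in{\ESp}_{2n}(B_-)$ with $\phi(\alpha_1)=\phi(\alpha)$ and $\phi(\beta_1)=\phi(\beta)$.

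Now set $\delta:=\alpha_1^{-1}\gamma\beta_1$. Since $\gamma,\alpha_1,\beta_1\in{\ESp}_{2n}(C)$ we get $\delta\in{\ESp}_{2n}(C)$, and $\phi(\delta)=\phi(\alpha_1)^{-1}\phi(\alpha)\phi(\beta)^{-1}\phi(\beta_1)={\I}_{2n}$, so $\delta\in{\Sp}_{2n}(C,\m C)$. Lemma \ref{l5} then gives $\delta=g_1g_2$ with $g_1\in sG_1={\ESp}_{2n}(B)\cap{\Sp}_{2n}(B,\m B)$ and $g_2\in sG_2={\ESp}_{2n}(B_-)\cap{\Sp}_{2n}(B_-,\m B_-)$. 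Hence $\gamma=(\alpha_1g_1)(g_2\beta_1^{-1})$ with $\gamma_+:=\alpha_1g_1\in{\ESp}_{2n}(B)$ and $\gamma_-:=g_2\beta_1^{-1}\in{\ESp}_{2n}(B_-)$, which is the sought factorization; the first paragraph then finishes the proof.

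The main obstacle is exactly the factorization of $\gamma$ with \emph{elementary} (not merely symplectic) factors over $B$ and $B_-$: this is where the preceding structural results are consumed --- Lemma \ref{l5} (built on ${\ESp}_{2n}(C)=sV=sH$ from Corollaries \ref{c2} and \ref{c1}, and on Proposition \ref{p3}) handles the part of $\gamma$ congruent to the identity modulo $\m$, while Kopeiko's theorem over the residue field $k$ handles the remainder. The only ancillary points requiring care are the identifications $\phi(B)=k[X]$ and $\phi(B_-)=k[X^{-1}]$, which are routine from the definition of a polarized monoid and parallel the computation of $C/\m C$ already used for Proposition \ref{p3}.
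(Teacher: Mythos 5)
Your proof is correct and follows essentially the same route as the paper's: reduce modulo $\m$ so that $\alpha\beta^{-1}$ lands in ${\ESp}_{2n}(C)\cap{\Sp}_{2n}(C,\m C)$, apply Lemma \ref{l5} to split it as an element of $sG_1\cdot sG_2$, and patch over $B\cap B_-=A$ using that $A$ is local. The only difference is expository: you make explicit (via Kopeiko over $k[X]$, $k[X^{-1}]$ and lifting of elementary generators) the normalization that the paper compresses into ``following a similar procedure as in Lemma \ref{l1}.''
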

	
	\begin{proof}
		As both $M$ and $M_-$ are polarized monoids, we have:
		$$B/\m B \simeq (A/\m)[X] \simeq k[X] \text{ and } B_-/\m B_- \simeq (A/\m)[X^{-1}] \simeq k[X^{-1}].$$
		Since $\det{(\alpha)} = \det(\beta$), following a similar procedure as in Lemma \ref{l1}, we may say that $\alpha \equiv 1$ mod $\m B$ and $\beta \equiv 1$ mod $\m B_-$. Therefore $\alpha\beta^{-1} \in \Ep_{2n}(C) \cap \Sp_{2n}(C,\m C)$. From Lemma \ref{l5}, we have $\alpha\beta^{-1} \in sG$. Choose $\alpha_i \in sG_i$ for $i=1,2$ such that $\alpha\beta^{-1} = \alpha_1\alpha_2$. Then
		$$\Delta = \alpha_+^{-1}\alpha = \alpha_2\beta \in {\Sp}_{2n}(B) \cap {\Sp}_{2n}(B_-).$$
		Since $A$ is a local ring and $B \cap B_- = A$,  we may infer $\Delta \in \Ep_{2n}(A)$. The result follows by observing
		$$\hspace{4cm}\alpha=\alpha_+(\alpha_+^{-1}\alpha) \in \Ep_{2n}(B)\Ep_{2n}(A) \subseteq \Ep_{2n}(B). \hspace{2.8cm} \qedhere$$
	\end{proof}

	\begin{remark}\label{r4}
		Denote by $\mathbf{H}_r$ the (induction) hypothesis for the Theorem \ref{t1} for $c$-divisible monoid of $\rank r$. 
	\end{remark}
	
	For a monoid $L,$ if $\phi(L)$ is a closed polytope, then $\phi(L)$ can be decomposed as 
	$$\phi(L) = \delta \cup \gamma, $$
	where $\delta$ is a pyramid, $\gamma$ a closed polytope and $\delta \cap \gamma$ is a face of both $\delta$ and $\gamma$. Also 
	$$\dim(\gamma) = \dim(\delta) = \dim(\phi(L)) \text{ and } \dim(\gamma \cap \delta) = \dim(\phi(L)) - 1.$$  
	Fix $\delta' \subsetneq \delta$ a  subpyramid such that the apex of $\delta'$ is in $\intt(\delta)$ and the base of both these pyramids coincide.

	The proof of the following runs similar to that of Lemma 2.18 in \cite{Gubeladze-ClassicalMR1079964}. It is primarily a geometric proof, the algebraic components of which are satisfied by the presence of Proposition \ref{p1}, (\cite{Kop-MR497932}, Lemma 4.2) and (\cite{Basuinjective-MR2578583}, Proposition 3.10). We give a condensed form of this proof indicating the crucial (algebraic) steps.

	\begin{lemma}\label{l6}
		Let $R \in \mathcal{R}_k$ be a local ring, $L$ a normal $c$-divisible monoid of rank $r$ for which $\phi(L)$ is a closed polytope, $k \geq 2$ and $c>1$. If $\mathbf{H}_{r-1}$ holds, then for $\alpha \in {\Sp}_{2k}(R[L_*]),$ there exists $\beta \in {\ESp}_{2k}(R[L_*])$ such that
		$\beta\alpha \in {\Sp}_{2k}(R[L(\delta'\cup\gamma)_*])$.
	\end{lemma}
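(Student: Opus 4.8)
The plan is to transcribe, in the symplectic setting, the geometric ``pyramid shaving'' induction of (\cite{Gubeladze-ClassicalMR1079964}, Lemma 2.18). Write $\tau=\delta\cap\gamma$ for the common facet, $P$ for the apex of $\delta$ and $P'$ for the apex of $\delta'$. Since $\phi(L)$ is a closed rational polytope and $L$ is $c$-divisible, every face that appears is rational, and I would first interpolate a finite chain of sub-pyramids $\delta=\delta_N\supsetneq\delta_{N-1}\supsetneq\cdots\supsetneq\delta_0=\delta'$ whose apexes run along the segment $[P,P']$ (hence stay in $\intt(\delta)$) and which all share the base $\tau$, arranged so that each consecutive difference $\delta_i\smallsetminus\delta_{i-1}$ is a single slab which realises a polarized monoid and whose reflected pyramid lands inside $\delta_{i-1}\cup\gamma$. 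It then suffices to produce, for each $i$, an element $\beta_i\in\ESp_{2k}(R[L_*])$ with $\beta_i\alpha_i\in\Sp_{2k}(R[L(\delta_{i-1}\cup\gamma)_*])$, where $\alpha_N=\alpha$ and $\alpha_{i-1}=\beta_i\alpha_i$; composing the $\beta_i$ gives $\beta$.

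For one step, drop the index and let $\eta$ be the facet along which $\delta_i$ is cut down to $\delta_{i-1}$. Reflecting the slab in the affine span of $\eta$ produces a bipyramid which is $\phi$ of a polarized monoid $M$ of rank $r$ as in Section \ref{s2} --- checking condition (iii) for $M$ is where the $c$-divisibility of $L$ and the rationality of the faces are spent. Localizing at the maximal ideal $\m$ of $R[M(\Gamma)]$ generated by $M(\Gamma)\smallsetminus\{0\}$ together with the maximal ideal of $R$ gives the rings $A=R[M(\Gamma)]_\m$, $B=R[M]_\m$, $B_-=R[M_-]_\m$, $C=R[\mathcal{N}^{-1}M]_\m$ of Section \ref{s2}, with $B\cap B_-=A$, $A$ local, and $C/\m C\simeq k[X,X^{-1}]$ over the residue field $k$ of $R$. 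Because the base monoid $M(\Gamma)$ is $c$-divisible of rank $r-1$, the inductive hypothesis $\mathbf{H}_{r-1}$ applies to it; combined with $R\in\mathcal{R}_k$ and the hypotheses under which Theorem \ref{t1} is being proved, this gives the input $\Sp_{2k}(C,\m C)\subseteq\ESp_{2k}(C)$ that the $C$-level computations need. Concretely: after reducing modulo $\m$ so that (the localization of) $\alpha$ lies in $\Sp_{2k}(C,\m C)\subseteq\ESp_{2k}(C)$, Lemma \ref{l5} and the inclusion $\ESp_{2k}(C)\cap\Sp_{2k}(C,\m C)\subseteq sG=sG_1\cdot sG_2$ factor it as $\alpha_1\alpha_2$ with $\alpha_1\in sG_1\subseteq\ESp_{2k}(B)$ and $\alpha_2\in sG_2\subseteq\ESp_{2k}(B_-)$; Proposition \ref{p1} (with $B\cap B_-=A$) is the engine that turns this into $\alpha_1^{-1}\alpha\in\Sp_{2k}$ of the localization of the subalgebra attached to $\delta_{i-1}\cup\gamma$, so that $\beta=\alpha_1^{-1}$ is the required local correction.

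Since all of this happens over localizations, the last ingredient is globalization: a local--global principle for $\ESp_{2k}$ over monoid algebras (\cite{Basuinjective-MR2578583}, Proposition 3.10) patches the local corrections into one $\beta_i\in\ESp_{2k}(R[L_*])$, while the dilation lemma (\cite{Kop-MR497932}, Lemma 4.2) is what lets one pass from corrections defined over the localizations $R[L_*]_s$ back to honest elementary matrices over $R[L_*]$. The geometry of the slab is used throughout to keep the correcting matrices supported on monomials in $\delta\cup\gamma$, and to guarantee that after the correction the matrix is supported in $\delta_{i-1}\cup\gamma$ --- in particular that nothing escapes the interior monoid $L_*$.

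I expect the main obstacle to be precisely this globalization together with the monomial book-keeping: one must check that the elementary matrices manufactured locally patch without leaving $R[L_*]$, and that Remark \ref{r1} together with $c$-divisibility leaves enough room to refine $\phi(L)=\delta\cup\gamma$ into finitely many slabs each of which genuinely realises a polarized monoid satisfying condition (iii) and whose reflected pyramid fits inside the current smaller region. The lower-rank hypothesis $\mathbf{H}_{r-1}$ and the membership $R\in\mathcal{R}_k$ enter only to secure the $C$-level equality $\Sp_{2k}(C,\m C)\subseteq\ESp_{2k}(C)$ at each step, after which the argument is a transcription of Gubeladze's geometric induction with ${\GL}/{\El}$ replaced by ${\Sp}/{\ESp}$.
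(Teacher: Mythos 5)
Your overall strategy is the right one --- transcribe Gubeladze's Lemma 2.18 using Proposition \ref{p1}, the local--global principle of \cite{Basuinjective-MR2578583} and Kopeiko's dilation lemma --- but the decisive step is absent, and you have flagged it yourself as ``the main obstacle'' rather than resolving it. The paper does not slice $\delta\smallsetminus\delta'$ into a chain of slabs: it uses $\mathbf{H}_{r-1}$ together with (\cite{Kop-MR497932}, Lemma 4.2) to choose a \emph{single} polarized monoid $(P_0,\Gamma_0,M)$ with $\Gamma_0\subseteq\intt(\delta'\cup\gamma)$, $M\subseteq L_*$, $\alpha\in{\Sp}_{2k}(R[M])$ and $\alpha_t\in\Ep_{2k}(R[\mathcal{N}_0^{-1}M])$, where $\mathcal{N}_0=M(\{P_0\})$. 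Proposition \ref{p1} is then invoked as stated --- condition (iv) supplies the hypothesis $\alpha\beta^{-1}\in{\ESp}_{2k}(C)$ after localizing --- to certify elementarity of $\alpha$ over $R[M]_{\m}$ for each maximal ideal, and Proposition 3.10 of \cite{Basuinjective-MR2578583} converts this into $\alpha_s\in\Ep_{2k}(R[M]_s)$ for one non-zerodivisor $s$ with $s^k=1+a_1m_1+\cdots+a_qm_q$, $m_i\in M(\Gamma_0)$. So Proposition \ref{p1} is not ``the engine that moves $\alpha$ into the smaller subalgebra''; it only certifies local elementarity, and your plan to re-run its internal $sG_1\cdot sG_2$ factorization inside Lemma \ref{l6} conflates the proof of Proposition \ref{p1} with its application.

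The actual engine, which your proposal omits, is the Quillen-type substitution: one defines the $R[M]$-homomorphism $\theta:R[M]_s[Z]\rightarrow R[M]_s$ by $\theta(Z)=(-1+(a_1m_1+\cdots+a_qm_q)^{2^l})/s^{k}$ for $l$ sufficiently large, sets $\alpha'=\theta(\alpha_s(1+s^kZ)t)$, and verifies both that $\alpha'\in\Ep_{2k}(R[M])\subseteq\Ep_{2k}(R[L_*])$ and that $\beta=(\alpha\alpha')^{-1}$ forces $\beta\alpha$ to be supported on $L(\delta'\cup\gamma)_*$. This is exactly the ``globalization together with the monomial book-keeping'' that you defer: the $sG$-factorization lives only over the local rings $B$, $B_-$, $C$ and by itself produces no elementary matrix over $R[L_*]$, so without this substitution (or an equivalent device for each of your slabs) the lemma is not proved. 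I would also note that your slab-by-slab refinement multiplies the burden of verifying condition (iii) of the polarized structure --- once per slab --- whereas the paper needs it for only one approximating monoid $M$.
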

	
	\begin{proof}
		By the use of $\mathbf{H}_{r-1}$ and (\cite{Kop-MR497932}, Lemma 4.2) we are able to choose a polarized monoid $(P_0, \Gamma_0, M)$ such that
		\begin{enumerate}[(i)]
			\item $\Gamma_0 \subseteq \intt(\Gamma)$, where $\Gamma=\delta'\cup \gamma$
			\item $M \subseteq L_*$,
			\item $\alpha \in {\Sp}_{2k}(R[M])$ and
			\item $\alpha_t \in \Ep_{2k}(R[\mathcal{N}_0^{-1}M]),$ where $\mathcal{N}_0=M({P_0}) \simeq \pz{}t$.
		\end{enumerate} 
		Next by the use of Proposition \ref{p1} and (\cite{Basuinjective-MR2578583}, Proposition 3.10), as in the proof of Lemma 2.18 in \cite{Gubeladze-ClassicalMR1079964}, we find a non-zerodivisor $s \in R[M]$ such that $\alpha_s \in \Ep_{2k}(R[M]_s),$ and for sufficiently large $l$ and $k$ define the $R[M]$-homomorphism,
		$$\theta: R[M]_s[Z] \rightarrow R[M]_s, $$
		as $\theta(Z) = (-1+(a_1m_1 + \cdots + a_qm_q)^{2^l})/s^{k},$ where $s^k=1+a_1m_1 + \cdots + a_qm_q,$ $a_i \in R$  and $m_i \in M(\Gamma_0)$ for all $i$. The element $\beta$ is chosen as
		$$\beta=(\alpha\alpha')^{-1},$$
		where $\alpha' = \theta(\alpha_s(1+s^kZ)t) \in \Ep_{2k}(R[M]) \subseteq \Ep_{2k}(R[L_*]).$
	\end{proof}

	\begin{proposition}\label{p2}
		Let $R$  be a ring, $L$ a normal positive c-divisible monoid of rank $r',$ with $\phi(L)$ a closed polytope and $n >0$. If
		\begin{enumerate}[(i)]
			\item ${\Sp}_{2n}(R) = {\ESp}_{2n}(R)$
			\item ${\Sp}_{2n}(R_{\m}[L]) = {\ESp}_{2n}(R_{\m}[L])$ for all $\m \in \Max(R)$.
		\end{enumerate}
		Then ${\Sp}_{2n}(R[L]) = {\ESp}_{2n}(R[L])$.
	\end{proposition}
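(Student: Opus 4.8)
The plan is to prove Proposition \ref{p2} by a Quillen--Suslin local--global patching over $\Spec R$: the positivity of $L$ is used to kill the ``constant term'', while the $c$-divisibility of $L$ together with $\phi(L)$ being a polytope lets one descend to a finitely generated, $\mathbb{N}$-graded monoid algebra over $R$, where the classical graded dilation technique applies.

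First, since $L$ is positive there is an $R$-algebra retraction $\varepsilon\colon R[L]\to R$ of $R\hookrightarrow R[L]$ killing $L\smallsetminus\{0\}$; put $I=\ker\varepsilon$. Given $\alpha\in{\Sp}_{2n}(R[L])$, hypothesis (i) puts $\varepsilon_{*}(\alpha)$ in ${\ESp}_{2n}(R)$, and lifting an elementary factorization of it along $R\hookrightarrow R[L]$ gives $\eta\in{\ESp}_{2n}(R[L])$ with $\varepsilon_{*}(\eta^{-1}\alpha)={\rm Id}_{2n}$. Replacing $\alpha$ by $\eta^{-1}\alpha$, we may assume $\alpha\equiv{\rm Id}_{2n}\pmod I$, and it suffices to show such an $\alpha$ lies in ${\ESp}_{2n}(R[L])$.

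Next, since $L$ is normal, positive and $c$-divisible with $\phi(L)=\Delta$ a closed rational polytope, write $L=\varinjlim_{j}L_{j}$ as a filtered union of finitely generated normal positive submonoids with $\phi(L_{j})=\Delta$ and $c^{-1}L_{j}\subseteq L_{j+1}$; up to isomorphism the transition $R[L_{j}]\to R[L_{j+1}]$ is the map $F$ induced by multiplication by $c$ on a fixed polytopal monoid $L_{0}$, so $R[L]=\varinjlim\big(R[L_{0}]\xrightarrow{F}R[L_{0}]\xrightarrow{F}\cdots\big)$ (cf. Remark \ref{r1}). Here $R[L_{0}]$ is $\mathbb{N}$-graded with degree-zero part $R$, and $F$ multiplies degrees by $c$; let $I_{0}$ be its irrelevant ideal. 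As ${\Sp}_{2n}$ and ${\ESp}_{2n}$ commute with the colimit, $\alpha$ descends to some $\beta\in{\Sp}_{2n}(R[L_{0}],I_{0})$, and for each $\m\in\Max(R)$ hypothesis (ii), which makes $\alpha_{\m}$ elementary over $R_{\m}[L]$, together with clearing of denominators, yields $s_{\m}\in R\smallsetminus\m$ and $k_{\m}\ge0$ with $F^{k_{\m}}_{*}(\beta_{s_{\m}})\in{\ESp}_{2n}(R_{s_{\m}}[L_{0}])$. Finitely many $s_{1},\dots,s_{r}$ generate the unit ideal; pushing all to a common stage $K=\max_{i}k_{i}$ (which preserves elementariness) gives $\widetilde{\beta}:=F^{K}_{*}(\beta)\in{\Sp}_{2n}\big(R[L_{0}],F^{K}(I_{0})R[L_{0}]\big)$ with $\widetilde{\beta}_{s_{i}}\in{\ESp}_{2n}(R_{s_{i}}[L_{0}])$ for each $i$. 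Since $R[L_{0}]$ is graded over $R[L_{0}]_{0}=R$ and $\widetilde{\beta}$ is congruent to the identity modulo the irrelevant ideal, the graded Quillen--Suslin dilation-and-patching argument for ${\ESp}_{2n}$ along the comaximal family $\{s_{i}\}$ --- the symplectic analogue underlying \cite{Kop-MR497932}, Lemma 4.2 and \cite{Basuinjective-MR2578583}, Proposition 3.10 --- gives $\widetilde{\beta}\in{\ESp}_{2n}(R[L_{0}])$; its image in $R[L]$ is $\alpha$, whence $\alpha\in{\ESp}_{2n}(R[L])$ and ${\Sp}_{2n}(R[L])={\ESp}_{2n}(R[L])$.

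The main obstacle is the middle step. The augmentation ideal of a $c$-divisible monoid algebra is idempotent ($I=I^{2}$), so no dilation is possible directly over $R[L]$; one must first realise $R[L]$ as the Frobenius tower over the finitely generated graded ring $R[L_{0}]$ and use iterates of $F$ to upgrade the weak condition ``$\equiv{\rm Id}\pmod I$'' to ``$\equiv{\rm Id}$ modulo a high-degree ideal'', which is exactly what the classical graded dilation requires. The care lies in tracking that the local elementary data survives the colimit so that a single stage $K$ serves the whole comaximal family; granting Remark \ref{r1}, \cite{Kop-MR497932}, Lemma 4.2 and \cite{Basuinjective-MR2578583}, Proposition 3.10, the rest is routine.
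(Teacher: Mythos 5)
Your proposal follows essentially the same route as the paper: use hypothesis (i) and the augmentation $R[L]\to R$ to reduce to $\alpha\equiv{\rm Id}_{2n}$ modulo the augmentation ideal, descend $\alpha$ together with the finitely many local elementary factorizations (obtained from (ii) by clearing denominators over a comaximal family $r_1,\dots,r_k$) to a finitely generated submonoid, and then invoke a graded symplectic local--global principle; the paper does exactly this, obtaining the positive grading by enveloping $\phi(L)$ in a simplex and embedding $R[L']$ into $R[X_1,\dots,X_{r'}]$ (Gubeladze, Prop.\ 1.7 and 1.15; Bruns--Gubeladze, Prop.\ 4.16) and then citing Theorem 8.2 of \cite{Rab-Kun}. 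One caveat: your structural claim that $R[L]$ is a Frobenius tower $\varinjlim\bigl(R[L_0]\xrightarrow{F}R[L_0]\xrightarrow{F}\cdots\bigr)$ with every stage isomorphic to a fixed polytopal $L_0$ and transition map ``multiplication by $c$'' is not true in general --- $gp(L)$ is only a $c$-divisible (torsion-free) group, which need not equal $\bigcup_j c^{-j}\Lambda$ for any lattice $\Lambda$ --- so $L$ is merely a filtered union of affine positive submonoids $L_j$ with $\phi(L_j)=\phi(L)$, not a tower of copies of one of them. This costs you nothing, because the Frobenius iteration is doing no real work in your argument: the graded local--global principle only needs $\widetilde{\beta}$ congruent to the identity modulo the irrelevant ideal, not modulo a high-degree ideal, and that congruence descends automatically from $R[L]$ to any finitely generated stage compatible with the retraction. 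Replacing the tower by the plain filtered union (as the paper does) makes your proof correct as written.
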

	
	\begin{proof}
		Let $\m \in \Max(R)$ and $\alpha \in  {\Sp}_{2n}(R[L])$. By $(ii)$ there exists $r \in R \smallsetminus \m,$ such that $\alpha_{r} \in \Ep_{2n}(R_r[L])$. Since $R$ has a finite cover, we may choose $r_i \in R$ such that $(r_1, \ldots, r_k) =1$ and $\alpha_{r_i} \in \Ep_{2n}(R_{r_i}[L])$ for $1 \leq i \leq k$. Choose an affine $L$-submonoid $L'$ such that $\alpha \in {\Sp}_{2n}(R[L'])$ satisfying $\alpha_{r_i} \in \Ep_{2n}(R_{r_i}[L'])$ for all $i$. By Proposition 1.7 and 1.15 of \cite{Gubeladze-ClassicalMR1079964} there exists a simplex $\Delta$ enveloping $\phi(L)$ such that
		$$L' \subset L \subset \mathbb{Q}_+ \otimes \Delta(L) \simeq \mathbb{Q}_+^{r'}.$$
		Since $L'$ is affine, we may embed $R[L'] \hookrightarrow R[X_1, \ldots, X_{r'}]$ (\cite{Gubeladze1-bookMR2508056}, Proposition 4.16) and therefore $R[L']$ has an inherited positive $\pz{r'}$-grading with the zeroth homogenous component as $R$. Since $(i)$ holds, we may assume $\alpha \in {\Sp}_{2n}(R[L'], R[L']^+)$. On invoking Theorem 8.2 of \cite{Rab-Kun}, $\alpha \in \Ep_{2n}(R[L']) \subseteq \Ep_{2n}(R[L])$.
	\end{proof}

	{Now we are in a position to prove the main proof of this section.}
	
	\begin{proof}[\textbf{Proof of Theorem \ref{t1}}]
		
		Let $\alpha \in {\Sp}_{2k}(R[L])$. In the first part of the proof we show that it is sufficient to assume that $L$ is a finite rank positive monoid with $\phi(L)$ closed polytope.
		
		Let $L' = \{L \smallsetminus U(L)\} \cup \{0\}$. Then $L'$ is a positive $c$-divisible monoid. Consider the retraction map $\psi: R[L] \rightarrow R[U(L)],$  given by $\alpha\mid_{U(L)}={\I}_{U(L)}$ and $\alpha(l)=0$ for all $l \notin U(L)$. Then
		
		$$\psi(\alpha) \in {\Sp}_{2k}(R[U(L)]) \simeq {\Sp}_{2k}(R[\nz{k}]) = \Ep_{2k}(R[\nz{k}]) \simeq \Ep_{2k}(R[U(L)]).$$
		
		\noindent Then for $\alpha'=\psi(\alpha)^{-1} \in {\Sp}_{2k}(R[U(L)])$ we get $\alpha'\alpha \in {\Sp}_{2k}(R[L'])$. Therefore, we may assume $L$ is positive. Since $L$ can be seen as a direct limit of $c$-divisible monoids of finite rank $L_i$ with $\phi(L_i)$ closed polytope ($cf.$ Remark \ref{r1}), we may also assume $L$ to be of finite rank $r$ and $\phi(L)$ closed polytope.

		The proof comes through by induction on $r$ (see $\mathbf{H}_r$ Remark \ref{r4}). The $r=1$ case holds, as in this case $L$ turns out to be the direct limit of $\pz{}$ or $\nz{},$ as per the availability of non-trivial units. Let $r>1$. Next we descend into the interior of $M$ so that we may enforce normality on $L$.
		
		Since $\phi(L)$ is a closed polytope, we may fix a vertex $v_1$ of $\phi(L)$ and the submonoid $L'=L(\{v_1\}) \simeq \pz{}$. Let $\pi_1$ be the retraction $\pi_1$ from $R[L]$ to $R[L']$ given by $\pi_1\mid_{L'}={\I}_{L'}$ and $\pi_1(l)=0$ for all $l \notin L'$. Then as before, we choose $\beta_1=\pi_1(\alpha)^{-1} \in \Ep_{2k}(R[L'])$ and $L_1 = L(\phi(L) \smallsetminus \{v_1\})$. Then $\beta_1\alpha \in {\Sp}_{2k}(R[L_1])$. Fix a vertex $v_2$ of $\phi(L_1)$. Proceeding as before by defining $L_1' = L_1(\{v_2\})$ we  end up with  $\beta_2 \in \Ep_{2k}(R[L_1'])$ and $L_2 = L_1(\phi(L_1) \smallsetminus \{v_2\}) = L(\phi(L) \smallsetminus \{v_1, v_2\})$, such that $\beta_2(\beta_1\alpha) \in {\Sp}_{2k}(R[L_2])$. Since we have finite vertices, there exists a $\beta' \in \Ep_{2k}(R[L])$ such that $\beta'\alpha \in {\Sp}_{2k}(R[L_v]),$ where $L_v := L(\phi(L) \smallsetminus \{\text{all vertices of } \phi(L)\} )$. 
		
		\noindent  Let $F$ be any $1$-dimensional face of $\phi(L)$. Since $L(F_*)$ is the direct limit of normal $c$-divisible monoids and $\pi: R[L_v] \rightarrow R[F_*]$ is a retraction, $\pi(\beta'\alpha) \in {\Sp}_{2k}(R[F_*]) = \Ep_{2k}(R[F_*])$ (Since $\mathbf{H}_{r-1}$ implies $\mathbf{H}_2$). In this way, by the use of $\mathbf{H}_{k}$ for $k<r$ we may cleave off the boundary from $L$ to take our calculations to the level of the interior monoid $L_*$. We may thus say that there exists $\alpha' \in \Ep_{2k}(R[L])$ such that $\alpha'\alpha \in {\Sp}_{2k}(R[L_*])$.  Since $L_*$ is a direct limit of normal $c$-divisible monoids, we may assume that $L$ is normal. If $\phi(L)$ is open, then we may write $L=\varprojlim L_i,$ where $L_i$'s are positive $c$-divisible $L$-submonoids for which $\phi(L_i)$'s are closed. Since ${\ESp}_{2k}(R) = {\Sp}_{2k}(R)$, by Proposition \ref{p2} we may assume that $R$ is local. 
		
		By Lemma 2.8 of \cite{Gubeladze2-maximalMR937805} and Lemma \ref{l6}, we may choose a rational simplex $\Delta \subset \phi(L)$ such that $\dim(\Delta)=\dim(\phi(L))$ and $\alpha'' \in \Ep_{2n}(R[L_*])$ such that $\beta:=\alpha''\alpha'\alpha \in {\Sp}_{2n}(R[L(\Delta)_*])$. By Approximation theorem A of \cite{Gubeladze-ClassicalMR1079964}, we have $L(\Delta)_*$ is the direct limit of the free monoid $\pz{r}$.
	\end{proof}
	
	\begin{theorem}\label{t3}
		Let $R$ be a regular ring of dimension $d,$ ${\KK}_1{\Sp}(R)=0$ and $L$ be a c-divisible monoid, where $c > 1$. For $k \geq d+2,$
		$${\ESp}_{2k}(R[L]) = {\Sp}_{2k}(R[L]).$$ 
	\end{theorem}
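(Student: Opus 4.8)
The strategy is to settle the case where $R$ is local directly from Theorem~\ref{t1}, and then to globalize by Proposition~\ref{p2}; this will be organized as an induction on $r=\rank L$.

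\emph{The local case.} I first note that Theorem~\ref{t1} already yields the conclusion for every regular \emph{local} ring $S$ of dimension $\le d$: such an $S$ lies in $\mathcal R_k$ for all $k\ge d+2$ by Example~\ref{r5}, and since $S$ is regular of dimension $\le d$ with ${\KK}_1{\Sp}(S)=0$ (being local), Corollary~\ref{cktheorysp} provides ${\Sp}_{2k}(S[X_1,\dots,X_m,Y_1^{\pm 1},\dots,Y_n^{\pm 1}])={\ESp}_{2k}(S[X_1,\dots,X_m,Y_1^{\pm 1},\dots,Y_n^{\pm 1}])$ for all $m,n$ and all $k\ge d+2$. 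Thus both hypotheses of Theorem~\ref{t1} hold for $S$, and ${\ESp}_{2k}(S[L'])={\Sp}_{2k}(S[L'])$ for every $c$-divisible monoid $L'$ and every $k\ge d+2$.

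\emph{Globalization.} Now take $R$ regular of dimension $d$ with ${\KK}_1{\Sp}(R)=0$, fix $k\ge d+2$, and let $\alpha\in{\Sp}_{2k}(R[L])$. For $r\le 1$ the claim is clear ($R[L]$ is then $R$, $R[X]$, $R[X^{\pm 1}]$, or a filtered colimit of such, so Vaserstein stability \cite{Vaserstein-MR0269722} or Corollary~\ref{cktheorysp} applies), so assume $r>1$ and that the theorem holds over $R$ for $c$-divisible monoids of rank $<r$. Following the opening reductions in the proof of Theorem~\ref{t1}: retracting $R[L]$ onto its units subalgebra $R[U(L)]$, a Laurent polynomial ring over $R$ over which ${\Sp}_{2k}={\ESp}_{2k}$ by Corollary~\ref{cktheorysp}, lets me assume $L$ is positive; passing to a filtered direct limit (Remark~\ref{r1}) and using that ${\Sp}_{2k}$ and ${\ESp}_{2k}$ commute with filtered colimits lets me assume $L$ has finite rank with $\phi(L)$ a closed polytope; and shaving off the boundary of $\phi(L)$ and descending to the interior monoid $L_*$ (a filtered colimit of normal $c$-divisible monoids, the retractions onto the lower-rank faces of $\phi(L)$ being handled by the inductive hypothesis) lets me assume $L$ is normal. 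For such an $L$, Proposition~\ref{p2} applies: hypothesis $(i)$, ${\Sp}_{2k}(R)={\ESp}_{2k}(R)$, holds by the surjective and injective halves of Vaserstein's stabilization theorem \cite{Vaserstein-MR0269722} (since ${\KK}_1{\Sp}(R)=0$ and $k\ge d+2$), and hypothesis $(ii)$, ${\Sp}_{2k}(R_\m[L])={\ESp}_{2k}(R_\m[L])$ for $\m\in\Max(R)$, is the local case applied to $S=R_\m$ (of dimension $\le d$). Hence ${\Sp}_{2k}(R[L])={\ESp}_{2k}(R[L])$, which closes the induction.

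\emph{Where the difficulty lies.} Almost everything of substance is upstream, in Theorem~\ref{t1} and the excision/patching package of Lemmas~\ref{l1}--\ref{l6} and Propositions~\ref{p1}--\ref{p3} (the description of ${\ESp}_{2n}(C)$, the gluing of $\Ep_{2n}(B)$ and $\Ep_{2n}(B_-)$ along $\Ep_{2n}(A)$ over $B\cap B_-=A$, and the polarized-monoid approximation theorems of \cite{Gubeladze-ClassicalMR1079964}); the present theorem only reassembles these with the correct stability bounds. The single new wrinkle is organizational: membership in $\mathcal R_k$ is defined through an iterated localization/$R(X)$ construction that is not visibly passed on by $R(X)$ when $R$ is non-local, so one cannot feed $R$ straight into Theorem~\ref{t1} and must route through the regular local rings $R_\m$ instead -- which is exactly what forces the preliminary normalization of $L$, and hence the induction on $\rank L$, before Proposition~\ref{p2} is available. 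The bound $k\ge d+2$ is the sharp range of Vaserstein symplectic stability over a regular ring of dimension $\le d$; it is used over $R$, over each $R_\m$, and -- through Corollary~\ref{cktheorysp} and the Bass--Heller--Swan-type homotopy invariance of Theorem~\ref{ktheorysp} -- over the Laurent polynomial extensions feeding Theorem~\ref{t1}.
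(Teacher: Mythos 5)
Your proposal is correct and follows essentially the same route as the paper: reduce to a finite-rank positive normal $L$ with $\phi(L)$ a closed polytope via the opening reductions of Theorem~\ref{t1} (enabled by Corollary~\ref{cktheorysp} and Vaserstein stability for $k\ge d+2$), then globalize with Proposition~\ref{p2}, verifying its local hypothesis by applying Theorem~\ref{t1} to $R_\m\in\mathcal R_k$ (Example~\ref{r5}). Your write-up merely makes explicit the induction on $\rank L$ and the local/global split that the paper's proof leaves implicit.
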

	
	\begin{proof}
		Since ${\KK}_1{\Sp}(R)=0,$ by the Stabilization theorem in \cite{Vaserstein-MR0269722}, for $k \geq d+2,$ we get 
		$${\ESp}_{2k}(R) = {\Sp}_{2k}(R).$$
		Building upon the availability of Corollary \ref{cktheorysp}, $L$ can be assumed to be a finite rank positive normal monoid with $\phi(L)$ a closed polytope (as done in the proof of Theorem \ref{t1}). Since $R_{\m} \in \mathcal{R}_k,$ for all $\m \in \Max(R),$ therefore by Proposition \ref{p2} and Theorem \ref{t1} we have the required.
	\end{proof}

	From Example \ref{r3} and \ref{r6}, we similarly prove the following:
	
	\begin{theorem}\label{t7}
		Let  $R$ be a regular ring of dimension $d$ and $L$ be a c-divisible monoid, where $c > 1$. If 
		\begin{enumerate}[(i)]
			\item $d \leq 1,$ then ${\Sp}_{2k}(R[L]) = {\Sp}_{4}(R){\ESp}_{2k}(R[L])$ for all $k \geq 2$.
			\item R is a geometrically regular ring containing a field with trivial ${\KK}_1{\Sp}$, then ${\Sp}_{2k}(R[L]) = {\ESp}_{2k}(R[L])$ for all $2k \geq \max\{3,d+1\}$.
		\end{enumerate}
	\end{theorem}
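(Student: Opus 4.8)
\textbf{Proof proposal for Theorem \ref{t7}.}
The plan is to run the same reduction scheme that proves Theorem \ref{t1} and Theorem \ref{t3}, but feeding in the sharper unstable bounds established in Example \ref{r3} and Example \ref{r6} in place of the Vaserstein stabilization bound $k \geq d+2$. First I would treat part (ii): by Example \ref{r6}, for a geometrically regular ring $R$ containing a field with trivial ${\KK}_1{\Sp}$ one has ${\ESp}_{2k}(R(X)) = {\Sp}_{2k}(R(X))$ for $2k \geq \max\{3,d+1\}$, hence $R \in \mathcal{R}_{\lceil k/2 \rceil}$ for such $k$, and moreover (when $R$ is in addition local) the Laurent--polynomial equality ${\Sp}_{2k}(R[X_1,\ldots,X_m,Y_1^{\pm 1},\ldots,Y_n^{\pm 1}]) = {\ESp}_{2k}(R[X_1,\ldots,X_m,Y_1^{\pm 1},\ldots,Y_n^{\pm 1}])$ holds for all $k \geq 2$. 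Since localizations of a geometrically regular ring containing a field are again geometrically regular of no larger dimension, the hypothesis of Theorem \ref{t1} is met with the monoid $L$, and an application of Theorem \ref{t1} over each $R_\m$ gives ${\Sp}_{2k}(R_\m[L]) = {\ESp}_{2k}(R_\m[L])$; Proposition \ref{p2} then glues these local statements together (using ${\Sp}_{2k}(R) = {\ESp}_{2k}(R)$, which follows from trivial ${\KK}_1{\Sp}$ once $2k \geq \max\{3,d+1\}$, via \cite{Basuinjective-MR2578583}) to yield ${\Sp}_{2k}(R[L]) = {\ESp}_{2k}(R[L])$ globally, as required.

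For part (i), the dimension $\leq 1$ case, I would argue similarly but with the caveat that for $d \leq 1$ the base-ring equality ${\Sp}_{2k}(R) = {\ESp}_{2k}(R)$ need not hold for $k=2$ (Cohn's example lives in dimension $1$), so one cannot quite invoke Proposition \ref{p2} as stated; instead the conclusion carries an unavoidable ${\Sp}_4(R)$-factor. Concretely, by Example \ref{r3} every regular local ring of dimension $\leq 1$ lies in $\mathcal{R}_k$ for all $k \geq 1$ and satisfies ${\Sp}_{2k}(R'[X_1,\ldots,X_m,Y_1^{\pm 1},\ldots,Y_n^{\pm 1}]) = {\ESp}_{2k}(R'[X_1,\ldots,X_m,Y_1^{\pm 1},\ldots,Y_n^{\pm 1}])$ for $k \geq 2$, so Theorem \ref{t1} applies over each $R_\m$ and gives ${\Sp}_{2k}(R_\m[L]) = {\ESp}_{2k}(R_\m[L])$ for $k \geq 2$. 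To patch globally I would re-run the proof of Proposition \ref{p2}: pick an affine $L$-submonoid $L'$ with $\alpha \in {\Sp}_{2k}(R[L'])$ and $\alpha_{r_i} \in {\ESp}_{2k}(R_{r_i}[L'])$ over a finite cover, embed $R[L'] \hookrightarrow R[X_1,\ldots,X_{r'}]$ so that $R[L']$ is positively graded with degree-zero part $R$, and apply the local–global principle of \cite{Rab-Kun} (Theorem 8.2) after killing the degree-zero part of $\alpha$ modulo ${\ESp}_{2k}$; the degree-zero part of $\alpha$ lies in ${\Sp}_{2k}(R)$, which for $k \geq 2$, $d \leq 1$ is absorbed into ${\Sp}_4(R){\ESp}_{2k}(R[L])$ by the standard block-stabilization $\bigl(\begin{smallmatrix}\tau & 0\\ 0 & {\I}_{2k-4}\end{smallmatrix}\bigr)$ with $\tau \in {\Sp}_4(R)$. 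This accounts precisely for the extra factor ${\Sp}_4(R)$ in the statement.

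The main obstacle I anticipate is the careful bookkeeping in the reduction to the local, finite-rank, closed-polytope, normal, positive case: this is exactly the multi-stage induction carried out in the proof of Theorem \ref{t1} (peeling off units via the retraction to $R[U(L)]$, peeling off vertices and boundary faces via $\mathbf{H}_{r-1}$, descending to the interior monoid $L_*$ to force normality, and handling the open-$\phi(L)$ case by a direct-limit argument), and one must check at each stage that the hypotheses ``$R_\m \in \mathcal{R}_k$'' and ``the Laurent-polynomial equality in dimension $\dim(R_\m)$'' are inherited by all the rings $R_\m$ that arise — which holds because both geometric regularity containing a field and regularity of dimension $\leq 1$ are stable under localization and do not raise the dimension. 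The only genuinely new point relative to Theorems \ref{t1} and \ref{t3} is the relaxed range of $k$ (allowing $k = 2$, hence $2k \geq \max\{3,d+1\}$ rather than $k \geq d+2$), which is legitimate precisely because Examples \ref{r3} and \ref{r6} supply the unstable inputs at this lower range; I would double-check that Lemma \ref{l6} and Proposition \ref{p1}, the algebraic engine behind Theorem \ref{t1}, only ever require $k \geq 2$ together with $R \in \mathcal{R}_k$, which is indeed how they are stated.
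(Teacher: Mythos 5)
Your proposal matches the paper's intended argument: the paper itself gives no separate proof of this theorem beyond the remark that it follows "from Example \ref{r3} and \ref{r6}" by the same localization--patching scheme used for Theorems \ref{t1} and \ref{t3} (local case via $\mathcal{R}_k$-membership and the reduction machinery, then Proposition \ref{p2} to globalize), which is exactly what you carry out. Your only slightly off note is attributing the ${\Sp}_4(R)$ factor in part (i) to Cohn's $k=1$ example; it is really there because part (i) does not assume ${\KK}_1{\Sp}(R)=0$, so the base-ring contribution ${\Sp}_4(R)$ cannot be absorbed into the elementary subgroup --- but this does not affect the correctness of your argument.
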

	
	We are now in a position to prove ${\KK}_1{\Sp}$-invariance:
	
	\begin{proof}[\textbf{Proof of Theorem \ref{t2}}]
		Let $A=R[L]$ and consider the stable symplectic and elementary symplectic group
		$${\Sp}(A)=\bigcup\limits_{l \geq 1} {\Sp}_{2t}(A) \text{ and } {\ESp}(A) = \bigcup\limits_{l \geq 1} {\ESp}_{2t}(A).$$
		Assume $\alpha \in {\Sp}(A)$. Without loss of generality we may assume $\alpha \in {\Sp}_{2k}(A)$ for some $k \geq \dim(R)+2$. Since $R$ is regular, on application of Theorem \ref{t3},
		$$\alpha_{\m} \in  {\Sp}_{2k}(A_{\m}) = {\ESp}_{2k+2}(A_{\m}).$$
		\noindent Proceeding as in the proof of Theorem \ref{t1},  we may assume $L$ to be positive of finite rank $r$ and $\phi(L)$ closed polytope.  As before, we descend into the interior of $L$ to obtain $\alpha \in {\Sp}_{2k}(R[L_*]){\Ep}_{2k}(R[L])$. Therefore we may assume that $L$ is normal. If $\phi(L)$ is a open, then we may write $L=\varprojlim L_i,$ where $L_i$'s are positive $c$-divisible $L$-submonoids for which $\phi(L_i)$'s are closed. This allows us to choose an affine $L$-submonoid $L'$ such that $\alpha \in {\Sp}_{2k}(R[L'])$ satisfying $\alpha_{\m} \in \Ep_{2k}(R_{\m}[L'])$ for all $\m \in \Max(R)$. Here $R[L']$ has an inherited positive grading with $R[L']_0=R$. On invoking Theorem 8.2 of \cite{Rab-Kun},
		$$\alpha \in {\Sp}_{2k}(R){\Ep}_{2k}(R[L']) \subset {\Sp}_{2k}(R){\Ep}_{2k}(R[L]).$$
		Therefore ${K}_1Sp(R[L]) \simeq {K}_1Sp(R).$
	\end{proof}
	
	Let $M$ be any monoid, $A=R[M]$ and $A' = R[X_1, \ldots, X_m, Y_1^{\pm{1}}, \ldots, Y_n^{\pm{1}}]$. Given that monoid algebras are natural generalization of Laurent polynomial rings $A'$, the best general estimates that exist in literature for surjective and injective bounds of $R[M]$ usually matches upto that of its well studied predecessor $A'$.  
	For rings of the type $A',$ surjective bounds are due to (\cite{Suslin-MR0472792}, \cite{Lindel-MR1322406}, \cite{Kop-MR497932}, \cite{Keshari-Symp-MR2295082}, \cite{Bhat-symp-MR1858341}). As anticipated, best possible surjective bounds for monoid algebras were established in (\cite{Gubeladze-unimodularMR3853049}, \cite{Maria-MKK1}) for the linear case and in \cite{Rab-mar1} for the symplectic case. Upto this point, the theory for injective stabilization has only progressed till Laurent polynomial rings, where for $k \geq \max\{3, d+2\}$ (linear \cite{Suslin-MR0472792}) and $2k \geq \max\{4, 2d+4\}$ (symplectic \cite{Kop-MR497932}) the map $\phi_k$ becomes injective. For monoid algebras generated by $c$-divisible monoids, we have the following theorem:
	
	\begin{corollary}
		Let $R$ be a regular ring of dimension $d$ with ${\KK}_1{\Sp}(R)=0$ and $L$ be a $c$-divisible monoid. Then the map $\phi_k$ is an isomorphism for all $k \geq d+2$:
		$$\phi_k: \frac{{\Sp}_{2k}(R[L])}{{\ESp}_{2k}(R[L])} \xrightarrow{} {K}_1Sp(R[L]).$$
		Further, if $R$ is a special PID, then $\phi_k$ is an isomorphism for $k \geq 2$.
	\end{corollary}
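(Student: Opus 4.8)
The plan is to observe that, under the stated hypotheses, both the source and the target of $\phi_k$ are the trivial group, so that $\phi_k$ --- the canonical homomorphism induced by ${\Sp}_{2k} \hookrightarrow {\Sp}$ --- is an isomorphism for formal reasons; all the real content has already been packaged into Theorems \ref{t3} and \ref{t2}.

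First I would treat $k \geq d+2$. Theorem \ref{t3} gives ${\ESp}_{2k}(R[L]) = {\Sp}_{2k}(R[L])$, so the source ${\Sp}_{2k}(R[L])/{\ESp}_{2k}(R[L])$ is the trivial group --- in particular it is genuinely a group (${\ESp}_{2k}$ being trivially normal) and $\phi_k$ is a well-defined homomorphism. Since the same equality holds for all $k' \geq d+2$, i.e. on a cofinal set of indices, the target ${\KK}_1{\Sp}(R[L]) = \varinjlim_{k'} {\Sp}_{2k'}(R[L])/{\ESp}_{2k'}(R[L])$ is also trivial; alternatively, this is immediate from Theorem \ref{t2}, which yields ${\KK}_1{\Sp}(R[L]) \simeq {\KK}_1{\Sp}(R) = 0$. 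Both ends being trivial, $\phi_k$ is an isomorphism for every $k \geq d+2$.

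For the special PID case I would run the same scheme but lower the bound to $k \geq 2$ using Theorem \ref{t7}(i). A special PID is a regular ring of Krull dimension $\leq 1$, so Theorem \ref{t7}(i) applies and gives ${\Sp}_{2k}(R[L]) = {\Sp}_4(R)\,{\ESp}_{2k}(R[L])$ for all $k \geq 2$. By definition a special PID satisfies ${\Sl}_2(R) = {\El}_2(R)$, equivalently ${\Sp}_2(R) = {\ESp}_2(R)$; combining this with Vaserstein's stabilization theorem \cite{Vaserstein-MR0269722} in dimension $\leq 1$ gives ${\Sp}_4(R) = {\ESp}_4(R) \subseteq {\ESp}_{2k}(R[L])$, and hence ${\Sp}_{2k}(R[L]) = {\ESp}_{2k}(R[L])$ for all $k \geq 2$. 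As before this forces both the source of $\phi_k$ and ${\KK}_1{\Sp}(R[L])$ to vanish, so $\phi_k$ is an isomorphism for $k \geq 2$ (and one may again cross-check ${\KK}_1{\Sp}(R[L]) \simeq {\KK}_1{\Sp}(R) = 0$ through Theorem \ref{t2}).

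I do not expect any genuine obstacle internal to this corollary --- the entire weight rests on Theorems \ref{t2}, \ref{t3} and \ref{t7}. The only two points that need a sentence of justification both lie in the special PID case: absorbing the correction factor ${\Sp}_4(R)$ of Theorem \ref{t7}(i) into ${\ESp}_{2k}(R[L])$, and the vanishing of ${\KK}_1{\Sp}$ of a special PID --- each following from ${\Sl}_2 = {\El}_2$ together with one-dimensional symplectic $K_1$-stability, precisely in the spirit of Example \ref{r3}.
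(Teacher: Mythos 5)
Your argument is correct and is precisely the (omitted) argument the paper intends: Theorem \ref{t3} makes the source trivial for $k \geq d+2$, Theorem \ref{t2} (equivalently, triviality of the quotient on a cofinal set of indices) makes the target trivial, and for the special PID case Theorem \ref{t7}(i) together with $\Sp_4(R)=\ESp_4(R)$ (from $\Sp_2(R)=\ESp_2(R)$ and one-dimensional surjective stability, as in Example \ref{r3}) lowers the bound to $k\geq 2$. The paper states this corollary without proof, so there is nothing further to compare; the one step that genuinely needed a sentence --- absorbing the $\Sp_4(R)$ factor into ${\ESp}_{2k}(R[L])$ --- is handled correctly in your write-up.
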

	
	\section{Equality of symplectic transvections and isometeries}
	
	Let $R$ be a ring. The pair $(P, \lrinn{}{})$ is called a symplectic $R$-module, if $P$ is a projective $R$-module and $\lrinn{}{}$ a non-degenerate alternating bilinear form on $P$. The form `$\psi_n$' defined in Section \ref{s2} gives the standard (non-degenerate alternating) form on the free $R$-module $R^{2n}$. We call $(P,\lrinn{}{})$ the free symplectic $R$-module, if $P$ is free with the standard form. We only consider finitely generated modules in this section.
	
	Given two symplectic $R$-modules $(P, \lrinn{}{}_1)$ and $(Q, \lrinn{}{}_2)$ one may define a non-degenerate alternating bilinear form on $P \oplus Q$ given by
	$$\lrinn{p_1+q_1}{p_2+q_2} := \lrinn{p_1}{p_2}_1 + \lrinn{q_1}{q_2}_2.$$
	This combination of symplectic modules is denoted as $(P \perp Q, \lrinn{}{})$. We prefer to omit the distinguishing subscripts on the inner products, whenever the distinction between the inner products is clear from the context.
	
	Consider a symplectic $R$-module $(P, \lrinn{}{})$ and $\m \in \Max(R)$. One may define a corresponding local symplectic $R_{\m}$-module by  $(P_{\m}, \lrinn{}{}_{\m}),$ where
	$$ \Lrinn{\frac{p}{s}}{\frac{q}{s'}}_{\m}:= \frac{1}{ss'}\lrinn{p}{q},$$
	for $p, q \in P$ and $s, s' \in R \smallsetminus \m$.
	
	An isomorphism $\theta: (P, \lrinn{}{}) \rightarrow (Q, \lrinn{}{})$  of symplectic $R$-modules is a form preserving $R$-isomorphism between $P$ and $Q,$ \textit{i.e.,} $\theta: P \rightarrow Q$ is an isomorphism of $R$-modules satisfying:
	$$\langle p_1, p_2 \rangle = \langle \theta(p_1), \theta(p_2) \rangle,$$
	for $p_1, p_2 \in P$. These form preserving automorphisms of $(P,\lrinn{}{})$ are denoted by ${\Sp}(P)$. Following the definitions of Bass, we define the elementary transvections on $P \perp R^2,$ denoted by $\Etrans_{\Sp}(P \perp R^2)$, as the subgroup of $\Sp(P \perp R^2)$ generated by the automorphisms $\Delta_q$ and $\Gamma_q$:
	\begin{eqnarray*}
		\Delta_q:& ( p,a,b) ~\mapsto~(p+bq, a-\lrinn{p}{q}+b, b),\\
		\Gamma_q:& (p,a,b)~\mapsto~(p + aq, a, b+\lrinn{p}{q}-a),
	\end{eqnarray*}
	where $p, q \in P$ and $a,b \in R$. On fixing $q \in P,$ define $\phi_q \in {\Hom}_R(P,R)$ as $\phi_q(p) = \lrinn{p}{q},$ for $p \in P$.
	
	\begin{remark}\label{r2}
		Let $M$ be a monoid and $(P, \phi)$ a symplectic $R[M]$-module such that $P \simeq R[M]^{2k}$ for some $k>0$. Then by  (\cite{Lam-MR2235330}, Theorem 5.8) we have
		$$(P, \phi) \simeq (P, \psi_k),$$
		as symplectic $R[M]$-modules.
	\end{remark}
	
	\begin{proposition}\label{p4}
		Let $R$ be a regular ring of dimension d with ${\KK}_1{\Sp}(R)=0$ and $A=R[X_1, \ldots, X_m, Y_1^{\pm{1}}, \ldots, Y_{n}^{\pm{1}}]$. Let $P$ be a symplectic $A$-module of rank $2k,$ where $k \geq \max\{2, d+1\}$. Then
		$${\Etrans}_{\Sp}(P \perp A^{2}) = {\Sp}(P \perp A^2).$$
	\end{proposition}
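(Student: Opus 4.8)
The plan is to reduce the statement over the Laurent polynomial ring $A = R[X_1,\dots,X_m,Y_1^{\pm 1},\dots,Y_n^{\pm 1}]$ to the already-available matrix-level result, namely Corollary \ref{cktheorysp}, which gives ${\ESp}_{2k}(A) = {\Sp}_{2k}(A)$ for $k \geq d+2$ (and more generally the condition $R \in \mathcal{R}_k$ absorbed into the hypothesis of Theorem \ref{t1}). The first step is to observe that since $R$ is regular, $A$ is regular, hence ${\KK}_0{\Sp}$ is well-behaved; more importantly, a rank $2k$ symplectic module $P$ over $A$ is \emph{stably free} because projective modules over $A$ are free (Quillen--Suslin / Swan for Laurent extensions of regular rings), and in fact free. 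Once $P \simeq A^{2k}$ as an $A$-module, Remark \ref{r2} (invoking \cite{Lam-MR2235330}, Theorem 5.8) upgrades this to a symplectic isomorphism $(P,\phi) \simeq (A^{2k}, \psi_k)$. Under this identification, ${\Sp}(P \perp A^2) \simeq {\Sp}_{2k+2}(A)$ and ${\Etrans}_{\Sp}(P \perp A^2) \simeq {\ESp}_{2k+2}(A)$, where the latter identification is the standard dictionary between Bass's elementary transvections $\Delta_q, \Gamma_q$ on $P \perp A^2$ and the elementary symplectic generators $se_{ij}(\lambda)$, $e_{i\sigma(i)}(\lambda)$ on $A^{2k+2}$.

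With these identifications in place, the claim ${\Etrans}_{\Sp}(P \perp A^2) = {\Sp}(P \perp A^2)$ becomes exactly ${\ESp}_{2(k+1)}(A) = {\Sp}_{2(k+1)}(A)$. Since $k \geq \max\{2,d+1\}$, we have $k+1 \geq d+2$ and $k+1 \geq 3$, so Corollary \ref{cktheorysp} applies directly to the index $2(k+1)$ and finishes the proof. I would write this out by: (1) citing freeness of $P$ and applying Remark \ref{r2}; (2) stating the transvection-to-elementary-matrix dictionary on $P\perp A^2 \simeq A^{2k+2}$, noting that $\Delta_q$ and $\Gamma_q$ correspond to products of $se_{ij}$'s once a symplectic basis is fixed; (3) invoking Corollary \ref{cktheorysp} with the shifted index.

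The main obstacle — and the only genuinely substantive point — is step (2): verifying that the Bass transvections $\Delta_q,\Gamma_q$ generate precisely the elementary symplectic subgroup ${\ESp}_{2k+2}(A)$ of ${\Sp}_{2k+2}(A)$ under the chosen symplectic basis, and not merely a subgroup of it. This requires expressing each $\Delta_q$ (resp. $\Gamma_q$), for $q$ running over a generating set of $P \simeq A^{2k}$, as an explicit word in the $se_{ij}(\lambda)$ and $e_{i\sigma(i)}(\lambda)$, and conversely realizing each standard symplectic elementary generator as such a transvection or a product thereof. This is essentially a coordinate bookkeeping lemma, standard in the literature (cf. the analogous linear statement and Bass's original treatment), so I would present it compactly rather than in full detail, pointing to the structure of $\Delta_q$ — which acts trivially on the $b$-coordinate and adds $bq$ to the $P$-part — as manifestly elementary, and symmetrically for $\Gamma_q$. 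A minor secondary point is confirming $R \in \mathcal{R}_{k+1}$ is not needed here since Corollary \ref{cktheorysp} already packages the ${\KK}_1{\Sp}(R)=0$ hypothesis via Vaserstein's stabilization theorem.
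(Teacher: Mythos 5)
Your argument hinges on the claim that $P$ is \emph{free} over $A$, and that is a genuine gap. Quillen--Suslin (and its Laurent extensions) gives freeness when the base is a field or a PID; for a general regular ring $R$ of dimension $d$, projective modules over $A=R[X_1,\ldots,X_m,Y_1^{\pm1},\ldots,Y_n^{\pm1}]$ are at best \emph{extended from} $R$, and $R$ itself may carry non-free projectives (already the case $m=n=0$ shows the claim cannot be right as stated). The paper only asserts that $P$ is \emph{stably} free, i.e.\ $P\perp A^{2t}\simeq A^{2k+2t}$ for some $t\ge 0$. Without actual freeness, your identification ${\Sp}(P\perp A^2)\simeq{\Sp}_{2k+2}(A)$ via Remark \ref{r2} collapses, and the appeal to Corollary \ref{cktheorysp} at index $k+1$ never gets off the ground.

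The paper's proof supplies exactly the ingredient you are missing: it stabilizes to $P\perp A^{2t}$, where the module \emph{is} free, identifies ${\Sp}(P\perp A^{2t})={\Sp}_{2(k+t)}(A)={\ESp}_{2(k+t)}(A)={\Etrans}_{\Sp}(P\perp A^{2t})$ (the last equality by \cite{BBR-MR2578513}, Lemma 2.20 --- this is the transvection/elementary-matrix dictionary of your step (2), which is fine but only available for free modules), and then \emph{descends} using the injective part of Kopeiko's stabilization theorem: the map ${\Sp}(P\perp A^2)/{\Etrans}_{\Sp}(P\perp A^2)\to{\Sp}(P\perp A^{2l})/{\Etrans}_{\Sp}(P\perp A^{2l})$ is an isomorphism in the stated range, so triviality upstairs forces $\alpha\in{\Etrans}_{\Sp}(P\perp A^2)$. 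This injective-stability descent for the transvection quotient is the substantive content of the proposition, and it is absent from your proposal. To repair your write-up, replace ``free'' by ``stably free,'' handle the $t=0$ case as you did, and add the stabilization-and-descent step for $t>0$.
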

	
	\begin{proof}
		From Corollary \ref{cktheorysp} we have the following:
		\begin{enumerate}
			\item ${\Sp}_{2l}(A) = {\ESp}_{2l}(A)$ for $l\geq d+2$,
			\item 	$P$ is stably free, $i.e.,$ $P \perp A^{2t} \simeq A^{2k+2t},$ for some $t\geq0$.
		\end{enumerate}

		\noindent If $t=0,$ we are done by (\cite{BBR-MR2578513}, Lemma 2.20). Let $\alpha \in {\Sp}(P \perp A^2)$. Then
		\begin{align*}
			\hspace{3cm}\alpha \perp {\I}_{2t-2} &\in {\Sp}(P \perp A^{2t})\\
			&= {\Sp}_{2(k+t)}(A)\\
			&= \Ep_{2(k+t)}(A) \hspace{2cm}\\
			&= {\Etrans}_{\Sp}(P \perp A^{2t})  \hspace{1cm} (\text{By (\cite{BBR-MR2578513}, Lemma 2.20})
		\end{align*}
		
		\noindent From the Stabilization Theorem in \cite{Kop-MR497932}, the natural map  $\Phi_{\Sp(l^+)}: \frac{{\Sp}(P \perp R^2)}{{\Etrans}_{\Sp}(P \perp R^2)} \rightarrow  \frac{{\Sp}(P \oplus R^{2l})}{{\Etrans}_{\Sp}(P \perp R^{2l})}$ is an isomorphism for all $l \geq 1$. Thus  $\alpha \in {\Etrans}_{\Sp}(P \perp R^{2}).$ 
	\end{proof}
	
	On application of local-global theorem for transvection group, the results of Section \ref{s3} can be extended to prove Theorem \ref{t6}:	
	
	\begin{proof}[\textbf{Proof of Theorem \ref{t6}}]
		Let $A=R[L]$ and $\alpha \in {\Sp}(P \perp A^2)$. By Proposition \ref{p4} and descending into the interior (as in Theorem \ref{t1}), we can assume $L$ is a finite rank positive normal $c$-divisible monoid with $\phi(L)$ closed polytope. 
		
		Let $\m \in \max(R)$. Since $R$ is regular, by Corollary 1.4 of \cite{Swan-andersonMR1144038} and Remark \ref{r2}, $P_{\m}$ is a free symplectic $A_{\m}$-module. On application of Theorem \ref{t3},
		$$\alpha_{\m} \in  {\Sp}(A_{\m}^{2k+2}) = {\Sp}_{2k+2}(A_{\m}) =  {\ESp}_{2k+2}(A_{m}).$$
		\noindent Therefore, there exists $r \in R \smallsetminus \m,$ such that $\alpha_{r} \in \Ep_{2k+2}(A_r)$. Since $R$ has a finite cover, we may choose $r_i \in R$ such that $(r_1, \ldots, r_k) =1$ and $\alpha_{r_i} \in \Ep_{2k+2}(A_{r_i})$ for $1 \leq i \leq k$. Choose an affine $L$-submonoid $L'$ such that $\alpha \in {\Sp}_{2k+2}(R[L'])$ satisfying $\alpha_{r_i} \in \Ep_{2k+2}(R_{r_i}[L'])$ for all $i$ and $P$ is a symplectic $R[L']$-module. By Proposition 1.7 and 1.15 of \cite{Gubeladze-ClassicalMR1079964} there exist a simplex $\Delta$ enveloping $\phi(L)$ such that
		$$L' \subset L \subset \mathbb{Q}_+ \otimes \Delta(L) \simeq \mathbb{Q}_+^{r'}.$$
		
		\noindent Here $R[L']$ has an inherited positive $\pz{r'}$-grading with $R[L']_0=R$. From (\cite{BBR-MR2578513}, Lemma 2.20), 
		$$\alpha_{\m} = (\alpha_{r})_{\m}\in  {\ESp}_{2k+2}(R_{\m}[L']) = {\Etrans_{\Sp}(P_{\m})}.$$
		Since ${\KK}_1{\Sp}(R)=0,$ we may assume  $\alpha \in {\Sp}(R[L'], R[L']_+)$. By (\cite{Rab-man}, Theorem 5.4) we have the required.
	\end{proof}
	
	Using Theorem \ref{t7}, we show the corresponding result for transvections:
	
	\begin{theorem}\label{t8}
		Let  $R$ be a regular ring of dimension $d,$ ${\KK}_1{\Sp}(R)=0$ and $L$ be a c-divisible monoid, where $c > 1$.  Let $P$ be a symplectic module of rank $2k$. Then ${\Etrans}(P \perp R[L]^2) = {\Sp}(P \perp R[L]^2)$, if
		\begin{enumerate}[(i)]
			\item $d \leq 1,$ ${\ESp}_{4}(R) = {\Sp}_{4}(R)$  and $k \geq 1$.
			\item R is a geometrically regular ring containing a field and $2k \geq \max\{1, d-1\}$. 
		\end{enumerate}
	\end{theorem}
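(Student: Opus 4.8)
The plan is to mirror the strategy used for Theorem \ref{t6}, but feeding in the sharper unstable ${\KK}_1{\Sp}$ bounds recorded in Theorem \ref{t7} (which in turn came from Examples \ref{r3} and \ref{r6}) in place of the generic bound $k \geq \max\{2,d+1\}$. Set $A = R[L]$ and take $\alpha \in {\Sp}(P \perp A^2)$. First I would reduce $L$ to a finite rank positive normal $c$-divisible monoid with $\phi(L)$ a closed polytope, exactly as in the proof of Theorem \ref{t1}: strip the units via the retraction onto $R[U(L)]$, express $L$ as a direct limit of finite-rank $c$-divisible submonoids with closed $\phi$, then descend into the interior $L_*$ using $\mathbf{H}_{r-1}$-type arguments to enforce normality. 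The localization step is where the hypotheses of Theorem \ref{t8} enter: for $\m \in \Max(R)$, regularity of $R$ together with Corollary 1.4 of \cite{Swan-andersonMR1144038} and Remark \ref{r2} makes $P_\m$ a free symplectic $A_\m$-module, and then the relevant part of Theorem \ref{t7} (case (i) or (ii)) applied to the regular local ring $R_\m$ yields $\alpha_\m \in {\ESp}_{2k+2}(A_\m) = {\Etrans}_{\Sp}(P_\m)$.

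From there the argument is the standard patching: pick $r_i \in R$ with $(r_1,\dots,r_\ell) = 1$ and $\alpha_{r_i} \in {\ESp}_{2k+2}(A_{r_i})$, choose an affine $L$-submonoid $L'$ carrying $\alpha$ and all the $\alpha_{r_i}$, over which $P$ is defined, and use Proposition 1.7 and 1.15 of \cite{Gubeladze-ClassicalMR1079964} to embed $R[L']$ in a polynomial ring so that it acquires a positive $\pz{r'}$-grading with degree-zero part $R$. The triviality of ${\KK}_1{\Sp}(R)$ lets us normalize $\alpha$ to lie in ${\Sp}(R[L'], R[L']_+)$, and then the graded local-global principle for the transvection group --- (\cite{Rab-man}, Theorem 5.4), the same input used in Theorem \ref{t6} --- gives $\alpha \in {\Etrans}_{\Sp}(P \perp R[L']^2) \subseteq {\Etrans}_{\Sp}(P \perp R[L]^2)$.

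The one genuine subtlety, and the step I expect to need the most care, is bookkeeping the rank bounds so that the two cases of Theorem \ref{t7} actually license the reduction to the free-module computation of Proposition \ref{p4} and its ${\Etrans}$-analogue via (\cite{BBR-MR2578513}, Lemma 2.20). In case (i), $d \leq 1$ and ${\ESp}_4(R) = {\Sp}_4(R)$, the stable range is small enough that $k \geq 1$ suffices, but one must check that $P$ is stably free of the right parity and that the stabilization map $\Phi_{\Sp(l^+)}$ of \cite{Kop-MR497932} is an isomorphism at the relevant level --- exactly the mechanism of Proposition \ref{p4} but now with the monoid-algebra bounds from Theorem \ref{t7}(i), so one gets the conclusion possibly only up to a factor of ${\Sp}_4(R)$, which is why the hypothesis ${\ESp}_4(R) = {\Sp}_4(R)$ is imposed to absorb it. In case (ii), $R$ geometrically regular containing a field, the improved injective and surjective bounds of \cite{Basuinjective-MR2578583} push the threshold down to $2k \geq \max\{1,d-1\}$; here I would verify that ${\KK}_1{\Sp}(R(X)) = 0$ (as in Example \ref{r6}) so that $R_\m \in \mathcal{R}_{\lceil k/2\rceil}$ and Theorem \ref{t7}(ii) applies at the localized level, and then confirm that the local-global and patching steps do not require a larger rank than this bound provides.
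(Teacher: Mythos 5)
Your proposal is correct and follows exactly the route the paper intends: the paper gives no written proof of Theorem \ref{t8} beyond the remark that it follows from Theorem \ref{t7}, and the intended argument is precisely to rerun the proof of Theorem \ref{t6} with Theorem \ref{t7} substituted for Theorem \ref{t3} at the localization step, which is what you do. Your observation that the hypothesis ${\ESp}_4(R)={\Sp}_4(R)$ in case (i) is there to absorb the ${\Sp}_4(R)$ factor appearing in Theorem \ref{t7}(i) is exactly the right bookkeeping.
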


	\section{Auxiliary results}
	
	In the hypothesis of Theorem \ref{t1}, we had to enforce the condition $${\ESp}_{2k}(R[X_1, \ldots, X_m, Y_1^{\pm{1}}, \ldots, Y_n^{\pm{1}}]) = {\Sp}_{2k}(R[X_1, \ldots, X_m, Y_1^{\pm{1}}, \ldots, Y_n^{\pm{1}}])$$ for $R \in \mathcal{R}_k$. In order to remedy this, we define a much general class of ring $\mathcal{R}'_n,$ which automatically satisfies this requirement. It can be shown that the conclusion of Theorem \ref{t1} also holds for rings in $\mathcal{R}'_n$.  We say $R \in \mathcal{R}'_n,$ if
	\begin{enumerate}[(i)]
		\item ${\ESp}_{2n}(R) = {\Sp}_{2n}(R)$ and
		
		\item If there exist finite rings  $R=R_1, R_2 \ldots, R_k=R'$ for $k>1,$ such that for $1 < i \leq k$,
		
		$R_i$ is either $R_{i-1}(X)$ or ${(R_{i-1})}_{\m}$ where $\m \in \Max(R_{i-1})$
		Then  ${\ESp}_{2n}(R') = {\Sp}_{2n}(R')$.
	\end{enumerate}
	
	\noindent Then we have the following interesting result:
	
	\begin{proposition}\label{l7}
		Let $R \in \mathcal{R}'_n$ and $s, t \geq 0$. Then 
		$${\Sp}_{2n}(R[X_1, \ldots, X_t, Y_1^{\pm{1}}, \ldots, Y_s^{\pm{1}}]) = {{\ESp}_{2n}}(R[X_1, \ldots, X_t, Y_1^{\pm{1}}, \ldots, Y_s^{\pm{1}}]).$$
	\end{proposition}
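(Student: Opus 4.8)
The plan is to treat the polynomial and the Laurent indeterminates by two separate mechanisms, exactly as in Example~\ref{r3}: the polynomial variables get absorbed in one stroke by the stabilization theorem of Vaserstein--Menal--Fritz, and the Laurent variables are then added one at a time by Kopeiko's Laurent lemma together with an induction.

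First I would note that the hypothesis $R\in\mathcal{R}'_n$ is designed to be exactly what the proof of (\cite{Vaser-Men-Fritz-MR1086811}, Theorem~1.8) consumes: that argument replaces the current base ring, at each step, by either $R(X)$ or a localization $R_{\m}$ at a maximal ideal, and it only needs to know that the symplectic group is generated by elementary symplectic matrices over \emph{every} ring occurring in such a tower. Condition~(ii) in the definition of $\mathcal{R}'_n$ asserts precisely this (for arbitrary, not merely strictly alternating, towers), and condition~(i) supplies the starting equality. Hence Theorem~1.8 of \cite{Vaser-Men-Fritz-MR1086811} applies and gives
$${\Sp}_{2n}(R[X_1,\ldots,X_t])={\ESp}_{2n}(R[X_1,\ldots,X_t])\qquad\text{for all }t\ge 0,$$
which is the proposition in the case $s=0$, uniformly in $t$.

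Next I would induct on the number $s$ of Laurent indeterminates, allowing $t$ to be arbitrary at each stage; the case $s=0$ is the previous paragraph. For the inductive step set $B=R[X_1,\ldots,X_t,Y_1^{\pm1},\ldots,Y_{s-1}^{\pm1}]$. By the inductive hypothesis (with $s-1$ Laurent variables and $t$ polynomial variables) one has ${\Sp}_{2n}(B)={\ESp}_{2n}(B)$, and since $B[Y_s]=R[X_1,\ldots,X_t,Y_s,Y_1^{\pm1},\ldots,Y_{s-1}^{\pm1}]$ has $t+1$ polynomial variables and $s-1$ Laurent variables, the inductive hypothesis also gives ${\Sp}_{2n}(B[Y_s])={\ESp}_{2n}(B[Y_s])$. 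Feeding these two equalities into (\cite{Kop-MR497932}, Lemma~4.1) --- the purely ring-theoretic passage that manufactures elementary decompositions over $B[Y_s^{\pm1}]$ from those over $B$ and over $B[Y_s]$ --- yields ${\Sp}_{2n}(B[Y_s^{\pm1}])={\ESp}_{2n}(B[Y_s^{\pm1}])$, i.e.\ the statement for $s$ Laurent variables and arbitrary $t$. This closes the induction.

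The only point that I expect to require care --- rather than being a genuine obstacle --- is the first step: checking that membership in $\mathcal{R}'_n$ really supplies every intermediate equality ${\Sp}_{2n}={\ESp}_{2n}$ demanded along the tower of localizations and passages to $R(X)$ in the proof of (\cite{Vaser-Men-Fritz-MR1086811}, Theorem~1.8). This is precisely why $\mathcal{R}'_n$ was introduced with unrestricted towers in condition~(ii), replacing the strictly alternating towers of $\mathcal{R}_k$. Once that is in hand, the Laurent induction via \cite{Kop-MR497932} is routine and identical in shape to the one already carried out for regular local rings of dimension $\le 1$ in Example~\ref{r3}.
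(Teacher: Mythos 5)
Your overall architecture (settle the polynomial case first, then adjoin the Laurent variables one at a time via Kopeiko's Lemma 4.1) is the same as the paper's, but the engine you put inside the Laurent inductive step is not what Lemma 4.1 supplies, and this is a genuine gap. Lemma 4.1 of \cite{Kop-MR497932} is a patching statement: for $\alpha \in {\Sp}_{2n}(B[Y_s,Y_s^{-1}])$ it deduces $\alpha \in {\ESp}_{2n}(B[Y_s,Y_s^{-1}])$ from the two inputs $\alpha \in {\ESp}_{2n}(S_1^{-1}B[Y_s,Y_s^{-1}])$ and $\alpha \in {\ESp}_{2n}(S_2^{-1}B[Y_s,Y_s^{-1}])$, where $S_1$ (resp.\ $S_2$) is the set of monics in $Y_s$ (resp.\ $Y_s^{-1}$). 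It does not manufacture the Laurent equality from the pair ${\Sp}_{2n}(B)={\ESp}_{2n}(B)$ and ${\Sp}_{2n}(B[Y_s])={\ESp}_{2n}(B[Y_s])$: passing from the second of these to elementarity over the monic localization $S_1^{-1}B[Y_s,Y_s^{-1}]$ would require surjective stability to survive localization, which it does not in general --- this is exactly the point at which the ${\KK}_0$-term of Bass--Heller--Swan would otherwise vanish for free. To feed Lemma 4.1 one must first prove that $\alpha$ becomes elementary over the two monic localizations, and the paper does this by observing that $\alpha$ already lies in ${\Sp}_{2n}(R(Y_s)[X_1,\ldots,X_t,Y_1^{\pm1},\ldots,Y_{s-1}^{\pm1}])$, that $R(Y_s)\in\mathcal{R}'_n$, and then running the induction on $s+t$ with the \emph{new} base ring $R(Y_s)$ (and symmetrically with $R(Y_s^{-1})$). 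Your induction keeps the base ring $R$ fixed throughout and never invokes the closure of $\mathcal{R}'_n$ under $R\mapsto R(X)$ in the Laurent step; since that closure is the entire reason $\mathcal{R}'_n$ was defined with unrestricted towers, its absence is the tell-tale sign that the step does not go through as written.

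The polynomial case is also weaker than you present it. Theorem 1.8 of \cite{Vaser-Men-Fritz-MR1086811} is invoked in this paper only for regular local rings of dimension at most one (Example \ref{r3}), and the conclusions it yields elsewhere have the form ${\Sp}_{4}(R)\cdot{\ESp}_{2k}$ rather than ${\ESp}_{2k}$ alone (cf.\ Theorem \ref{t7}(i)); your assertion that its proof ``only consumes'' the tower property of $\mathcal{R}'_n$ is a claim about the internals of a cited theorem that neither its statement nor its use here supports. The paper instead argues directly: for $\beta\in{\Sp}_{2n}(R[X_1,\ldots,X_t])$ set $R'=R(X_1,\ldots,X_{t-1})\in\mathcal{R}'_n$, so that $\beta\in{\ESp}_{2n}(R'(X_t))$, hence $\beta_f\in{\ESp}_{2n}(R'[X_t]_f)$ for some monic $f$, and Kopeiko's monic-inversion Theorem 3.10 then gives $\beta\in{\ESp}_{2n}(R'[X_t])$. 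You should replace the appeal to Theorem 1.8 by this $R(X)$/monic-inversion argument, which again uses the $\mathcal{R}'_n$ tower condition in the way it was intended.
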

	
	\begin{proof}
		Let $s=0$. Then we need to show the equality  
		$${\Sp}_{2n}(R[X_1, \ldots, X_t]) = {{\ESp}_{2n}}(R[X_1, \ldots, X_t]).$$ 
		Let $\beta \in {\Sp}_{2n}(R[X_1, \ldots, X_t])$ and $R' = R(X_1, \ldots, X_{t-1})$. Since $R' \in \mathcal{R}_n,$ $\beta \in \Ep_{2n}(R'(X_t)) \cap {\Sp}_{2n}(R'[X_t])$. There exists a monic polynomial $f \in R'[X_t],$ such that $\beta_f \in \Ep_{2n}(R'[X_t]_f)$. By Theorem 3.10 of \cite{Kop-MR497932}, $\beta \in \Ep_{2n}(R'[X_t])$.

		Let $R''=R[X_1, \ldots, X_t, Y_1^{\pm{1}}, \ldots, Y_{s-1}^{\pm{1}}]$ and $S_1 (~S_2, \text{ resp.})$ be the multiplicative subset of $R''[Y_s, Y_s^{-1}]$, consisting of all monics in $Y_s (~Y_s^{-1}, \text{ resp.})$. For the case of Laurent polynomial rings, observe that for $\alpha \in {\Sp}_{2n}(R[X_1, \ldots, X_t, Y_1^{\pm{1}}, \ldots, Y_s^{\pm{1}}])$ we have
		$$\alpha \in {\Sp}_{2n}(R(Y_s)[X_1, \ldots, X_t, Y_1^{\pm{1}}, \ldots, Y_{s-1}^{\pm{1}}]. $$
		Since $R(Y_s) \in \mathcal{R}_n,$ thus by induction on $s+t$ we may conclude 
		$$\alpha \in {\ESp}_{2n}(R(Y_s)[X_1, \ldots, X_t, Y_1^{\pm{1}}, \ldots, Y_{s-1}^{\pm{1}}]) \subseteq {\ESp}_{2n}(S_1^{-1}R''[Y_s, Y_s^{-1}]).$$
		Similarly we may show $\alpha \in {\ESp}_{2n}(R(Y_s^{-1})[X_1, \ldots, X_t, Y_1^{\pm{1}}, \ldots, Y_{s-1}^{\pm{1}}]) \subseteq {\ESp}_{2n}(S_2^{-1}R''[Y_s, Y_s^{-1}]).$
		By Lemma 4.1 of \cite{Kop-MR497932}, we may conclude and say $\alpha \in {\ESp}_{2n}(R[X_1, \ldots, X_t, Y_1^{\pm{1}}, \ldots, Y_{s-1}^{\pm{1}}][Y_s, Y_s^{-1}]).$
	\end{proof}
	
	\noindent Using the above proposition, the reader may show that Theorem \ref{t1} holds for rings in $\mathcal{R}'_n$ as well.
	
	\begin{theorem}\label{t5}
		Let  $R \in \mathcal{R}'_n$ for some $n$ and $L$ be a c-divisible monoid, where $c > 1$. Then $${\ESp}_{2n}(R[L]) = {\Sp}_{2n}(R[L]).$$ 
	\end{theorem}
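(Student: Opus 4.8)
The plan is to obtain Theorem~\ref{t5} as a direct consequence of Theorem~\ref{t1}, by checking that membership in $\mathcal{R}'_n$ automatically supplies both hypotheses needed there. The first step is to record the inclusion $\mathcal{R}'_n \subseteq \mathcal{R}_n$: clause (i) in the two definitions is literally the same condition once $k$ is taken to be $n$, and for clause (ii) one notes that any finite chain $R = R_1, R_2, \dots, R_t = R'$ of the strictly alternating shape appearing in the definition of $\mathcal{R}_n$ is, in particular, a chain each of whose steps is one of the operations $S \rightsquigarrow S(X)$ or $S \rightsquigarrow S_{\m}$ ($\m \in \Max(S)$), hence an admissible chain in the sense of $\mathcal{R}'_n$. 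Therefore the universally quantified conclusion built into the definition of $\mathcal{R}'_n$ entails the one built into the definition of $\mathcal{R}_n$, and so $R \in \mathcal{R}'_n$ forces $R \in \mathcal{R}_n$.

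The second step is to invoke Proposition~\ref{l7} for $R \in \mathcal{R}'_n$, which gives
$${\Sp}_{2n}(R[X_1, \dots, X_t, Y_1^{\pm 1}, \dots, Y_s^{\pm 1}]) = {\ESp}_{2n}(R[X_1, \dots, X_t, Y_1^{\pm 1}, \dots, Y_s^{\pm 1}])$$
for all $s, t \geq 0$. This is precisely the auxiliary Laurent-polynomial equality demanded in the hypothesis of Theorem~\ref{t1}, with the size parameter there equal to $n$. Combining the two steps, Theorem~\ref{t1} applies with $k = n$ and delivers ${\ESp}_{2n}(R[L]) = {\Sp}_{2n}(R[L])$, which is the assertion.

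An equivalent route, matching the remark just before the statement, is to rerun the proof of Theorem~\ref{t1} directly: the only properties of the base ring consumed in that argument are the $\mathcal{R}_k$-condition, the Laurent-polynomial equality, and the reduction to a local base via Proposition~\ref{p2} (feeding Lemma~\ref{l6}); and since prepending a step $S \rightsquigarrow S(X)$ or $S \rightsquigarrow S_{\m}$ to an admissible chain yields another admissible chain, the class $\mathcal{R}'_n$ is closed under both operations, so every intermediate ring stays inside it and Proposition~\ref{l7} furnishes the Laurent-polynomial equality wherever required. Either way I do not anticipate a genuine obstacle: the mathematical content is entirely contained in Proposition~\ref{l7} and Theorem~\ref{t1}, and what remains is bookkeeping --- chiefly keeping the size parameter $2n$ notationally distinct from the number of adjoined (Laurent) variables in Proposition~\ref{l7}, and recording the stability of $\mathcal{R}'_n$ under localization and under the Nagata construction so that the reduction steps of Theorem~\ref{t1} do not leave the class.
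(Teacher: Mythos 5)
Your proposal is correct and matches the paper's intended argument: the paper itself states Theorem \ref{t5} immediately after remarking that Proposition \ref{l7} lets the reader rerun (or directly apply) Theorem \ref{t1} for rings in $\mathcal{R}'_n$. Your explicit verification that $\mathcal{R}'_n \subseteq \mathcal{R}_n$ (since the alternating chains admissible for $\mathcal{R}_n$ form a subfamily of the arbitrary chains quantified over in $\mathcal{R}'_n$) together with Proposition \ref{l7} supplying the Laurent-polynomial hypothesis is exactly the bookkeeping the paper leaves to the reader.
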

	
	We may similarly obtain the below from Theorem \ref{t5}:

	\begin{theorem}
		Let $R \in \mathcal{R}'_t$ and $L$ be a $c$-divisible monoid. Let $P$ be a symplectic $R[L]$-module extended from $R$. If $rank(P)=2t-2,$ then
		$${\Sp}(P \perp R[L]^2) =  {\Etrans}_{\Sp}(P \perp R[L]^2).$$
	\end{theorem}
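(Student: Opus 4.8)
The plan is to follow the Proof of Theorem~\ref{t6} essentially line by line, replacing the two inputs that used regularity of $R$ and the vanishing of ${\KK}_1{\Sp}(R)$ by their $\mathcal{R}'_t$-analogues proved above. Write $P=P_0\otimes_R R[L]$ with $P_0$ a symplectic $R$-module of rank $2t-2$, and set $Q_0:=P_0\perp R^2$, a symplectic $R$-module of rank $2t$ which, since $P$ is extended from $R$ and of corank two, we take to be free of rank $2t$ (this point is discussed at the end); then $P\perp R[L]^2=Q_0\otimes_R R[L]$ is extended from $R$. First I would reduce the monoid. In place of Proposition~\ref{p4} I would use that, for $R\in\mathcal{R}'_t$ and any Laurent polynomial extension $A'$ of $R$, Proposition~\ref{l7} gives $\Sp_{2t}(A')=\ESp_{2t}(A')$, and since $Q_0\otimes_R A'$ is free this equals $\Etrans_{\Sp}\!\big(Q_0\otimes_R A'\big)$ by (\cite{BBR-MR2578513}, Lemma~2.20); running the descent into the interior exactly as in the proof of Theorem~\ref{t1} (the face retractions are $R$-algebra retractions, hence carry the extended structure of $P$ along, and reduce the boundary to instances of the same statement for monoids of strictly smaller rank) then lets me assume $L$ is a positive normal $c$-divisible monoid of finite rank with $\phi(L)$ a closed polytope.

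Next I would localize at $R$. For $\m\in\Max(R)$, every chain of the kind allowed in the definition of $\mathcal{R}'_t$ issuing from $R_\m$ is, because $R_\m=(R)_\m$ with $\m\in\Max(R)$, also such a chain issuing from $R$, and $\Sp_{2t}(R_\m)=\ESp_{2t}(R_\m)$ since $R_\m$ is local; hence $R_\m\in\mathcal{R}'_t$. As $(P_0)_\m$ is free over the local ring $R_\m$, Remark~\ref{r2} gives an isometry $(P\perp R[L]^2)_\m\cong R_\m[L]^{2t}$. Theorem~\ref{t5} applied to $R_\m$ yields $\Sp_{2t}(R_\m[L])=\ESp_{2t}(R_\m[L])$, which by (\cite{BBR-MR2578513}, Lemma~2.20) coincides with $\Etrans_{\Sp}\!\big((P\perp R[L]^2)_\m\big)$. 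Therefore every $\alpha\in\Sp(P\perp R[L]^2)$ has $\alpha_\m\in\Etrans_{\Sp}\!\big((P\perp R[L]^2)_\m\big)$ for all $\m\in\Max(R)$.

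To globalize I would pass to an affine $L$-submonoid $L'$ carrying $\alpha$ and a finite unimodular family witnessing local elementariness; by Propositions~1.7 and~1.15 of \cite{Gubeladze-ClassicalMR1079964} the ring $R[L']$ has a positive $\pz{r'}$-grading with $(R[L'])_0=R$, and $P\perp R[L']^2$ is extended from $Q_0$ along $R\hookrightarrow R[L']$. Using $R\in\mathcal{R}'_t$, i.e. $\Sp_{2t}(R)=\ESp_{2t}(R)$, together with (\cite{BBR-MR2578513}, Lemma~2.20) for the free module $Q_0$, I would multiply $\alpha$ by an elementary transvection extended from $R$ so that $\alpha$ maps to the identity under the augmentation $R[L']\to R$. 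The graded local--global principle for the transvection group — (\cite{Rab-man}, Theorem~5.4), in the graded form of (\cite{Rab-Kun}, Theorem~8.2) — now combines the vanishing of the augmentation with the local information of the previous paragraph to give $\alpha\in\Etrans_{\Sp}(P\perp R[L']^2)\subseteq\Etrans_{\Sp}(P\perp R[L]^2)$, as required.

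The step I expect to be the main obstacle is the handling of the degree-zero part of $P\perp R[L']^2$. Stripping the augmentation from $\alpha$ requires $\Sp(Q_0)=\Etrans_{\Sp}(Q_0)$ over $R$, and $R\in\mathcal{R}'_t$ supplies this only through the chain of equalities $\Etrans_{\Sp}(Q_0)=\ESp_{2t}(R)=\Sp_{2t}(R)$ of (\cite{BBR-MR2578513}, Lemma~2.20); that is, one must know that $Q_0=P_0\perp R^2$ is free of rank $2t$, which is what the hypothesis that $P$ is extended from $R$ with $\rank(P)=2t-2$ is meant to secure, there being in the $\mathcal{R}'_t$ setting no dimension bound with which to run a Bass-type stability argument as in Theorem~\ref{t6}. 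One must also verify that the graded local--global theorem accepts exactly the data produced here — an extended symplectic module, an automorphism with trivial augmentation, and elementariness after localizing at each $\m\in\Max(R)$. Granting these two points, the monoid reduction and the localization argument run formally as in the proofs of Theorems~\ref{t1} and~\ref{t6}.
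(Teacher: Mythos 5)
Your proposal follows exactly the route the paper intends: the theorem is stated there with only the remark that it is ``similarly obtained'' from Theorem~\ref{t5}, i.e.\ by rerunning the proof of Theorem~\ref{t6} with Theorem~\ref{t5} and Proposition~\ref{l7} substituted for Theorem~\ref{t3} and Corollary~\ref{cktheorysp}/Proposition~\ref{p4}, which is precisely what you do, including the correct observation that $R_\m\in\mathcal{R}'_t$ and the reduction of the local modules via Remark~\ref{r2}. The one caveat you flag --- that stripping the augmentation needs $P_0\perp R^2$ to be free over $R$, since without a dimension bound there is no Bass-type cancellation as in Proposition~\ref{p4} --- is a genuine point, but it is equally implicit in the paper's one-line derivation, so your argument is faithful to (and no weaker than) the source.
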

	

	\section{Unstable ${\KK}_1$ for c-divisible monoids}
	
	We discuss the unstable ${\KK}_1$ results that we obtained for the linear case, building on the work in \cite{Gubeladze-ClassicalMR1079964}:
	\newline
	
	In his paper \cite{Gubeladze-ClassicalMR1079964}, Gubeladze showed that if $R$ is a local PID and $n \geq 3$, then for any $c$-divisible monoid $L,$ 
	$${\Sp}_n(R[L]) = {\El_{n}(R[L])}.$$
	\noindent He established this by developing class of rings $\mathbb{E}_{n}$ (much like the $\mathcal{R}_k$ we defined) and showed that any local PID belongs to $\mathbb{E}_{n}$ for $n \geq 3$. We generalize this result and show that for regular rings for large enough values of $n,$ we can establish unstable ${\KK}_1$-results.
	
	\begin{remark}
		Let $R$ be a regular affine algebra over $\mathbb{Q}$. Then, by (\cite{RaoKallen-MR1317126}) we have injective stability of ${\KK}_1(R)$ at the stage $n = \max\{3,d+1\}$. Further, if $R$ is local, then using (\cite{Vorst-regularMR606650}, Theorem 3.3) we may show for $n \geq 3,$ 
		${\Sl}_{n}(R[X_1, \ldots, X_m, Y_1^{\pm{1}}, \ldots, Y_{t}^{\pm{1}}] = {\El}_{n}(R[X_1, \ldots, X_m, Y_1^{\pm{1}}, \ldots, Y_{t}^{\pm{1}}]),$ for all $m ,t \geq 0$.
	\end{remark}
	
	\begin{remark}
		Let $R \in \mathbb{E}_{n}$, for some $n$ where $n \geq \dim(R)+1.$ Then $R \in \mathbb{E}_{k}$ for all $k \geq n$. 
		
	\end{remark}
	
	Using the above remarks and techniques of Example \ref{r5} in conjunction with (\cite{Suslin-MR0472792}, Theorem 6.3), we can show the following:
	
	\begin{proposition}
		Let $R$ be a regular local ring of dimension $d$. Then $R \in \mathbb{E}_{n},$ if
		\begin{enumerate}[(i)]
			\item  $n \geq \max\{3,d+2\}$,
			\item If $R$ is essentially of finite type over $\mathbb{Q}$ and $n \geq \max\{3,d+1\}$.
		\end{enumerate}
	\end{proposition}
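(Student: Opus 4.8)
The plan is to verify the two defining conditions of the class $\mathbb{E}_n$, following the linear counterparts of Examples \ref{r5} and \ref{r6}. Condition (i), namely $\Sl_n(R) = \El_n(R)$, is immediate because $R$ is local and $n \geq 2$. For condition (ii) one must show that every ring $R' = R_t$ produced from $R = R_1$ by a finite chain of the prescribed alternating form satisfies $\Sl_n(R') = \El_n(R')$. Two structural facts organise the argument. First, each step $S \mapsto S(X)$ or $S \mapsto S_{\m}$ preserves regularity and does not raise Krull dimension, since $\dim S(X) = \dim S$ and $\dim S_{\m} \leq \dim S$; hence every $R_i$ is regular of dimension $\leq d$. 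Second, the alternation in the definition of $\mathbb{E}_n$ forces the operation $S \mapsto S(X)$ to be applied only to rings that were obtained in the immediately preceding step as localizations at maximal ideals; consequently every $R_i$ is either local, or of the form $S(X)$ with $(S, \m_S)$ regular local of dimension $\leq d$.

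For the local $R_i$ there is nothing to do. For $R_i = S(X)$ I would run the argument of Example \ref{r5} verbatim: by (\cite{Lam-MR2235330}, Chapter IV, Proposition 1.4) write $S(X) = S'_Y$ with $S' = S[Y]_{(\m_S, Y)}$ regular local and $Y = X^{-1}$; since $S'/YS' \cong S$ is regular local, feed this into the localization exact sequence
\[
\KK_1(S') \to \KK_1(S'_Y) \to \KK_0(S) \to \KK_0(S'),
\]
whose last map vanishes (the resolution $0 \to S' \xrightarrow{Y} S' \to S \to 0$ gives $[S] \mapsto 0$ in $\KK_0(S')$), and compare the units-and-rank parts to conclude $SK_1(S(X)) = 0$. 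As $\dim S(X) = \dim S \leq d$, the injective part of the linear stabilization theorem ((\cite{Suslin-MR0472792}, Theorem 6.3); see also \cite{BassHellerSwan-MR174605}) then gives $\Sl_n(S(X)) = \El_n(S(X))$ for $n \geq d+2$. Thus for $n \geq \max\{3, d+2\}$ every $R_i$, and in particular $R'$, has $\Sl_n = \El_n$, so $R \in \mathbb{E}_n$; alternatively one checks membership only at $n_0 = \max\{3, d+2\}$ and promotes it to all larger $n$ using the stabilization property of the classes $\mathbb{E}_n$ recorded in the Remark above. For part (ii) the only modification is that when $R$ is essentially of finite type over $\mathbb{Q}$ then so is every ring in the chain (both $S \mapsto S[X]$ and localization preserve this), so the rings $S(X)$ that arise are regular, essentially of finite type over $\mathbb{Q}$, with $SK_1 = 0$ and dimension $\leq d$; the improved injective $\KK_1$-stability available for such algebras (\cite{RaoKallen-MR1317126}; cf.\ the first Remark of this section, and \cite{Vorst-regularMR606650} in the local case) lowers the threshold to $n \geq \max\{3, d+1\}$.

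I expect the main difficulty to be bookkeeping rather than conceptual: one must check that the vanishing of $SK_1$ genuinely persists along the whole chain --- this is precisely why the definition of $\mathbb{E}_n$ interleaves the two operations, so that $S \mapsto S(X)$ is never applied to a ring that fails to be local --- and, for part (ii), that the Rao--van der Kallen bound, stated for affine $\mathbb{Q}$-algebras, carries over to the essentially-of-finite-type rings $S(X)$ appearing here, which should follow from a routine direct-limit argument.
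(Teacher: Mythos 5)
Your proposal is correct and takes essentially the same route as the paper, which in fact records no written proof for this proposition beyond the instruction to combine the two preceding remarks, the techniques of Example \ref{r5}, and Suslin's Theorem 6.3. Your argument --- tracking that the alternating chain only ever applies $S\mapsto S(X)$ to regular local rings of dimension $\leq d$, computing $SK_1(S(X))=0$ via the localization sequence exactly as in Example \ref{r5}, and then invoking injective stability (Suslin for part (i), Rao--van der Kallen/Vorst for part (ii)) --- is precisely the intended fleshing-out of that sketch.
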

	
	Then we can prove the following using (\cite{Suslin-MR0472792}) and emulate the techniques we employed in proof of Theorem \ref{t6}:
	
	\begin{theorem}
		Let $R$ be a regular ring of dimension d with ${\KK}_1(R)=0$ and $L$ be a $c$-divisible monoid. Then for any projective $P$ of rank $n$ with $n \geq \max\{2,d+1\},$
		$${\Sll}(P \oplus R[L]) =  {\ETranss}(P \oplus R[L]).$$
		Further, if  $R$ is essentially of finite type over $\mathbb{Q}$, then for $n \geq \max\{2,d\},$ we get ${\Sll}(P \oplus R[L]) =  {\ETranss}(P \oplus R[L])$.
	\end{theorem}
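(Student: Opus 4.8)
The plan is to run, in the linear setting, the proof of Theorem~\ref{t6}, substituting the symplectic inputs ($\Sp$, $\ESp$, $\Etrans_{\Sp}$, Corollary~\ref{cktheorysp}, Theorem~\ref{t3}, Proposition~\ref{p4}) by their linear counterparts ($\Sll$, $\El$, $\ETranss$, Suslin's stabilization for Laurent polynomial rings, the linear analogue of Theorem~\ref{t3}, and a linear analogue of Proposition~\ref{p4}). The first task is to establish that linear analogue of Proposition~\ref{p4}: if $A'=R[X_1,\dots,X_m,Y_1^{\pm1},\dots,Y_n^{\pm1}]$ and $P$ is a projective $A'$-module of rank $n\ge\max\{2,d+1\}$ (resp.\ $n\ge\max\{2,d\}$ when $R$ is essentially of finite type over $\mathbb{Q}$), then $\Sll(P\oplus A')=\ETranss(P\oplus A')$. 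Since $R$ is regular, $P$ is extended from $R$ (\cite{Lindel-MR1322406}, \cite{Swan-andersonMR1144038}); Corollary~7.10 of \cite{Suslin-MR0472792}, together with ${\KK}_1(R)=0$, gives $\Sl_l(A')=\El_l(A')$ for $l\ge d+2$ (and $l\ge d+1$ in the $\mathbb{Q}$-algebra case, by \cite{RaoKallen-MR1317126}, and \cite{Vorst-regularMR606650} for the local passage), which, combined with the stability isomorphism for the linear transvection group and the argument of (\cite{BBR-MR2578513}, Lemma~2.20), yields the asserted equality on $P\oplus A'$.

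With this linear ``Proposition~\ref{p4}'' available, the remaining steps copy the proof of Theorem~\ref{t6} almost verbatim. Using the retractions $R[L]\to R[F_*]$ onto faces and the ``descent into the interior of $L$'' already performed in the proofs of Theorems~\ref{t1} and~\ref{t6}, reduce to $L$ a finite-rank positive normal $c$-divisible monoid with $\phi(L)$ a closed polytope; the only algebraic input these retraction arguments need is the free-monoid case just proved. Then fix $\alpha\in\Sll(P\oplus R[L])$ and $\m\in\Max(R)$. The ring $R_\m$ is regular local with trivial ${\KK}_1$, so it lies in the class $\mathbb{E}_{n+1}$ for $n+1\ge\max\{3,d+2\}$ (resp.\ $n+1\ge\max\{3,d+1\}$ over $\mathbb{Q}$) by the preceding Proposition, and $P_\m$ is free over $R_\m[L]$ by Gubeladze's theorem on projectives over monoid rings (\cite{Swan-andersonMR1144038}), so $P_\m\oplus R_\m[L]\cong R_\m[L]^{n+1}$. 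The linear analogue of Theorem~\ref{t3} then gives $\alpha_\m\in\Sll_{n+1}(R_\m[L])=\El_{n+1}(R_\m[L])$, whence $\alpha_r\in\El_{n+1}(R_r[L])$ for some $r\in R\smallsetminus\m$. Covering $\Spec R$ by finitely many such $r_i$ with $(r_1,\dots,r_k)=1$, and choosing an affine $L$-submonoid $L'$ over which $\alpha$, the module $P$ and each $\alpha_{r_i}$ are already defined, embed $R[L']\hookrightarrow R[X_1,\dots,X_{r'}]$ (\cite{Gubeladze1-bookMR2508056}, Proposition~4.16) to endow $R[L']$ with a positive $\pz{r'}$-grading with degree-zero part $R$, and invoke a graded local-global principle for the linear transvection group (the linear counterpart of \cite{Rab-man}, Theorem~5.4) to obtain $\alpha\in\Sll(R)\cdot\ETranss(R[L'])\subseteq\Sll(R)\cdot\ETranss(R[L])$. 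Since ${\KK}_1(R)=0$ the left factor lies in $\El(R)$, whence $\alpha\in\ETranss(P\oplus R[L])$; this proves the first assertion, and the $\mathbb{Q}$-algebra refinement follows by tracking the improved bounds throughout.

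The main obstacle is the base case. A projective module of rank $n$ need not be stably free, so the reduction to Suslin's bare equality $\Sl_l(A')=\El_l(A')$ must be channelled through the stability \emph{isomorphism} for the linear transvection group on $P\oplus A'$, and one has to verify that Suslin's surjective range ($l\ge d+2$, resp.\ $l\ge d+1$ over $\mathbb{Q}$) matches the transvection range $n\ge\max\{2,d+1\}$ (resp.\ $\max\{2,d\}$) of the statement---this is exactly where ${\KK}_1(R)=0$ is consumed, to absorb the residual $\Sll(R)$-factor, and where the affine-$\mathbb{Q}$-algebra hypothesis buys the sharper bound via \cite{RaoKallen-MR1317126}. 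The geometric descent on $L$ and the patching step, on the other hand, are formally identical to their symplectic predecessors and go through without new ideas.
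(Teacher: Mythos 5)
Your proposal follows exactly the route the paper intends: the paper's ``proof'' of this theorem is the one-line instruction to use Suslin's results from \cite{Suslin-MR0472792} and emulate the proof of Theorem~\ref{t6}, and your argument is precisely that emulation --- a linear analogue of Proposition~\ref{p4} via Suslin's stabilization and the transvection stability isomorphism, descent into the interior of $L$, localization and patching over a finite cover, and the graded local-global principle --- with the bounds $n+1\ge\max\{3,d+2\}$ (resp.\ $n+1\ge\max\{3,d+1\}$ over $\mathbb{Q}$) matching the paper's membership criteria for $\mathbb{E}_{n+1}$. No substantive divergence; your write-up is simply more detailed than the paper's sketch.
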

	
	\it{Acknowledgement:} The second author would like to thank Prof. Manoj Keshari for his valuable help in clarifying their doubts.

	\bibliographystyle{abbrv}
	\bibliography{reference}
\end{document}